\date{}
\def\BState{\State\hskip-\ALG@thistlm}
\newlist{casess}{enumerate}{1}
\setlist[casess]{label=     \textbf{Case} \arabic*:}
\newcommand*{\rom}[1]{\expandafter\@slowromancap\romannumeral #1@}
\patchcmd{\ttlh@hang}{\parindent\z@}{\parindent\z@\leavevmode}{}{}
\patchcmd{\ttlh@hang}{\noindent}{}{}{}
\definecolor{mygreen}{RGB}{28,172,0} 
\definecolor{mylilas}{RGB}{170,55,241}
\newlist{Assumptions}{enumerate}{1}
\setlist[Assumptions]{label=     \textbf{Assumption} \arabic*:}
\newsavebox{\@brx}
\newcommand{\llangle}[1][]{\savebox{\@brx}{\(\m@th{#1\langle}\)}%
  \mathopen{\copy\@brx\kern-0.5\wd\@brx\usebox{\@brx}}}
\newcommand{\rrangle}[1][]{\savebox{\@brx}{\(\m@th{#1\rangle}\)}%
  \mathclose{\copy\@brx\kern-0.5\wd\@brx\usebox{\@brx}}}
\titleformat{\subsection}[runin]
       {\normalfont\bfseries}
       {\thesubsection}
       {0.5em}
       {}
       [.]
\newcommand{\A}{\mathfrak{A}}
\newcommand{\B}{\mathfrak{B}} 
\newcommand{\CC}{\mathbb{C}}
\def\N{\mathbb{N}}
\def\R{\mathbb{R}}
\def\X{\mathcal{X}}
\def\e{{\sf e}}
\def\BofH{\mathbb B(\mathcal H)}
\def\d{{\rm d}}
\def\bu{\bullet}
\def\({\left(}
\def\[{\left[}
\def\){\right)}
\def\]{\right]}
\def\G{{\sf G}}
\def\<{\langle}
\def\>{\rangle}
\providecommand{\norm}[1]{\lVert#1\rVert}
 \newtheorem{thm}{Theorem}[section]
 \newtheorem{cor}[thm]{Corollary}
 \newtheorem{lem}[thm]{Lemma}
 \newtheorem{prop}[thm]{Proposition}
 \theoremstyle{definition}
 \newtheorem{defn}[thm]{Definition}
 \theoremstyle{remark}
 \newtheorem{rem}[thm]{Remark}
 \newtheorem{ex}[thm]{Example}
 \numberwithin{equation}{section}
\numberwithin{equation}{section}
\begin{document}


\title{On the continuity of intertwining operators over generalized convolution algebras}

\author{Felipe I. Flores}

\author{Felipe I. Flores
\footnote{
\textbf{2020 Mathematics Subject Classification:} Primary 43A20, Secondary 47L65, 46H40.
\newline
\textbf{Key Words:} Automatic continuity, bimodule, Fell bundle, polynomial growth, Banach $^*$-algebra, twisted action, spectral synthesis. }
}

\maketitle


\begin{abstract}
Let $\G$ be a locally compact group, $\mathscr C\overset{q}{\to}\G$ a Fell bundle and $\B=L^1(\G\,\vert\,\mathscr C)$ the algebra of integrable cross-sections associated to the bundle. We give conditions that guarantee the automatic continuity of an intertwining operator $\theta:\X_1\to\X_2$, where $\X_1$ is a Banach $\B$-bimodule and $\X_2$ is a weak Banach $\B$-bimodule, in terms of the continuity ideal of $\theta$. We provide examples of algebras where this conditions are met, both in the case of derivations and algebra morphisms. In particular, we show that, if $\G$ is infinite, finitely-generated, has polynomial growth and $\alpha$ is a free (partial) action of $\G$ on the compact space $X$, then every homomorphism of $\ell^1_\alpha(\G,C(X))$ into a Banach algebra is automatically continuous.
\end{abstract}

\section{Introduction}\label{introduction}

Intertwining operators (or more precisely called, generalized intertwining operators) seem to have been introduced first in \cite{BD78} as a common generalization of algebra homomorphisms, derivations and bimodule homomorphisms. Thus they provide a unified framework for the study of automatic continuity of the three mentioned types of maps. The continuity of intertwining operators has been previously studied in \cite{La81,Ru96,Ru98}, while their connections with cohomology and extensions of Banach algebras have been studied in \cite{BDL99}. On the other hand, the automatic continuity of algebra homomorphisms and/or derivations has been studied in \cite{Ru94,Ru96c,Run94,Si74}, among others. Our reference for the general theory of automatic continuity is \cite{Da00}.

If $\B$ is a $C^*$-algebra or the group algebra $L^1(\G)$ of a compactly generated group with polynomial growth, then the continuity of an intertwining operator $\theta$ from $\B$ to a weak Banach $\B$-bimodule can be characterized as the closedness of the continuity ideal $\mathscr I(\theta)$ \cite{Ru96}. However, this ideal is always closed when the codomain is a Banach $\B$-bimodule, thus implying that derivations and $\B$-module maps are automatically continuous. Having this information at hand, it seems natural to ask about the extent to which the same results hold for (twisted) convolution algebras $L^1_{\alpha,\omega}(\G,\A)$, as they generalize both classes of algebras. These algebras maintain enough of the flavor of a group algebra to be approached in a similar fashion but they are also able to exhibit new phenomena. 

In fact, by taking advantage of the author's latest results \cite{Fl24} we are able to handle the general setting given by $L^1$-algebras of Fell bundles, thus providing a very general framework for our results that encompasses both classical crossed-product-type algebras and their generalizations. Our approach makes use of weights on the group, smooth functional calculus and both the ideal and representation theories of $L^1(\G\,\vert\,\mathscr C)$ and so it is highly related to the more classical study of harmonic analysis on groups. Many of our results on $L^1$-algebras could be considered of independent interest for the researchers in this area.

In any case, our main result is the following.
\begin{thm}
    Let $\G$ be a locally compact group that admits a polynomial weight $\nu$, such that $\nu^{-1}\in L^p(\G)$. Suppose that the algebra $\B=L^1(\G\,\vert\,\mathscr C)$ is symmetric and that every closed two-sided ideal $I$ of $\B$ has a bounded left approximate identity. Let $\X$ a weak Banach $\B$-bimodule and $\theta:\B\to\X$ a $\B$-intertwining operator. Then $\theta$ is continuous if and only if $\mathscr I(\theta)$ is closed.
\end{thm}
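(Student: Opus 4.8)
The forward implication is immediate: if $\theta$ is continuous then $\mathscr I(\theta)=\B$, which is certainly closed. So the content lies in the converse, and throughout I would work with the separating space
$$\mathfrak S(\theta)=\{x\in\X : \text{there is }(b_n)\subseteq\B\text{ with }b_n\to 0\text{ and }\theta(b_n)\to x\},$$
recalling that $\mathfrak S(\theta)$ is a closed subspace and that $\theta$ is continuous exactly when $\mathfrak S(\theta)=\{0\}$ (closed graph theorem). The first step is to verify, from the intertwining identities, that $\mathfrak S(\theta)$ is a closed $\B$-subbimodule of $\X$ and that $\mathscr I(\theta)$ is genuinely a two-sided ideal, consisting precisely of those $a\in\B$ for which $b\mapsto a\,\theta(b)$ and $b\mapsto\theta(b)\,a$ are continuous. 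Here one must tread carefully, since $\X$ is only a \emph{weak} bimodule: the actions need not be continuous in the vector variable, and it is the separate continuity in the algebra variable that keeps the stability arguments alive.

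The heart of the matter is to understand where density of $\mathscr I(\theta)$ can fail. To locate the obstruction I would invoke the main boundedness theorem of Bade--Curtis--Dales together with the representation theory of $\B$. Because $\B=L^1(\G\,\vert\,\mathscr C)$ is symmetric, its irreducible $^*$-representations are under good control (this is where the results of \cite{Fl24} enter), and the boundedness theorem forces the discontinuity carried by $\mathfrak S(\theta)$ to concentrate on a finite subset $F$ of the spectrum $\widehat\B$; equivalently, the hull $h(\mathscr I(\theta))$ is finite. The polynomial weight $\nu$ is used at this stage to manufacture, through smooth functional calculus, elements of $\B$ with prescribed spectral behaviour that separate points of $\widehat\B$ and drive the inductive stabilisation of the descending chain $\overline{a_1\cdots a_n\cdot\mathfrak S(\theta)}$.

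Next I would prove that each point of $\widehat\B$ is a set of spectral synthesis, so that the finite set $F$ is again a set of synthesis. This is where the hypotheses that $\nu$ is polynomial and $\nu^{-1}\in L^p(\G)$ do the real work: they place the relevant weighted algebra inside $L^p$ and allow Domar-type constructions to furnish, for each $t\in F$, elements spectrally localised near $t$, yielding synthesis. Combined with the standing assumption that $\mathscr I(\theta)$ is closed, synthesis identifies $\mathscr I(\theta)$ with $k(F)$, the kernel of its finite hull. To finish one rules out $F\neq\varnothing$: over each $t\in F$ the component of $\mathfrak S(\theta)$ attached to the representation at $t$ is finite-dimensional for a group of polynomial growth, and a direct argument there shows it must vanish, forcing $F=\varnothing$ and hence $\mathscr I(\theta)=\B$. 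Since $\B$ is a closed two-sided ideal of itself it carries a bounded left approximate identity, so every null sequence $(b_n)$ factors as $b_n=a\,c_n$ with a fixed $a\in\B=\mathscr I(\theta)$ and $c_n\to 0$; then $c\mapsto\theta(ac)$ is continuous and $\theta(b_n)=\theta(ac_n)\to 0$, giving $\mathfrak S(\theta)=\{0\}$ and the continuity of $\theta$.

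I expect the main obstacle to be the interface between the boundedness/finiteness step and the synthesis step in this generality. Controlling $\mathfrak S(\theta)$ by finitely many representations while only assuming weak continuity of the module action, and verifying that the polynomial weight genuinely produces spectral synthesis for the possibly non-abelian, bundle-theoretic spectrum of $L^1(\G\,\vert\,\mathscr C)$, is considerably more delicate than in the scalar group-algebra case of \cite{Ru96}; the removal of the finite hull $F$ at the very end, which relies on the fine structure of the representations at those points, is where I anticipate the most technical effort.
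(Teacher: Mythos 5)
There is a genuine gap, and it sits exactly at the step where you try to ``rule out $F\neq\varnothing$.'' Your first half (finite hull of $\mathscr I(\theta)$ via Bade--Curtis type boundedness applied through the weighted, locally regular dense subalgebra, then spectral synthesis to identify $\mathscr I(\theta)=k(F)$) is essentially the paper's route, modulo attribution details: the paper gets regularity of the singly generated subalgebras from growth estimates and Shilov's criterion, and gets synthesis not from Domar-type constructions but from the smooth functional calculus of \cite{Fl24} combined with the standing hypothesis that closed two-sided ideals have bounded left approximate identities (Lemma \ref{barnes}, Theorem \ref{synth}). But your endgame is circular: you assert that at each $t\in F$ the ``component'' of $\mathscr S(\theta)$ attached to the representation at $t$ must vanish, forcing $F=\varnothing$ and $\mathscr I(\theta)=\B$. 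Nothing in the hypotheses supports such a direct vanishing argument; indeed $F=\varnothing$ (equivalently $\mathscr I(\theta)=\B$, equivalently $\mathscr S(\theta)=\{0\}$) is precisely the continuity statement being proved, so it can only be obtained a posteriori, not as an intermediate step. A symptom of the problem is that your final factorization only invokes a bounded left approximate identity for $\B$ itself, which makes the hypothesis ``\emph{every} closed two-sided ideal of $\B$ has a bounded left approximate identity'' idle --- yet that hypothesis is exactly what the proof needs, and needs for a proper ideal.

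The correct conclusion, as in Theorem \ref{theprop}, handles nonempty $F$ head-on: if $\mathscr I(\theta)$ is closed, the finite-hull-plus-synthesis argument shows $\mathscr I(\theta)=k(F)$ has \emph{finite codimension} in $\B$; by hypothesis this closed ideal has a bounded left approximate identity; Cohen--Hewitt factorization then writes any null sequence $\{b_n\}\subset\mathscr I(\theta)$ as $b_n=c*d_n$ with $c,d_n\in\mathscr I(\theta)$, $d_n\to 0$, whence $\theta(b_n)=\theta(c*d_n)\to 0$ because $d\mapsto\theta(c*d)$ is continuous by definition of $\mathscr I(\theta)$. This gives continuity of $\theta$ restricted to $\mathscr I(\theta)$, and since a linear map that is continuous on a closed subspace of finite codimension is continuous everywhere, $\theta$ is continuous on $\B$. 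Only now does one conclude $\mathscr S(\theta)=\{0\}$ and hence, trivially, $F=\varnothing$. So your proposal needs its last third replaced by this factorization-on-$k(F)$ argument; as written, the decisive step is unsupported.
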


We also obtained the following results, which are versions of the previously stated theorem, but seem interesting by their own right. We use them to provide examples of algebras where all derivations are automatically continuous (see Corollary \ref{segundo}) and algebras where all homomorphisms (into other Banach algebras) are continuous (see Corollary \ref{tercero}). 

\begin{cor}
    Let $\G$ be a locally compact group that admits a polynomial weight $\nu$, such that $\nu^{-1}\in L^p(\G)$. Suppose that the algebra $\B=L^1(\G\,\vert\,\mathscr C)$ is symmetric and that every closed two-sided ideal $I$ of $\B$ has a bounded left approximate identity. Let $\X$ a Banach $\B$-bimodule and $\theta:\B\to\X$ a $\B$-intertwining operator. Then $\theta$ is continuous. In particular, all derivations of $\B$ into Banach $\B$-bimodules are continuous.
\end{cor}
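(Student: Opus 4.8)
The plan is to obtain the corollary as a direct consequence of the preceding theorem, the only extra ingredient being that the continuity ideal is automatically closed once the codomain is an honest Banach $\B$-bimodule. Since every Banach $\B$-bimodule is in particular a weak Banach $\B$-bimodule, the hypotheses on $\G$ and $\B$ let me apply the theorem verbatim to the pair $(\X,\theta)$; thus it is enough to prove that $\mathscr{I}(\theta)$ is closed, and the theorem then delivers continuity of $\theta$. Accordingly, I would spend the whole proof on the closedness of $\mathscr{I}(\theta)$.

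To this end I would analyse the separating space $\mathfrak{S}(\theta)=\{\xi\in\X:\exists\, b_n\to 0\text{ in }\B\text{ with }\theta(b_n)\to\xi\}$. The first step is to record that $\mathfrak{S}(\theta)$ is a closed $\B$-sub-bimodule of $\X$: given $b\in\B$ and a sequence $b_n\to 0$ with $\theta(b_n)\to\xi$, continuity of the left action $x\mapsto b\cdot x$ (valid because $\X$ is a genuine Banach bimodule) gives $b\cdot\theta(b_n)\to b\cdot\xi$, while the intertwining identity shows $\theta(bb_n)-b\cdot\theta(b_n)\to 0$; as $bb_n\to 0$ this places $b\cdot\xi$ in $\mathfrak{S}(\theta)$, and symmetrically on the right. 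The second step is the standard stability property of separating spaces: for fixed $b$, the continuity of $x\mapsto b\cdot x$ squeezes the separating space of $c\mapsto b\cdot\theta(c)$ between $b\cdot\mathfrak{S}(\theta)$ and its closure, so that, in view of the intertwining identity, $c\mapsto\theta(bc)$ is continuous exactly when $b\cdot\mathfrak{S}(\theta)=\{0\}$ (and dually for $c\mapsto\theta(cb)$). This identifies the continuity ideal with the annihilator of the separating space,
$$\mathscr{I}(\theta)=\operatorname{Ann}\bigl(\mathfrak{S}(\theta)\bigr)=\{b\in\B:\,b\cdot\mathfrak{S}(\theta)=\{0\}\text{ and }\mathfrak{S}(\theta)\cdot b=\{0\}\}.$$

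The closedness is now immediate: for each fixed $\xi\in\X$ the maps $b\mapsto b\cdot\xi$ and $b\mapsto\xi\cdot b$ are continuous (this already holds in a weak Banach bimodule), so $\operatorname{Ann}(\mathfrak{S}(\theta))$ is an intersection of kernels of continuous linear maps and hence closed. By the theorem, $\theta$ is continuous. For the final assertion I would note that every derivation $D:\B\to\X$ is an intertwining operator: the Leibniz rule gives $D(bc)-b\cdot D(c)=D(b)\cdot c$ and $D(cb)-D(c)\cdot b=c\cdot D(b)$, whose right-hand sides are continuous in $c$ because the actions of $\B$ on the fixed elements $D(b)\in\X$ are continuous; the part already proved then applies.

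I expect the one genuinely load-bearing point to be the identification $\mathscr{I}(\theta)=\operatorname{Ann}(\mathfrak{S}(\theta))$, that is, the stability lemma that turns the continuity of $c\mapsto b\cdot\theta(c)$ into the algebraic condition $b\cdot\mathfrak{S}(\theta)=\{0\}$. This is precisely the step that consumes the hypothesis that $\X$ is a Banach, and not merely a weak Banach, bimodule, through the continuity of $x\mapsto b\cdot x$; for a merely weak bimodule this fails and $\mathscr{I}(\theta)$ need not be closed, which is exactly why the closedness appears as a genuine hypothesis in the theorem rather than as an automatic feature.
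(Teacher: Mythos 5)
Your overall route is the paper's: deduce the corollary from Theorem \ref{main1} by showing that $\mathscr I(\theta)$ is automatically closed when the codomain is a genuine Banach $\B$-bimodule, and your expansion of that one-line argument (separating space, stability lemma, annihilator description of the continuity ideal) is the standard one. Under the hypotheses of the corollary every assertion you use is true, so the argument does prove the corollary.

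However, you have interchanged the two separate-continuity properties, and the misattribution sits exactly at the step you yourself call load-bearing. In the paper's definition, a \emph{weak} Banach $\B$-bimodule provides, for each fixed $b\in\B$, continuity of $\xi\mapsto b\xi$ and $\xi\mapsto \xi b$ (continuity in the \emph{module} variable); what it does \emph{not} provide is continuity of $b\mapsto b\xi$ and $b\mapsto \xi b$ for fixed $\xi$ (the \emph{algebra} variable), which is precisely what joint continuity of a Banach bimodule adds. Consequently: (a) your Step 2, the stability-lemma identification of $\mathscr I(\theta)$ with the annihilator of $\mathscr S(\theta)$, uses only module-variable continuity of the actions on $\X$, so it is valid for every weak Banach bimodule and does not consume the Banach hypothesis — indeed the paper takes the annihilator as the \emph{definition} of $\mathscr I(\theta)$ and records this coincidence in the weak setting; (b) your Step 3 is where the Banach hypothesis is consumed, and the parenthetical claim that continuity of $b\mapsto b\cdot\xi$ ``already holds in a weak Banach bimodule'' is false. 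If it were true, every continuity ideal would be closed, and Theorem \ref{main1} would then make every intertwining operator into a weak Banach bimodule continuous — in particular every homomorphism of $\B$ into a Banach algebra, via Example \ref{ex-inter}(ii) — contradicting Corollary \ref{noncont} and the paper's explicit remark that $\mathscr I(\theta)$ need not be closed unless $\X_2$ is a Banach bimodule. Example \ref{ex-inter}(ii) is the counterexample to keep in mind: for a possibly discontinuous homomorphism $\theta:\B\to\A$, the map $a\mapsto\theta(b)a$ is continuous for each $b$, but $b\mapsto\theta(b)a$ need not be. Your proof survives because in the corollary $\X$ is a genuine Banach bimodule, so $b\mapsto b\xi$ is continuous by joint continuity and the annihilator is closed; but that hypothesis, not the weak structure, is the correct citation, and the diagnosis in your closing paragraph should be inverted: it is the closedness of the annihilator, not the identification $\mathscr I(\theta)=\{b\in\B\mid b\mathscr S(\theta)=\mathscr S(\theta)b=\{0\}\}$, that fails in the merely weak setting.
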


\begin{thm}
    Let $\G$ be a discrete group that admits a polynomial weight $\nu$, such that $\nu^{-1}\in \ell^p(\G)$. Suppose that the algebra $\B=\ell^1(\G\,\vert\,\mathscr C)$ is unital, symmetric and $C^*(\G\,\vert\,\mathscr C)$ has no proper closed two-sided ideals with finite codimension. Let $\X_1$ a Banach $\B$-bimodule, $\X_2$ a weak Banach $\B$-bimodule and $\theta:\X_1\to\X_2$ a $\B$-intertwining operator. Then $\theta$ is continuous. In particular, all algebra homomorphisms with domain $\B$ are automatically continuous.
\end{thm}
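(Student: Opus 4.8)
The plan is to use the closed-graph theorem in the form: $\theta$ is continuous if and only if its separating space
$\mathfrak S(\theta)=\{y\in\X_2 : \exists\, x_n\to 0 \text{ in }\X_1,\ \theta(x_n)\to y\}$
vanishes. First I would record the two structural facts that make $\mathfrak S(\theta)$ tractable. Since $\theta$ is a $\B$-intertwining operator, $\mathfrak S(\theta)$ is a closed $\B$-subbimodule of $\X_2$; moreover, although $\X_2$ is only a weak Banach bimodule, one expects the restricted action $\B\times\mathfrak S(\theta)\to\mathfrak S(\theta)$ to be jointly continuous, so that $\mathfrak S(\theta)$ is in fact a genuine Banach $\B$-bimodule. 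The continuity ideal then admits the description $\mathscr I(\theta)=\{b\in\B : b\cdot\mathfrak S(\theta)=\mathfrak S(\theta)\cdot b=\{0\}\}$, which is a closed two-sided ideal because in a weak Banach bimodule each map $b\mapsto b\cdot y$ and $b\mapsto y\cdot b$ is continuous. Because $\B$ is unital and $\X_2$ is a unital module, $1_\B\cdot\mathfrak S(\theta)=\mathfrak S(\theta)$, so the whole theorem reduces to the single assertion $1_\B\in\mathscr I(\theta)$, i.e. $\mathscr I(\theta)=\B$.

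The argument then rests on two pillars. The first, and the main obstacle, is to prove that $\mathscr I(\theta)$ has finite codimension in $\B$. Here I would localise the problem to the domain $\B$: for each $x\in\X_1$ the map $\theta_x:\B\to\X_2$, $\theta_x(b)=\theta(b\cdot x)$, is again a $\B$-intertwining operator with $\mathfrak S(\theta_x)\subseteq\mathfrak S(\theta)$, and unitality gives $\theta(x)=\theta_x(1_\B)$, so the family $\{\theta_x\}$ recaptures $\theta$ and lets one bring the domain-$\B$ analysis to bear. To these maps out of $\B$ one applies the machinery developed for Theorem 1: a Sinclair-type stability lemma shows that the descending chain $\overline{b_1\cdots b_n\cdot\mathfrak S(\theta)}$ stabilises, and, combined with the smooth functional calculus and the spectral-synthesis estimates afforded by the polynomial weight $\nu$ with $\nu^{-1}\in\ell^p(\G)$, this should force the hull of $\mathscr I(\theta)$ to be finite and $\mathscr I(\theta)$ to be closed of finite codimension. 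This is the step where the analytic heart of the paper is used, and where I expect the real difficulty to lie.

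The second pillar is a transfer principle for finite-codimensional ideals. Since $\B=\ell^1(\G\,\vert\,\mathscr C)$ is symmetric, every finite-dimensional quotient of $\B$ is semisimple and arises from a $*$-representation, so a proper closed two-sided ideal of finite codimension in $\B$ would yield, upon completion, a proper closed two-sided ideal of finite codimension in $C^*(\G\,\vert\,\mathscr C)$; the polynomial-weight regularity ensures that the finite hull is a set of spectral synthesis, so that the $\ell^1$- and $C^*$-ideals with that hull coincide. By hypothesis $C^*(\G\,\vert\,\mathscr C)$ has no such proper ideal, whence $\mathscr I(\theta)$, being closed of finite codimension by the first pillar, cannot be proper: $\mathscr I(\theta)=\B$. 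Therefore $\mathfrak S(\theta)=1_\B\cdot\mathfrak S(\theta)=\{0\}$ and $\theta$ is continuous.

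Finally, for the last assertion I would specialise to an algebra homomorphism $\phi:\B\to\A$ into a Banach algebra $\A$: its separating space is a closed two-sided ideal of $\overline{\phi(\B)}$ on which left and right multiplication by the $\phi(b)$ are well defined, so the same annihilator $\mathscr I(\phi)$ is a closed two-sided ideal of $\B$, and the identical two-pillar argument gives $\mathscr I(\phi)=\B$ and hence the continuity of $\phi$. The two places that require the most care, and which I would isolate as lemmas before assembling the proof, are the joint continuity of the $\B$-action on $\mathfrak S(\theta)$ (upgrading the weak bimodule to a Banach bimodule) and the finite-codimension statement of the first pillar.
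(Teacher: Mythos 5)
There is a genuine gap, and it sits at the foundation of your argument. You assert that in a weak Banach $\B$-bimodule each map $b\mapsto b\cdot y$ and $b\mapsto y\cdot b$ is continuous, and you use this twice: to upgrade $\mathscr S(\theta)$ to a genuine Banach $\B$-bimodule, and to conclude that $\mathscr I(\theta)$ is automatically closed. This reverses the definition: a weak Banach bimodule only requires continuity of $\xi\mapsto b\xi$ and $\xi\mapsto \xi b$ for each \emph{fixed} $b\in\B$, i.e.\ continuity in the module variable; nothing is assumed in the algebra variable, and the paper explicitly warns that $\mathscr I(\theta)$ need not be closed unless $\X_2$ is a (strong) Banach bimodule. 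The point is not pedantic: if your claim were true, automatic continuity would be nearly trivial. A homomorphism $\theta$ from a Banach algebra $\B_0$ into a Banach algebra $\A$ makes $\A$ a weak Banach $\B_0$-bimodule via $b\cdot a=\theta(b)a$, $a\cdot b=a\theta(b)$ (Example \ref{ex-inter}); for a discontinuous $\theta$ (these exist, e.g.\ on $C(X)$ under CH) the maps $b\mapsto\theta(b)a$ cannot all be continuous (take $a=1$ when $\A$ is unital), and neither the joint-continuity claim on $\mathscr S(\theta)$ nor the closedness of $\mathscr I(\theta)$ can hold, since closedness of the continuity ideal is, in this circle of ideas, \emph{equivalent} to continuity of $\theta$ (Theorem \ref{main1}). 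So your first pillar is circular as stated, and its actual analytic content (the Sinclair-type stability argument that ``should force'' finite codimension and closedness) is announced rather than carried out. Your second pillar also leans on the unproved claim that every finite-dimensional quotient of $\B$ is semisimple; this is delicate precisely because $\mathscr I(\theta)$ need not be a $^*$-ideal, which is why the paper passes to $\mathscr I(\theta)_*$ and works in ${\rm Prim}_*\B$ in Theorem \ref{thm1}.

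The paper's proof avoids closedness of $\mathscr I(\theta)$ altogether: Theorem \ref{main3} is an instance of Corollary \ref{theprop-simple}. Its hypotheses are verified thus: the dense weighted subalgebra $\mathfrak E=\ell^{1,\nu}(\G\,\vert\,\mathscr C)\cap\ell^\infty(\G\,\vert\,\mathscr C)$ has the property that each $\mathfrak E(\Phi)$, $\Phi=\Phi^*$, is a regular Banach function algebra (Proposition \ref{regularsub}, via the growth estimate for $u(n\Phi)$ and Shilov's criterion); this feeds the Bade--Curtis theorem (Theorem \ref{abnfunalg}) and Theorem \ref{thm1}, which crucially use that the \emph{domain} $\X_1$ is a Banach bimodule --- never that $\mathscr S(\theta)$ is one; and symmetry gives the Wiener property. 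Then, repeating the first part of the proof of Theorem \ref{theprop}, if $\overline{\mathscr I(\theta)}^{\norm{\cdot}_\B}$ were proper, $h\big(\mathscr I(\theta)\big)$ would be a finite set of finite-codimensional ideals in ${\rm Prim}_*\B$, which Proposition \ref{simple} (symmetry plus unitality give inverse-closedness of $\B$ in $C^*(\G\,\vert\,\mathscr C)$, which by hypothesis has no proper closed finite-codimensional two-sided ideals) rules out. Hence $\mathscr I(\theta)$ is \emph{dense}; since $\B$ is unital, a dense two-sided ideal contains an invertible element, so $\mathscr I(\theta)=\B$ and $\theta$ is continuous. If you want to salvage your outline, replace ``closed of finite codimension'' by ``dense'' in pillar one and run the hull argument on the closure of $\mathscr I(\theta)$; unitality then finishes the proof exactly as above, with no appeal to any Banach-bimodule structure on $\mathscr S(\theta)$.
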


The organization of the article is as follows. Section \ref{prem} contains preliminaries. It is basically used to compile basic definitions from Banach algebra theory and to fix some notation. Section \ref{AC} contains general results from the theory of automatic continuity. Most of them are taken from pre-existing literature, as explained along the section itself. The results in \cite{Ru96} originally stated for group algebras are put into general form, so they can be applied (in particular) to the algebras of our interest. On Section \ref{mainsec} we study the $L^1$-algebras of Fell bundlea in full generality, with the purpose of applying them the results obtained in the previous section. This study involves weighted subalgebras, $^*$-regularity, the use of the smooth functional calculus developed in \cite{Fl24} and spectral synthesis. In this section we prove the main results as stated above and provide the examples where they apply. We also prove some interesting results like Corollary \ref{noncont}, providing more examples where the Albrecht-Dales conjecture holds. Finally, Section \ref{COMPL} is an appendix where we verify that finite-codimensional closed two-sided ideals in twisted group algebras have left bounded approximate identities, as long as the group is amenable. This is useful in providing more examples for our main results, but it could easily regarded as of independent interest.

\section{Preliminaries}\label{prem}

As mentioned before, the purpose of this section is to fix notation and terminology. If $\B$ is a Banach algebra, $\B(b_1,\ldots, b_n)$ denotes the closed subalgebra of $\B$ generated by the elements $b_1,\ldots, b_n\in\B$. The set of invertible elements in $\B$ is denoted by ${\rm Inv}(\B)$. If $\B$ has an involution, $\B_{\rm sa}$ denotes the set of self-adjoint elements in $\B$, that is, of all $b\in\B$ such that $b^*=b$. A Banach $^*$-algebra admiting a $C^*$-norm is called an $A^*$-algebra. 

\begin{defn}
    Let $\B$ be a Banach $^*$-algebra. If $\B$ is unital, we set $\widetilde{\B}=\B$. Otherwise, $\widetilde{\B}=\B\oplus \CC$ is the smallest unitization of $\B$, endowed with the norm $\norm{b+r1}_{\widetilde{\B}}=\norm{b}_{{\B}}+|r|$.
\end{defn}

\begin{defn} \label{symmetric}
A Banach $^*$-algebra $\mathfrak B$ is called {\it symmetric} if the spectrum of $b^*b$ is positive for every $b\in\mathfrak B$ (this happens if and only if the spectrum of any self-adjoint element is real).
\end{defn}

If $\B$ is a commutative Banach algebra with spectrum $\Delta_\B$, then $\hat b\in C_0(\Delta_\B)$ denotes the Gelfand transform of $b\in \B$. If the Gelfand transform is injective, $\B$ is called a Banach function algebra. 

\begin{defn}
    Let $\B$ be a Banach function algebra with spectrum $\Delta_\B$. $\B$ is called {\it regular} if for every closed set $X\subset \Delta_\B$ and every point $\omega\in \Delta_\B\setminus X$, there exists an element $b\in \B$ such that $\hat b(\varphi)=0$ for all $\varphi\in X$ and $\hat b(\omega)\not=0$.
\end{defn}

\begin{rem}\label{regsym}
    It is a result of Barnes \cite[Lemma 4.1]{Ba81} that regular Banach function algebras with involution are symmetric.
\end{rem}

\begin{defn}
    An $A^*$-algebra $\B$ is called \emph{locally regular} if there is a subset $R\subset \B_{\rm sa}$, dense in $\B_{\rm sa}$ and such that $\B(b)$ is regular, for all $b\in R$.
\end{defn}

If $\B$ is a Banach algebra, then the spaces ${\rm Prim}\B$ and ${\rm Prim}_*\B$ denote, respectively, the space of primitive ideals of $\B$ and the space of kernels of topologically irreducible
$^*$-representations of $\B$, both equipped with the Jacobson topology. We recall that for a subset $S\subset \B$, its hull (with respect to ${\rm Prim}_*\B$) corresponds to $$h(S)=\{I\in {\rm Prim}_*\B\mid S\subset I\},$$ while the kernel of a subset $C\subset {\rm Prim}_*\B$ is $$k(C)=\bigcap_{I\in C} I.$$ Similar formulas hold for the hull and kernel with respect to ${\rm Prim}\B$. 

We now consider the following property, which is intended as an abstract generalization of Wiener's tauberian theorem. It could be regarded in some sense as the existence of enough topologically irreducible
$^*$-representations of $\B$.

\begin{defn}
    Let $\B$ be a Banach $^*$-algebra. We say that $\B$ has the Wiener property $(W)$ if for every proper closed two-sided ideal $I\subset\B$, there exists a topologically irreducible $^*$-representation $\Pi:\B\to\BofH$, such that $I \subset {\rm ker}\,\Pi$. 
\end{defn}

We finalize this section with a property of the topological space ${\rm Prim}_*\B$ that will become handy soon.

\begin{defn}
    A closed subset $C\subset {\rm Prim}_*\B$ will be called a \emph{set of synthesis} if $k(C)$ is the unique closed two-sided ideal $I\subset \B$ such that $h(I)=C$.
\end{defn}

\section{Automatic continuity}\label{AC}

Let $\B$ be Banach algebra. A Banach space $\mathcal X$ which is also a $\B$-bimodule is called a \emph{Banach $\B$-bimodule} if the maps $$\B\times \mathcal X\ni(b,\xi)\mapsto b\xi \in\mathcal X \quad\text{ and }\quad \X\times\B\ni(\xi,b)\mapsto \xi b \in\mathcal X$$ are jointly continuous. If we only have the continuity of the maps $$\mathcal X\ni\xi\mapsto b\xi \in\mathcal X\quad\text{ and }\quad\mathcal X\ni\xi\mapsto \xi b \in\mathcal X$$ for each $b\in\B$, then $\mathcal X$ is called a \emph{weak Banach $\B$-bimodule}.

\begin{defn}
    Let $\B$ be a Banach algebra and $\X_1,\X_2$ be weak Banach $\B$-bimodule. A linear map $\theta:\X_1\to\X_2$ is called a \emph{$\B$-intertwining operator} if for each $b\in\B$, the maps $$\X_1\ni \xi\mapsto \theta(b\xi)-b\theta(\xi)\in \X_2\quad\text{ and }\quad \X_1\ni \xi\mapsto \theta(\xi b)-\theta(\xi)b\in \X_2$$ are continuous.
\end{defn}

We mentioned in the introduction that intertwining operators generalize algebra homomorphisms, derivations and bimodule homomorphisms. We will make this precise in the following example.
\begin{ex}\label{ex-inter}\begin{enumerate}
        \item[(i)] Every $\B$-bimodule homomorphism between weak Banach $\B$-bimodules is a $\B$-intertwining operator.
        \item[(ii)] If $\theta:\B\to\A$ is an algebra homomorphism, then $\A$ can be made into a weak Banach $\B$-bimodules with the actions $$ab=a\theta(b)\quad\text{ and }\quad ba=\theta(b)a, \quad\text{for } a\in \A, b\in \B.$$ With respect to this actions, $\theta$ is a $\B$-intertwining operator.
        \item[(iii)] Let $\X$ a weak Banach $\B$-bimodule. A derivation is a linear map $D:\B\to \X$ satisfying $$D(ab)=D(a)b+aD(b).$$ Every derivation is a $\B$-intertwining operator. 
    \end{enumerate}
\end{ex}

If $\X_1,\X_2$ are Banach spaces and $\theta:\X_1\to\X_2$ is a linear operator, then $$\mathscr S(\theta)=\{\eta\in\X_2\mid \exists \{\xi_n\}_{n\in \N}\subset \X_1 \text{ such that } \xi_n\to 0 \text{ and } \theta(\xi_n)\to \eta\}$$ is the \emph{separating space} of $\theta$. $\mathscr S(\theta)$ is closed and reduces to $\{0\}$ if and only if $\theta$ is continuous. If $\X_1,\X_2$ are weak Banach $\B$-bimodules and $\theta$ is $\B$-intertwining, $\mathscr S(\theta)$ is a sub-bimodule of $\X_2$.

\begin{defn}
    Let $\B$ be a Banach algebra, and $\theta:\X_1\to\X_2$ a $\B$-intertwining operator between weak Banach $\B$-bimodules. Then $$\mathscr I(\theta)=\{b\in\B\mid b\mathscr S(\theta)=\mathscr S(\theta)b=\{0\}\}$$ is the \emph{continuity ideal} of $\theta$.
\end{defn}

In fact, $\mathscr I(\theta)$ coincides with the set of $b\in\B$ such that the maps $$\X_1\ni \xi\mapsto \theta(b\xi)\in \X_2 \quad\text{and}\quad \X_1\ni \xi\mapsto \theta(\xi b)\in \X_2$$ are continuous. Note that $\mathscr I(\theta)$ is not necessarily closed, unless $\X_2$ is a Banach $\B$-bimodule.

In what follows, our strategy will be to provide assumptions that ensure that $\overline{\mathscr I(\theta)}^{\norm{\cdot}_{\B}}$ has finite codimension in $\B$. Our strategy is local and therefore we will use that this holds for regular Banach function algebras, as the following theorem (taken from \cite{BaCu60}) shows.

\begin{thm}\label{abnfunalg}
    Let $\A$ be a regular Banach function algebra, $\X_1$ a Banach $\A$-bimodule, $\X_2$ a weak Banach $\A$-bimodule and $\theta:\X_1\to\X_2$ an $\A$-intertwining operator. Then $h\big(\mathscr I(\theta)\big)$ is a finite subset of $\Delta_\A$.
\end{thm}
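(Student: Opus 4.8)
The plan is to run the classical Bade--Curtis argument, reducing the finiteness of $h\big(\mathscr I(\theta)\big)$ to a statement about sequences of ``orthogonal'' elements of $\A$ lying outside the continuity ideal. Write $\mathscr S=\mathscr S(\theta)$ for the separating space, which is a closed sub-bimodule of $\X_2$. Recall that $\varphi\in\Delta_\A$ lies in $h\big(\mathscr I(\theta)\big)$ precisely when $\hat b(\varphi)=0$ for every $b\in\mathscr I(\theta)$; equivalently, any $b$ with $\hat b(\varphi)\neq 0$ must satisfy $b\mathscr S\neq\{0\}$ or $\mathscr S b\neq\{0\}$. I would argue by contradiction: assuming $h\big(\mathscr I(\theta)\big)$ is infinite, I would extract a sequence of distinct points $(\varphi_n)_{n\geq1}\subset h\big(\mathscr I(\theta)\big)$.

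The first step is to manufacture orthogonal localizing elements. Since $\Delta_\A$ is locally compact Hausdorff, after passing to a subsequence I can choose open neighbourhoods $U_n\ni\varphi_n$ with pairwise disjoint closures. Regularity of $\A$, applied to the closed set $\Delta_\A\setminus U_n$ and the point $\varphi_n$, then produces $a_n\in\A$ with $\hat a_n(\varphi_n)=1$ and $\hat a_n\equiv 0$ on $\Delta_\A\setminus U_n$, so that $\operatorname{supp}\hat a_n\subseteq\overline{U_n}$. Because the closures are disjoint, $\hat a_n\hat a_m\equiv0$ for $n\neq m$, and since $\A$ is a Banach function algebra (hence semisimple) this upgrades to genuine orthogonality $a_na_m=0$ for $n\neq m$. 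Moreover $\hat a_n(\varphi_n)\neq0$ together with $\varphi_n\in h\big(\mathscr I(\theta)\big)$ forces $a_n\notin\mathscr I(\theta)$; passing to a further subsequence I may assume $a_n\mathscr S\neq\{0\}$ for every $n$, the case $\mathscr S a_n\neq\{0\}$ being symmetric.

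The crux is then to contradict this via the main boundedness theorem of Bade and Curtis \cite{BaCu60} (see also \cite{Da00}). Here the asymmetry of the hypotheses is used: because $\X_1$ is a genuine Banach $\A$-bimodule, the multiplications $\xi\mapsto a_n\xi$ are uniformly controlled, while on $\X_2$ it suffices that each $\eta\mapsto a_n\eta$ be continuous with $a_n\mathscr S\subseteq\mathscr S$, and the intertwining identity gives that $\xi\mapsto\theta(a_n\xi)-a_n\theta(\xi)$ is continuous for each $n$. For a sequence with $a_na_m=0$ $(n\neq m)$, the boundedness theorem yields an integer $N$ with $a_n\mathscr S=\{0\}$ for all $n>N$, that is, $\xi\mapsto\theta(a_n\xi)$ is continuous for large $n$. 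This contradicts $a_n\mathscr S\neq\{0\}$ for all $n$, and the contradiction establishes that $h\big(\mathscr I(\theta)\big)$ is finite.

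I expect the genuinely hard part to be this boundedness step: it is where one really exploits the orthogonality $a_na_m=0$, the joint continuity of the action on $\X_1$, and the closedness of $\mathscr S$, through the gliding-hump and Baire-category mechanism underlying the main boundedness theorem. By comparison, the construction of the $a_n$ is routine but requires care at two points, namely arranging pairwise disjoint closures for a sequence of distinct characters (a point-set fact about locally compact Hausdorff spaces, handled by passing to a subsequence) and invoking semisimplicity to convert the vanishing of Gelfand transforms into the actual vanishing of products.
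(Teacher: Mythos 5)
The first thing to note is that the paper does not prove Theorem \ref{abnfunalg} at all: it is quoted with a citation to \cite{BaCu60}, so the only benchmark is the classical Bade--Curtis argument, and your reconstruction follows exactly that route (contradiction, disjointly supported localizing elements from regularity and the Hausdorff property of $\Delta_\A$, semisimplicity to upgrade $\hat a_n\hat a_m\equiv 0$ to $a_na_m=0$, and then the main boundedness theorem). The topological extraction of neighbourhoods with disjoint closures and the orthogonality step are handled correctly, and identifying the main boundedness theorem as the engine is the right call.

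There is, however, one step that fails as literally written: the claim that for the orthogonal sequence $(a_n)$ ``the boundedness theorem yields an integer $N$ with $a_n\mathscr S=\{0\}$ for all $n>N$.'' The main boundedness theorem only speaks about the \emph{diagonal} compositions $T_n\theta R_n$ of the operators fed into it; it says nothing about $\theta R_n$ alone. With the choice that makes the off-diagonal hypothesis hold --- $R_k$ left multiplication by $a_k$ on $\X_1$ and $T_j$ left multiplication by $a_j$ on $\X_2$, so that for $j\neq k$ one has $a_j\theta(a_k\xi)=a_ja_k\theta(\xi)+a_j\big(\theta(a_k\xi)-a_k\theta(\xi)\big)=a_j\sigma_k(\xi)$ with $\sigma_k$ continuous --- the conclusion is that $\xi\mapsto a_n\theta(a_n\xi)$ is continuous for all large $n$, which (modulo the continuous map $a_n\sigma_n$) is equivalent to $a_n^2\mathscr S(\theta)=\{0\}$, \emph{not} to $a_n\mathscr S(\theta)=\{0\}$. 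Since $\mathscr S(\theta)$ is merely a module, $a_n^2\mathscr S(\theta)=\{0\}$ does not imply $a_n\mathscr S(\theta)=\{0\}$, so the contradiction with your subsequence choice ``$a_n\mathscr S\neq\{0\}$ for every $n$'' does not close. Two standard repairs, either of which takes a few lines: (i) run the dichotomy on the squares --- $\widehat{a_n^2}(\varphi_n)=1$, so $a_n^2\notin\mathscr I(\theta)$, hence after passing to a subsequence either $a_n^2\mathscr S\neq\{0\}$ for all $n$ or $\mathscr S a_n^2\neq\{0\}$ for all $n$, and this is exactly what the boundedness theorem contradicts; or (ii) follow Bade--Curtis more closely and use regularity in its stronger ``local unit'' form, choosing $b_n$ with $\hat b_n\equiv 1$ on a neighbourhood of $\varphi_n$ and ${\rm supp}\,\hat b_n\subseteq U_n$, and $a_n$ supported where $\hat b_n=1$, so that $b_na_n=a_n$ while $b_ja_k=0$ for $j\neq k$; taking $T_j$ to be multiplication by $b_j$ then makes $T_n\theta R_n$ differ from $\xi\mapsto a_n\theta(\xi)$ by a continuous map, which delivers precisely the conclusion you wanted. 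With either patch your argument is complete and coincides with the proof in the cited source.
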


\begin{thm}\label{thm1}
    Let $\B$ be a Banach $^*$-algebra, $\X_1$ a Banach $\B$-bimodule, $\X_2$ a weak Banach $\B$-bimodule and $\theta:\X_1\to\X_2$ a $\B$-intertwining operator. Suppose that every self-adjoint element $b\in \B_{\rm sa}$ generates a regular Banach function algebra. Then $h\big(\mathscr I(\theta)\big)$ is empty or consists of a finite number of ideals, each of which has finite codimension in $\B$.
\end{thm}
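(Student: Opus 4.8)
The plan is to argue locally through the commutative subalgebras generated by self-adjoint elements and then to patch the resulting information together. Fix a self-adjoint $b \in \B_{\rm sa}$ and let $\A$ denote $\B(b)$. By hypothesis $\A$ is a regular Banach function algebra, so by Remark \ref{regsym} it is symmetric; since $b$ generates $\A$, the Gelfand transform $\hat b$ is a homeomorphism of $\Delta_\A$ onto the (real) spectrum of $b$. Restricting the actions, $\X_1$ becomes a Banach $\A$-bimodule, $\X_2$ a weak Banach $\A$-bimodule, and $\theta$ is still $\A$-intertwining because $\A \subseteq \B$; moreover $\mathscr S(\theta)$ is unchanged and $\mathscr I(\theta) \cap \A$ is exactly the continuity ideal of $\theta$ computed over $\A$. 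Theorem \ref{abnfunalg} then yields that $F_b := h_\A\big(\mathscr I(\theta) \cap \A\big)$ is a finite subset of $\Delta_\A$. Note also that $\B$ is itself symmetric, since $\sigma_\B(b) \subseteq \sigma_\A(b) \subseteq \R$ for every self-adjoint $b$.

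Next I would translate this into representation-theoretic data. Let $I = {\rm ker}\,\Pi \in h(\mathscr I(\theta))$, with $\Pi$ a topologically irreducible $^*$-representation on $\mathcal H$. The restriction $\Pi|_\A$ is a $^*$-representation of the commutative symmetric algebra $\A$, whose $C^*$-envelope is $C_0(\Delta_\A)$; hence $\Pi|_\A$ is supported on the closed set $K_b := h_\A(\A \cap I) \subseteq F_b$, with $\A \cap I = k_\A(K_b)$. Consequently $\Pi(\A) \cong \A/(\A\cap I) \cong \CC^{|K_b|}$, so the self-adjoint operator $\Pi(b)$ has finite spectrum $\sigma(\Pi(b)) = \hat b(K_b)$, of cardinality at most $|F_b|$ and independent of the choice of $\Pi$. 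In particular every spectral projection of $\Pi(b)$ is a polynomial in $\Pi(b)$ and therefore lies in $\Pi(\widetilde\B)$.

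The heart of the matter, and the step I expect to be the main obstacle, is to upgrade ``every $\Pi(b)$ has finite spectrum'' to ``$\B/I$ is finite-dimensional.'' This does not follow formally: the $^*$-algebra $\bigcup_n M_n(\CC)$ acts irreducibly on $\ell^2$ with every self-adjoint element of finite spectrum, yet is infinite-dimensional, the obstruction being that the number of spectral values of $\Pi(b)$ is a priori not bounded uniformly in $b$. My plan to exclude this is to show that $\mathscr S(\theta)$ is a $\B$-module of finite length, so that the per-element cardinalities $|K_b|$ are controlled by a single integer. Concretely, I would combine the finiteness of the $F_b$ with the symmetric structure of $\B$ to produce a uniform bound $\sup_{b\in\B_{\rm sa}} |\sigma(\Pi(b))| = d < \infty$; once such a bound is available, $\B/I$ is a primitive algebraic algebra of bounded degree, and a theorem of Kaplansky forces $\B/I \cong M_k(\CC)$ with $k \le d$, that is, finite codimension.

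Finally, the finiteness of $h(\mathscr I(\theta))$ itself should fall out of the same finite-length statement for $\mathscr S(\theta)$: the primitive ideals containing the two-sided annihilator $\mathscr I(\theta) = \{b\in\B \mid b\mathscr S(\theta)=\mathscr S(\theta)b=\{0\}\}$ are precisely the primitive ideals in the support of $\mathscr S(\theta)$, and a module of finite length admits only finitely many such. Alternatively, once each member of the hull is known to be finite-codimensional (hence maximal and modular), distinct members are separated inside some $\B(b)$ and thus correspond to distinct points of the finite sets $F_b$; a covering argument over a suitable generating family of self-adjoint elements then bounds their number. Either route completes the proof, the genuine difficulty being concentrated in the uniform-bound/finite-length step of the previous paragraph.
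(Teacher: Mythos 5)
Your first half tracks the paper closely: restricting to the regular Banach function algebras $\B(b)$ and invoking Theorem \ref{abnfunalg} to get finite hulls is exactly the paper's localization step, and your representation-theoretic reformulation (each $\Pi(b)$, $b\in\B_{\rm sa}$, has finite spectrum) is a legitimate variant of the paper's conclusion that each self-adjoint coset has finite spectrum in the quotient; the synthesis point you implicitly use ($\A\cap I=k_\A(K_b)$) is indeed justified by symmetry of $\B(b)$, since $^*$-representations of a commutative hermitian Banach $^*$-algebra factor through its $C^*$-completion $C_0(\Delta_\A)$, whose representation kernels are hull-kernel ideals. The paper effects the same control slightly differently, by passing to the self-adjoint ideal $\mathscr I(\theta)_*=\{a\in\mathscr I(\theta)\mid a^*\in\mathscr I(\theta)\}$ and using regularity plus symmetry of $\B(b)$ to show $h\big(\B(b)\cap\mathscr I(\theta)_*\big)$ is still the finite set $S$.

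The genuine gap is at what you correctly identify as the heart of the matter, but your diagnosis of it is wrong and your proposed repair is not carried out. Your claimed obstruction --- that pointwise finite spectra cannot yield finite-dimensionality without a uniform bound, witnessed by $\bigcup_n M_n(\CC)$ acting on $\ell^2$ --- does not apply here, because $\bigcup_n M_n(\CC)$ is not a Banach algebra; completeness is precisely what makes the implication true. This is the content of the result the paper cites (Aupetit, Corollary 5.4.3, applied to the $A^*$-algebra $\B/I$): for Banach algebras, finiteness of the spectrum of each self-adjoint element already forces finite-dimensionality, the uniform bound being automatic via Aupetit's scarcity/subharmonicity theorem (e.g.\ applied to the analytic family $\lambda\mapsto h+\lambda k$, whose spectra are finite on the non-thin set $\R$) together with the Hirschfeld--Johnson theorem for semisimple Banach algebras. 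In place of this, you propose to prove that $\mathscr S(\theta)$ has finite length as a $\B$-module and to extract $\sup_{b}|\sigma(\Pi(b))|<\infty$ from the sets $F_b$, but no argument is given for either claim --- and combining the per-element finite sets $F_b$ over all $b\in\B_{\rm sa}$ into a uniform bound is exactly the difficulty, not a routine consequence of what precedes. Moreover, even granting the bound, the appeal to Kaplansky is not immediate: finite spectrum does not imply algebraicity in general (quasinilpotent elements), so you would need the $A^*$-structure to make self-adjoint elements algebraic of bounded degree and then a $^*$-PI argument (Amitsur) to reach all elements before Kaplansky's theorem on primitive PI-algebras applies. As written, the proposal replaces the one citation that closes the proof with an unproven program, so it does not constitute a proof.
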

\begin{proof}
    Let $\mathscr I(\theta)_*=\{a\in \mathscr I(\theta)\mid a^*\in\mathscr I(\theta)\}$. Then $\mathscr I(\theta)_*$ and its closure $\overline{\mathscr I(\theta)_*}$, are self-adjoint ideals of $\B$. Now, by assumption, $\B(b)$ is a regular Banach function algebra, for every $b=b^*\in\B$. Viewing $\theta$ as a $\B(b)$-intertwining operator, we can apply Theorem \ref{abnfunalg} to see that $S:=h\big(\B(b)\cap \mathscr I(\theta)\big)$ is a finite subset of $\Delta_{\B(b)}$. If we let $$J(S)=\{f\in\B(b)\mid {\rm Supp}(\hat f) \text{ is compact and does not intersect }S\},$$ then, because of the regularity of $\B(b)$, $h\big(J(S)\big)=S$ and $J(S)$ is contained in each ideal of $\B(b)$ whose hull is $S$. In particular, $J(S)\subset \B(b)\cap \mathscr I(\theta)$. By Remark \ref{regsym}, $\B(b)$ is also symmetric and hence $$f\in J(S) \Leftrightarrow f^*\in J(S),$$ for all $f\in \B(b)$ \cite[Theorem 11.4.1]{Pa94}. Consequently $J(S)\subset \B(b)\cap \mathscr I(\theta)_*$ and therefore $h\big( \B(b)\cap \mathscr I(\theta)_*\big)=S$. It then follows that $b+\overline{\mathscr I(\theta)_*}$ has finite spectrum in $\B/\overline{\mathscr I(\theta)_*}$. If we let $I$ be the intersection of all ideals in ${\rm Prim}_*\B$ containing $\overline{\mathscr I(\theta)_*}$, we first note that $\B/I$ is an $A^*$-algebra \cite[Theorem 9.7.10]{Pa94} and that for every $b\in\B_{\rm sa}$, $b+I\in \B/I$ has finite spectrum, as it is the homomorphic image of $b+\overline{\mathscr I(\theta)_*}$. By \cite[Corollary 5.4.3]{Au91}, the algebra $\B/I$ is finite-dimensional and, hence, there are only finitely many ideals in ${\rm Prim}_*\B$ containing $\overline{\mathscr I(\theta)_*}$ and all of them are of finite codimension.
\end{proof}

The following theorem -the main of this section- is a generalization of \cite[Theorem 2.3, Theorem 3.2]{Ru96}. Although the context here is somewhat different, the proof is exactly the same, and it will be repeated here for convenience. As in Example \ref{ex-inter}, any Banach algebra $\B$ will be considered a bimodule over itself with left/right multiplication.

\begin{thm}\label{theprop}
    Let $\B$ be an $A^*$-algebra, $\X_1$ a Banach $\B$-bimodule, $\X_2$ a weak Banach $\B$-bimodule and $\theta:\X_1\to\X_2$ a $\B$-intertwining operator. Further suppose that 
    \begin{enumerate}
        \item[(i)] There exists a dense Banach $^*$-subalgebra $\A\subset \B$, such that $\A(b)$ is a regular Banach function algebra, for all $b\in\A_{\rm sa}$.
        \item[(ii)] $\B$ has the Wiener property \emph{(W)}.
        \item[(iii)] Every finite subset $F\subset{\rm Prim}_*\B$ such that all $P\in F$ has finite codimension is a set of synthesis for $\B$.
    \end{enumerate} Then $\overline{\mathscr I(\theta)}^{\norm{\cdot}_{\B}}$ has finite codimension in $\B$. Furthermore, if $\X_1=\B$ and $\B$ also satisfies \begin{enumerate}
        \item[(iv)] Every closed two-sided ideal $I\subset \B$ of finite codimension has a bounded left approximate identity.
    \end{enumerate} Then $\theta$ is continuous if and only if $\mathscr I(\theta)$ is closed. \end{thm}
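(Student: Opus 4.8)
The plan is to first establish that $\overline{\mathscr I(\theta)}^{\norm{\cdot}_{\B}}$ has finite codimension in $\B$, and then separately deduce the continuity equivalence under the additional hypothesis (iv).

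For the finite-codimension claim, I would start by invoking Theorem \ref{thm1}. To do so I must verify its hypothesis that every self-adjoint $b \in \B_{\rm sa}$ generates a regular Banach function algebra. The subtlety is that hypothesis (i) only provides this for elements of the \emph{dense} subalgebra $\A$, not all of $\B$; however, Theorem \ref{thm1} will be applied via the local structure, so I expect the argument to proceed by working over $\A$ first. Concretely, I would restrict $\theta$ to the $\A$-intertwining operator it induces (any $\B$-intertwining operator is automatically $\A$-intertwining since $\A \subset \B$), and apply Theorem \ref{thm1} with $\A$ in place of $\B$. This yields that $h\big(\mathscr I(\theta)\big)$ — interpreted appropriately over $\A$ — consists of finitely many ideals of finite codimension. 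The key transfer step is to push this information from $\A$ up to $\B$: since $\A$ is dense and the ${\rm Prim}_*$ spaces of $\A$ and $\B$ are related (a dense $^*$-subalgebra sharing the same enveloping $C^*$-algebra has a homeomorphic space of topologically irreducible $^*$-representations), the hull $h\big(\mathscr I(\theta)\big)$ computed in ${\rm Prim}_*\B$ is again a finite set $F$ of finite-codimensional primitive ideals.

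Now I would use hypotheses (ii) and (iii) to convert this hull information into the finite-codimension conclusion. The Wiener property (W) guarantees that if $\overline{\mathscr I(\theta)}$ were a proper ideal, its hull in ${\rm Prim}_*\B$ would be nonempty and would be contained in $F$; so $h\big(\overline{\mathscr I(\theta)}\big) = F$ is a finite set of finite-codimensional ideals. By hypothesis (iii) this set $F$ is a set of synthesis, meaning $k(F)$ is the \emph{unique} closed two-sided ideal with hull $F$. Since $\overline{\mathscr I(\theta)}$ is a closed two-sided ideal with hull $F$, synthesis forces $\overline{\mathscr I(\theta)} = k(F) = \bigcap_{P \in F} P$. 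Because $F$ is finite and each $P \in F$ has finite codimension, the natural embedding $\B/k(F) \hookrightarrow \prod_{P \in F} \B/P$ into a finite product of finite-dimensional algebras shows $\B/\overline{\mathscr I(\theta)}$ is finite-dimensional, giving the desired finite codimension.

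For the final equivalence, the forward direction is immediate: if $\theta$ is continuous then $\mathscr S(\theta) = \{0\}$, so $\mathscr I(\theta) = \B$ is closed. For the converse, assume $\mathscr I(\theta)$ is closed; then by the first part $\mathscr I(\theta) = \overline{\mathscr I(\theta)}$ has finite codimension in $\B$, so by hypothesis (iv) it has a bounded left approximate identity $\{e_\lambda\}$. With $\X_1 = \B$, the point is that continuity of $\theta$ reduces to controlling $\mathscr S(\theta)$; the definition of $\mathscr I(\theta)$ gives $\mathscr I(\theta)\,\mathscr S(\theta) = \{0\}$, and I would use a Cohen-type factorization or a direct approximate-identity argument to show that $\mathscr S(\theta)$ is annihilated on the left by a net converging appropriately, forcing $\mathscr S(\theta) = \{0\}$ and hence continuity. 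The main obstacle I anticipate is the transfer of hull/synthesis data between $\A$ and $\B$ in the first part — ensuring that regularity over the dense subalgebra genuinely controls the closed ideal structure of $\B$ itself — rather than the final approximate-identity step, which is standard once finite codimension is in hand.
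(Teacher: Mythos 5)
Your overall architecture matches the paper's (localize to the dense subalgebra $\A$ via Theorem \ref{thm1}, use (W) to produce primitive ideals containing $\mathscr I(\theta)$, use (iii) to identify $\overline{\mathscr I(\theta)}$ with $k(F)$, then Cohen factorization plus finite codimension for the equivalence), but the transfer step from $\A$ to $\B$ --- the very step you flag as the main obstacle --- rests on a claim that is neither available nor true. Hypothesis (i) says nothing about $\A$ and $\B$ sharing the same enveloping $C^*$-algebra; and even when they do, the natural map $P\mapsto P\cap\A$ from ${\rm Prim}_*\B$ to ${\rm Prim}_*\A$ need not be a homeomorphism. Its injectivity is precisely a $^*$-regularity-type condition which can fail: for instance $\A=L^1(\G)$ inside $\B=C^*(\G)$ share the same enveloping $C^*$-algebra, yet for groups $\G$ that are not $^*$-regular, distinct primitive ideals of $C^*(\G)$ can intersect $L^1(\G)$ in the same ideal. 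Injectivity of this map is exactly what your argument needs in order to conclude that $h\big(\mathscr I(\theta)\big)$ is \emph{finite} from the finiteness of the hull of $\A\cap\mathscr I(\theta)$, so as written the step fails.

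The paper closes this gap without identifying any representation spaces. For each $P\in h\big(\mathscr I(\theta)\big)\subset{\rm Prim}_*\B$ (nonempty by (W) if $\overline{\mathscr I(\theta)}$ is proper), the restriction of the corresponding representation to the dense subalgebra remains topologically irreducible, so $\A\cap P\in{\rm Prim}_*\A$ and it contains $\A\cap\mathscr I(\theta)$. Hence, setting $I=k\big(h(\mathscr I(\theta))\big)$, one gets $\A\cap I\supset k\big(h(\A\cap\mathscr I(\theta))\big)$, and the latter has finite codimension in $\A$ by Theorem \ref{thm1}. Therefore the image of $\A$ in $\B/I$ is finite-dimensional and dense, forcing $\B/I$ to be finite-dimensional; this yields \emph{simultaneously} that $h\big(\mathscr I(\theta)\big)$ is finite and that its members have finite codimension, after which (iii) gives $I=\overline{\mathscr I(\theta)}$. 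You should replace your homeomorphism claim by this density-of-quotients argument. A secondary point about the last step: knowing $\mathscr I(\theta)\mathscr S(\theta)=\{0\}$ and that $\mathscr I(\theta)$ has a bounded left approximate identity does not by itself ``force $\mathscr S(\theta)=\{0\}$'' --- $\X_2$ is only a weak bimodule, so there is no sense in which $e_\lambda\eta\to\eta$ for $\eta\in\mathscr S(\theta)$. The correct mechanism (and the paper's) is: factor null sequences $b_n=c*d_n$ inside $\mathscr I(\theta)$ by Cohen--Hewitt, use that $d\mapsto\theta(cd)$ is continuous for $c\in\mathscr I(\theta)$ to conclude $\theta|_{\mathscr I(\theta)}$ is continuous, and then extend continuity to all of $\B$ because $\mathscr I(\theta)$ is closed of finite codimension.
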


\begin{proof}
    As the inclusion $\A\subset \B$ is continuous, we can view $\X_1$ as a Banach $\A$-bimodule, $\X_2$ a weak Banach $\A$-bimodule and $\theta$ as an $\A$-intertwining operator, whose continuity ideal is $\A\cap \mathscr I(\theta)$. Now, if $\overline{\mathscr I(\theta)}^{\norm{\cdot}_{\B}}$ is a proper ideal, then there exists $P\in {\rm Prim}_*\B$ containing $\mathscr I(\theta)$. Since $\A$ is dense in $\B$, the ideal $\A\cap P$ belongs to ${\rm Prim}_*\A$ and contains $\A\cap \mathscr I(\theta)$. Now let $I$ be the intersection of all $P\in {\rm Prim}_*\B$ containing $\mathscr I(\theta)$ and note that $$\A\cap I=\bigcap_{P\in h(\mathscr I(\theta))} \A\cap P \supset\bigcap_{P'\in h(\A\cap\mathscr I(\theta))}P'=k(h\big(\A\cap\mathscr I(\theta)\big)).$$ Because of Theorem \ref{thm1}, $\A\cap I$ has finite codimension in $\A$. That means that the image of $\A$ under the quotient map $\B\to\B/I$ is finite-dimensional and, by density, $\B/I$ has to be finite-dimensional too. Consequently, $h\big(\mathscr I(\theta)\big)$ is a finite subset of ${\rm Prim}_*\B$, composed of ideals with finite codimension and therefore a set of synthesis. Hence $I=\overline{\mathscr I(\theta)}^{\norm{\cdot}_{\B}}$ and the first conclusion follows.

    We now complete the second part of the proof. If $\theta$ is continuous, $\mathscr I(\theta)$ is clearly closed. On the other hand, if $\mathscr I(\theta)$ is closed then  it has finite codimension and so, by assumption, it has a bounded left approximate identity. Now and because of the Cohen-Hewitt factorization theorem \cite[Corollary 11.12]{BD73}, for every sequence $\{b_n\}\subset\mathscr I(\theta)$ converging to zero, there exists $c, d_n\in\mathscr I(\theta)$ that factorize $b_n$: $$b_n=cd_n\quad\text{and}\quad \lim_{n} d_n=0.$$ Since the map $\B\ni d\mapsto \theta(cd)$ is continuous by the definition of $\mathscr I(\theta)$, we have $$\lim_n\theta(b_n)=\lim_n\theta(cd_n)=0$$ and thus the restriction of $\theta$ to $\mathscr I(\theta)$ is continuous. Since $\mathscr I(\theta)$ has finite codimension, $\theta$ is in fact continuous on all of $\B$.
\end{proof}

\begin{rem}\label{conditions}
Condition \emph{(iv)} in Theorem \ref{theprop} will be the most restrictive in what follows, so it seems convenient now to mention how to imply it. This condition is satisfied by $L^1(\G)$ when $\G$ is amenable \cite[Theorem 2]{LvJ73} and more generally, for twisted group algebras of amenable groups (Theorem \ref{twistedgp}). In the more abstract setting, it is also satisfied by $C^*$-algebras and amenable Banach algebras \cite[Proposition VII.2.31]{He89}.
\end{rem}

Now we will consider a case new to the setting of convolution algebras, namely the absence of finite codimensional two-sided closed ideals. When one considers the group algebra $L^1(\G)$, finite codimensional two-sided closed ideals always exist. An easy example of that is the kernel of the augmentation map $$L^1(\G)\ni\Phi\mapsto\int_\G\Phi(x)\d x\in\CC,$$ which is called the augmentation ideal. Furthermore, if $\G$ is abelian, plenitude of such ideals exist, as the study of $C^*(\G)\cong C_0(\hat \G)$ easily indicates. On the other hand, it is perfectly possible for $L^1(\G\,\vert\,\mathscr C)$ to be simple. In fact, it can be showed -with the same arguments that we will use in the proof of Proposition \ref{simple}- that if $L^1(\G\,\vert\,\mathscr C)$ is simple if it is unital, symmetric and $C^*(\G\,\vert\,\mathscr C)$ is simple.

\begin{prop}\label{simple}
    Let $\B$ be a symmetric, unital $A^*$-algebra. Suppose $C^*(\B)$ has no proper closed two-sided ideals with finite codimension. Then the same holds for $\B$. 
\end{prop}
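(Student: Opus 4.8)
The plan is to argue by contraposition: assuming that $\B$ possesses a proper closed two-sided ideal of finite codimension, I will manufacture one inside $C^*(\B)$. The mechanism is the universal property of the enveloping $C^*$-algebra: every continuous $^*$-homomorphism from $\B$ onto a nonzero finite-dimensional $C^*$-algebra factors through $C^*(\B)$, and its kernel then gives a proper closed two-sided ideal of finite codimension in $C^*(\B)$. The first move is to upgrade the given ideal to a \emph{self-adjoint} one. If $I\subsetneq\B$ is a proper closed two-sided ideal of finite codimension, then $I^*$ is again such an ideal, since the involution is a conjugate-linear homeomorphism carrying two-sided ideals to two-sided ideals. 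Hence $J:=I\cap I^*$ is a proper, closed, self-adjoint two-sided ideal, and it retains finite codimension because $b+J\mapsto(b+I,b+I^*)$ embeds $\B/J$ into the finite-dimensional space $\B/I\oplus\B/I^*$.

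Next I would study the quotient $\B/J$. As $J$ is self-adjoint, $\B/J$ is a finite-dimensional \emph{unital} Banach $^*$-algebra. It inherits symmetry: for any self-adjoint element $b+J$ one may choose a self-adjoint lift $b_0=\tfrac12(b+b^*)$ (note $b-b^*\in J$), and then the spectral containment $\sigma_{\B/J}(b_0+J)\subseteq\sigma_\B(b_0)\subseteq\R$ shows every self-adjoint element of $\B/J$ has real spectrum. For a symmetric Banach $^*$-algebra the Pták (Gelfand--Naimark) function $p(x)=r(x^*x)^{1/2}$, where $r$ denotes the spectral radius, is a $C^*$-seminorm dominated by the norm; see \cite{Pa94}. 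Since $\B/J$ is unital we have $p(1+J)=1$, so $p\neq0$, and $A_0:=(\B/J)/\ker p$ is a \emph{nonzero finite-dimensional} $C^*$-algebra (already complete, being finite-dimensional). The composite $\phi:\B\to\B/J\to A_0$ is therefore a surjective continuous $^*$-homomorphism onto a nonzero finite-dimensional $C^*$-algebra.

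Finally I would invoke the universal property of the enveloping algebra. Realizing $A_0\subset\BofH$ for a finite-dimensional $H$, the map $\phi$ is a bounded $^*$-representation of $\B$, so it extends to a $^*$-homomorphism $\widetilde\phi:C^*(\B)\to A_0$ along the canonical map $\iota:\B\to C^*(\B)$. As $\iota(\B)$ is dense in $C^*(\B)$ and $\phi$ is onto, $\widetilde\phi$ is surjective, whence $C^*(\B)/\ker\widetilde\phi\cong A_0$ is finite-dimensional and nonzero. Thus $\ker\widetilde\phi$ is a proper closed two-sided ideal of finite codimension in $C^*(\B)$, contradicting the hypothesis; this proves the contrapositive.

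I expect the delicate point to be the passage from the abstract quotient $\B/J$ --- a priori only a finite-dimensional symmetric Banach $^*$-algebra, possibly carrying a nilpotent radical and a norm far from any $C^*$-norm --- to a genuine \emph{nonzero} $C^*$-algebra. This is exactly where symmetry is indispensable: without it the Gelfand--Naimark seminorm need not be a $C^*$-seminorm, and one could not exclude the possibility that $\B/J$ is annihilated by every $^*$-representation, leaving no finite-dimensional $C^*$-quotient to transport to $C^*(\B)$. Verifying that $p$ is a $C^*$-seminorm and that $p(1+J)\neq0$, so that $A_0\neq0$, is the crux; the self-adjointness reduction in the first step is precisely what lets the involution descend to the quotient so that this machinery applies.
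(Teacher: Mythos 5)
Your proof is correct, but it follows a genuinely different route from the paper's. The paper argues directly rather than by contraposition: given a closed two-sided ideal $I\subseteq\B$ of finite codimension, its closure in $C^*(\B)$ is again a closed two-sided ideal of finite codimension (writing $\B=I+F$ with $F$ finite-dimensional gives $C^*(\B)=\overline{I+F}\subseteq\overline{I}+F$), hence equals $C^*(\B)$ by hypothesis; thus $I$ contains a sequence $b_n\to 1$ in the $C^*$-norm, so $b_n$ is invertible in $C^*(\B)$ for large $n$, and symmetry enters precisely to conclude that $b_n$ is then invertible in $\B$ itself --- unital symmetric Banach $^*$-algebras are inverse-closed (spectrally invariant) in their enveloping $C^*$-algebras --- whence $I=\B$. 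You instead run the contrapositive through a quotient construction: pass to the self-adjoint ideal $J=I\cap I^*$, check that symmetry descends to the finite-dimensional unital quotient $\B/J$, and use Pt\'ak's theorem (in a symmetric algebra $p(x)=r(x^*x)^{1/2}$ is a $C^*$-seminorm, and $p(1+J)=1\neq 0$) to produce a nonzero finite-dimensional $C^*$-quotient $A_0$, which the universal property of $C^*(\B)$ converts into a proper closed two-sided ideal $\ker\widetilde{\phi}$ of finite codimension there. Each argument uses symmetry and unitality at exactly one crucial point, but for different purposes: the paper's inverse-closedness argument yields a three-line proof, while yours is longer but avoids spectral invariance altogether and is more constructive, exhibiting an explicit finite-dimensional $C^*$-quotient of $C^*(\B)$ from any finite-codimensional closed ideal of $\B$; your intermediate steps (the reduction to $I\cap I^*$, the descent of symmetry to quotients, and completeness-for-free of the finite-dimensional $A_0$) are all sound.
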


\begin{proof}
    Let $I$ a finite-codimensional closed two-sided ideal of $\B$. Then $I$ must be dense in $C^*(\B)$, as its closure is a finite-codimensional two-sided ideal of $C^*(\B)$. In particular, there exists a sequence $b_n\in I$ such that $b_n\to 1$ and therefore $b_n$ is invertible in $C^*(\B)$, for a large enough $n$. But in that case $b_n$ is invertible in $\B$ too and hence $I=\B$. 
\end{proof}

The ideal theory of $C^*$-algebras is both more studied and more forgiving, so taking the condition in Proposition \ref{simple} as an assumption seems reasonable. In fact, the same condition has been used previously by Sinclair \cite[Corollary 3.4]{Si74} and by Albrecht and Dales \cite[Theorem 2.6]{AlDa83}. We will use it in the following proposition to guarantee the automatic continuity of a big class of intertwining operators over $\B$.

\begin{cor}\label{theprop-simple}
    Let $\B$ be a symmetric, unital $A^*$-algebra, $\X_1$ a Banach $\B$-bimodule, $\X_2$ a weak Banach $\B$-bimodule and $\theta:\X_1\to\X_2$ a $\B$-intertwining operator. Further suppose that 
    \begin{enumerate}
        \item[(i)] There exists a dense Banach $^*$-subalgebra $\A\subset \B$, such that $\A(b)$ is a regular Banach function algebra, for all $b\in\A_{\rm sa}$.
        \item[(ii)] $\B$ has the Wiener property \emph{(W)}.
        \item[(iii)] $C^*(\B)$ has no proper closed two-sided ideals with finite codimension.
    \end{enumerate} Then $\theta$ is continuous.
\end{cor}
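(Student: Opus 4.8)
The plan is to deduce Corollary \ref{theprop-simple} from Theorem \ref{theprop} by showing that, under the present hypotheses, condition (iv) of that theorem is vacuously satisfied, so that the machinery producing finite codimension for $\overline{\mathscr I(\theta)}^{\norm{\cdot}_{\B}}$ already forces continuity outright. First I would observe that hypotheses (i) and (ii) here are literally conditions (i) and (ii) of Theorem \ref{theprop}, so the only thing needed to run the first half of that theorem is its condition (iii): that every finite subset of ${\rm Prim}_*\B$ consisting of finite-codimensional ideals is a set of synthesis. I would therefore transfer the present hypothesis (iii)---that $C^*(\B)$ has no proper finite-codimensional closed two-sided ideals---through Proposition \ref{simple}, which applies verbatim since $\B$ is a symmetric unital $A^*$-algebra. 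The conclusion of Proposition \ref{simple} is that $\B$ itself has no proper finite-codimensional closed two-sided ideals.

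The key point is that this makes the relevant finite subset of ${\rm Prim}_*\B$ empty: if $P\in{\rm Prim}_*\B$ had finite codimension, then $P$ would be a proper finite-codimensional closed two-sided ideal of $\B$, contradicting what we just derived. Hence there are \emph{no} finite-codimensional primitive ideals at all, so the only finite subset of ${\rm Prim}_*\B$ composed of finite-codimensional ideals is $\emptyset$. The empty subset is trivially a set of synthesis (its kernel is all of $\B$, which is the unique closed two-sided ideal with empty hull by the Wiener property (W)), so condition (iii) of Theorem \ref{theprop} holds. Moreover condition (iv) of Theorem \ref{theprop} becomes vacuous for the same reason: the only closed two-sided ideal of finite codimension is $\B$ itself, which, being unital, certainly has a bounded left approximate identity.

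With all four conditions of Theorem \ref{theprop} in place, I would apply its first conclusion to get that $\overline{\mathscr I(\theta)}^{\norm{\cdot}_{\B}}$ has finite codimension in $\B$. But a closed two-sided ideal of finite codimension in $\B$ must, by the above, be all of $\B$; thus $\overline{\mathscr I(\theta)}^{\norm{\cdot}_{\B}}=\B$, which forces $h\big(\mathscr I(\theta)\big)=\emptyset$. Now I would invoke the Wiener property (W) in contrapositive form: a proper closed two-sided ideal is contained in some ${\rm ker}\,\Pi\in{\rm Prim}_*\B$, so an ideal with empty hull cannot be proper. Applying this to $\overline{\mathscr I(\theta)}^{\norm{\cdot}_{\B}}$ confirms it equals $\B$, meaning $\mathscr I(\theta)$ is dense. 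Since $\B$ is unital, $1\in\B=\overline{\mathscr I(\theta)}^{\norm{\cdot}_{\B}}$, and one argues exactly as in the second half of the proof of Theorem \ref{theprop}: $\mathscr I(\theta)$ closed and of finite codimension (indeed equal to $\B$) carries a bounded left approximate identity, so by Cohen--Hewitt factorization the restriction of $\theta$ to $\mathscr I(\theta)=\B$ is continuous, whence $\theta$ is continuous on all of $\B$.

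The main obstacle I anticipate is a bookkeeping subtlety rather than a deep one: reconciling the statement of Theorem \ref{theprop} (which literally requires $\X_1=\B$ for its continuity conclusion) with the present corollary, whose first two hypotheses place no such restriction on $\X_1$. The cleanest route is to note that once $\overline{\mathscr I(\theta)}^{\norm{\cdot}_{\B}}=\B$ and $\B$ is unital, the continuity ideal $\mathscr I(\theta)$ is dense and, being the set of $b$ for which $\xi\mapsto\theta(b\xi)$ and $\xi\mapsto\theta(\xi b)$ are continuous, it is closed whenever it has finite codimension; since here it equals $\B$ and $\B$ has a (two-sided) identity, the Cohen--Hewitt argument applies directly to the general Banach $\B$-bimodule $\X_1$ without needing $\X_1=\B$. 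I would make this explicit to justify that the second half of the argument goes through for arbitrary $\X_1$, rather than silently importing the $\X_1=\B$ hypothesis of Theorem \ref{theprop}.
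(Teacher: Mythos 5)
Your first half is sound and, modulo packaging, is what the paper does: hypotheses (i)--(ii) feed into Theorem \ref{theprop}, Proposition \ref{simple} eliminates proper finite-codimensional closed two-sided ideals of $\B$ (hence all finite-codimensional members of ${\rm Prim}_*\B$), conditions (iii) and (iv) of Theorem \ref{theprop} then hold for trivial reasons (your observation that $\emptyset$ is a set of synthesis because of the Wiener property is correct), and the first conclusion of that theorem forces $\overline{\mathscr I(\theta)}^{\norm{\cdot}_{\B}}=\B$, i.e.\ $\mathscr I(\theta)$ is dense. The only cosmetic difference so far is that the paper reruns the first part of the proof of Theorem \ref{theprop} and reaches a contradiction with Proposition \ref{simple} before synthesis is ever needed, whereas you verify the synthesis hypothesis and invoke the theorem as a black box; both are legitimate, and both use only the first conclusion of Theorem \ref{theprop}, so your worry about the $\X_1=\B$ restriction in its second part is correctly handled.

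The genuine gap is in your last step, from ``$\mathscr I(\theta)$ is dense'' to ``$\mathscr I(\theta)=\B$''. Your justification --- that $\mathscr I(\theta)$, being the set of $b$ for which $\xi\mapsto\theta(b\xi)$ and $\xi\mapsto\theta(\xi b)$ are continuous, ``is closed whenever it has finite codimension'' --- is both inapplicable and false. Inapplicable, because what you have established is finite codimension of the \emph{closure} of $\mathscr I(\theta)$, not of $\mathscr I(\theta)$ itself; false, because a finite-codimensional subspace of a Banach space need not be closed (the kernel of a discontinuous linear functional is dense of codimension one), and nothing in the defining property of $\mathscr I(\theta)$ rules this out --- the paper explicitly warns that $\mathscr I(\theta)$ need not be closed when $\X_2$ is only a weak bimodule. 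The subsequent clause ``since here it equals $\B$'' is precisely the conclusion you are trying to reach, so the argument is circular at this point. The missing step, which is the one the paper supplies, is elementary spectral theory: ${\rm Inv}(\B)$ is open and nonempty in the unital Banach algebra $\B$, so the dense set $\mathscr I(\theta)$ meets it; a two-sided ideal containing an invertible element contains $1$ and hence equals $\B$. Once $1\in\mathscr I(\theta)$, continuity of $\theta$ is immediate from the characterization of the continuity ideal (the map $\xi\mapsto\theta(1\cdot\xi)=\theta(\xi)$ is continuous), so your appeal to Cohen--Hewitt factorization is unnecessary. With that one line inserted in place of the flawed clause, your proof closes and coincides with the paper's.
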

\begin{proof}
    Repeating the first part of the proof of Theorem \ref{theprop} shows that, if $\overline{\mathscr I(\theta)}^{\norm{\cdot}_{\B}}$ was a proper ideal, then $h\big(\mathscr I(\theta)\big)$ would be a finite subset of ${\rm Prim}_*\B$, composed of ideals with finite codimension. However, this is not possible due to Proposition \ref{simple}. That makes $\mathscr I(\theta)$ a dense two-sided ideal in $\B$. As $\B$ is unital, basic spectral theory tells us that $\mathscr I(\theta)$ must contain an invertible element and therefore $\mathscr I(\theta)=\B$. So $\theta$ must be continuous.
\end{proof}

\begin{rem}
    While is true that in order to apply Corollary \ref{theprop-simple} we added the condition of symmetry, in practice this is not a restriction as we will need to assume symmetry to guarantee the Wiener property. More on this in the next section.
\end{rem}

The following theorem gives conditions that allows to check the continuity of homomorphisms of $C^*(\B)$ at the level of $\B$. It is the main result of \cite{Run94}.
\begin{thm}\label{cstar}
    Let $\B$ be a symmetric, locally regular $A^*$-algebra with the following properties: \begin{enumerate}
        \item[(i)] Every closed two-sided ideal $I\subset\B$ of finite codimension has a bounded left approximate identity.
        \item[(ii)] Every closed, finite subset $F\subset {\rm Prim}_*\B$ that only consists of ideals of finite codimension is a set of synthesis for $\B$.
    \end{enumerate} Let $\theta:C^*(\B)\to \A$ be an homomorphism, where $\A$ is another Banach algebra. Then the following are equivalent: \begin{enumerate}
        \item[(a)] $\theta$ is continuous.
        \item[(b)] $\theta|_\B$ is continuous.
        \item[(c)] $\mathscr I(\theta)\cap \B$ is closed. 
    \end{enumerate}
\end{thm}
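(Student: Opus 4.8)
The plan is to pit the rich structure of the enveloping $C^*$-algebra $C^*(\B)$ against the regularity and synthesis hypotheses imposed on $\B$, reducing everything to the already-proven Theorem \ref{theprop}. Throughout, view $\A$ as a weak Banach $C^*(\B)$-bimodule via $\theta$ (Example \ref{ex-inter}(ii)), so that $\theta$ is a $C^*(\B)$-intertwining operator with $\X_1=C^*(\B)$, while $\theta|_\B$ is a $\B$-intertwining operator; note that $\mathscr S(\theta)$ and $\mathscr I(\theta)$ are computed over $C^*(\B)$. Since $C^*(\B)$ is a $C^*$-algebra, every self-adjoint element generates a commutative $C^*$-algebra, which is a regular Banach function algebra; moreover a $C^*$-algebra has the Wiener property \emph{(W)}, every closed subset of ${\rm Prim}_*C^*(\B)$ is a set of synthesis (closed ideals of a $C^*$-algebra are intersections of the primitive ideals containing them), and every finite-codimensional closed two-sided ideal carries a bounded approximate identity. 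Hence all four hypotheses of Theorem \ref{theprop} hold for $C^*(\B)$ (taking the dense regular subalgebra to be $C^*(\B)$ itself), and that theorem yields the pivotal reduction: $\theta$ is continuous if and only if $\mathscr I(\theta)$ is closed in $C^*(\B)$. Its first part, together with Theorem \ref{thm1}, also gives that $\overline{\mathscr I(\theta)}^{\norm{\cdot}_{C^*(\B)}}$ has finite codimension and that $h\big(\mathscr I(\theta)\big)$ is a finite subset of ${\rm Prim}_*C^*(\B)$ consisting of finite-codimensional ideals.

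The two easy implications follow quickly. For (a)$\Rightarrow$(b), the inclusion $\B\hookrightarrow C^*(\B)$ is continuous, so $\theta|_\B$ inherits continuity. For (b)$\Rightarrow$(c), observe that $\mathscr I(\theta)\cap\B=(\theta|_\B)^{-1}\big(\{a\in\A:\ a\,\mathscr S(\theta)=\mathscr S(\theta)\,a=\{0\}\}\big)$; the set on the right is the two-sided annihilator of the closed set $\mathscr S(\theta)$, hence closed in $\A$, so if $\theta|_\B$ is continuous its preimage $\mathscr I(\theta)\cap\B$ is closed in $\B$. (The analogous preimage argument against the closed ideal $\mathscr I(\theta)$ shows that continuity of $\theta$ forces (c) directly.) This already establishes (a)$\Rightarrow$(b)$\Rightarrow$(c).

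The substance is the implication (c)$\Rightarrow$(a), and here the hypotheses on $\B$ are consumed. Assume $\mathscr I(\theta)\cap\B$ is closed; the strategy is to show it is a finite-codimensional ideal of $\B$ and then factorize. Symmetry provides a homeomorphism ${\rm Prim}_*\B\cong{\rm Prim}_*C^*(\B)$, $P\leftrightarrow P\cap\B$, arising from the bijective correspondence between topologically irreducible $^*$-representations of $\B$ and of its enveloping $C^*$-algebra; transporting the finite set $h\big(\mathscr I(\theta)\big)$ of the first paragraph yields a finite set $F\subset{\rm Prim}_*\B$ of finite-codimensional primitive ideals. Using local regularity, for each $b$ in the dense set $R\subset\B_{\rm sa}$ the algebra $\B(b)$ is a regular Banach function algebra, so Theorem \ref{abnfunalg} applied to $\theta|_{\B(b)}$ controls the hull of $\mathscr I(\theta)\cap\B(b)$; assembling this local information identifies $h\big(\mathscr I(\theta)\cap\B\big)$ with $F$. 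Since $\mathscr I(\theta)\cap\B$ is by assumption a closed two-sided ideal whose hull is the finite set $F$ of finite-codimensional ideals, the synthesis hypothesis (ii) forces $\mathscr I(\theta)\cap\B=k(F)$, which has finite codimension in $\B$; by hypothesis (i) it carries a bounded left approximate identity. A Cohen--Hewitt factorization inside $\mathscr I(\theta)\cap\B$, exactly as in the second part of the proof of Theorem \ref{theprop}, then shows $\theta$ is continuous on $\mathscr I(\theta)\cap\B$ and hence, by finite codimension, on $\B$. Finally, transferring back across the homeomorphism one checks $\overline{\mathscr I(\theta)\cap\B}^{\,C^*(\B)}=k\big(h(\mathscr I(\theta))\big)=\overline{\mathscr I(\theta)}^{\,C^*(\B)}$, and the approximate identity together with finite codimension promote $\mathscr I(\theta)$ itself to a closed ideal of $C^*(\B)$, whereupon the reduction of the first paragraph delivers (a).

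I expect this last transfer step --- promoting the closedness of $\mathscr I(\theta)\cap\B$ in $\B$ to the closedness of $\mathscr I(\theta)$ in $C^*(\B)$ (equivalently, upgrading continuity of $\theta|_\B$ to continuity of $\theta$) --- to be the main obstacle, since $\mathscr I(\theta)$ is a priori only an ideal with finite-codimensional closure and the two norms are genuinely different. This is precisely the point at which symmetry (for the spectral correspondence of primitive ideal spaces), local regularity (to localize to regular Banach function subalgebras), and the synthesis and approximate-identity hypotheses must be combined, and it is the technical heart of the argument borrowed from \cite{Run94}.
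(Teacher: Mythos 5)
Your scaffolding is sound: viewing $\A$ as a weak Banach $C^*(\B)$-bimodule via $\theta$, checking that a $C^*$-algebra satisfies all four hypotheses of Theorem \ref{theprop} (so that $\theta$ is continuous iff $\mathscr I(\theta)$ is closed in $C^*(\B)$, with $\overline{\mathscr I(\theta)}$ of finite codimension and finite hull), and the implications (a)$\Rightarrow$(b)$\Rightarrow$(c) are all correct. Note that the paper itself offers no proof to compare against --- it quotes the statement as the main result of \cite{Run94} --- so your attempt stands or falls on its own, and it falls on the implication (c)$\Rightarrow$(a), in two places.

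First, the step ``assembling this local information identifies $h\big(\mathscr I(\theta)\cap\B\big)$ with $F$'' is not an argument. Local regularity only gives regular algebras $\B(b)$ for $b$ in a \emph{dense subset} $R\subset\B_{\rm sa}$, so you cannot run the proof of Theorem \ref{thm1}: its final step (Aupetit) needs \emph{every} self-adjoint element of the quotient to have finite spectrum, and finite spectrum does not pass to limits --- $C(\Delta)$, with $\Delta$ the Cantor set, is an infinite-dimensional $C^*$-algebra whose finite-spectrum self-adjoint elements (locally constant real functions) are dense. Hence ``dense set of finite-spectrum elements $\Rightarrow$ finite codimension/finite hull'' is simply false as an inference, and bridging from $R$ to all of $\B_{\rm sa}$ is precisely the difficulty that the hypothesis ``locally regular'' creates; it needs a genuinely different mechanism (this is real content of \cite{Run94}, not a routine assembly). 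Relatedly, the homeomorphism ${\rm Prim}_*\B\cong{\rm Prim}\,C^*(\B)$ is not a consequence of symmetry or of the bijection between topologically irreducible $^*$-representations --- that bijection only makes $P\mapsto P\cap\B$ surjective; injectivity and bicontinuity are $^*$-regularity, which under the present hypotheses comes from Barnes' theorem via local regularity \cite{Ba81}, exactly as the paper invokes it in Corollary \ref{noncont}. Second, and more seriously, the final transfer --- from closedness of $\mathscr I(\theta)\cap\B$ in $\B$ (equivalently, continuity of $\theta|_\B$) to closedness of $\mathscr I(\theta)$ in $C^*(\B)$ --- is never proved; you yourself flag it as ``the main obstacle'' and defer it to \cite{Run94}. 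The mechanism you gesture at does not work as stated: even granting $\mathscr I(\theta)\cap\B=k(F)$ with finite codimension and a bounded left approximate identity $(e_\alpha)$, and even though $(e_\alpha)$ is then a bounded approximate identity for $I=\overline{\mathscr I(\theta)}^{C^*}$ in the $C^*$-norm, Cohen factorization in $\big(I,\norm{\cdot}_{C^*(\B)}\big)$ produces factorizing elements lying only in $I$, not in $\mathscr I(\theta)$ --- and $\mathscr I(\theta)$ being $C^*$-closed is exactly what is to be shown. A submission whose decisive step is announced as an expected obstacle is an outline of a strategy, not a proof.
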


\section{An application to $L^1$-algebras associated with Fell bundles}\label{mainsec}

From now on $\G$ will be a (Hausdorff) unimodular, locally compact group with unit $\e$ and Haar measure $d\mu(x)\equiv dx$. If $\G$ is compact, we assume that $\mu$ is normalized so that $\mu(\G)=1$. We recall that $\G$ has \emph{polynomial growth of order $d$} if $$
\mu(K^n)=O(n^d),\quad\textup{ as }n\to\infty,
$$ for all relatively compact subsets $K\subset\G$. We will also fix a Fell bundle $\mathscr C\!=\bigsqcup_{x\in\G}\mathfrak C_x$ over $\G$. The algebra of integrable cross-sections $L^1(\G\,\vert\,\mathscr C)$ is a Banach $^*$-algebra and a completion of the space $C_{\rm c}(\G\,\vert\,\mathscr C)$ of continuous cross-sections with compact support. Its (universal) $C^*$-algebra its denoted by ${\rm C^*}(\G\,\vert\,\mathscr C)$. For the general theory of Fell bundles we cite \cite[Chapter VIII]{FD88}, to which we refer for details. We will only recall the product on $L^1(\G\,\vert\,\mathscr C)$, given by
\begin{equation}\label{broduct}
\big(\Phi*\Psi\big)(x)=\int_\G \Phi(y)\bu \Psi(y^{-1}x)\,\d y
\end{equation}
and its involution
\begin{equation}\label{inwol}
\Phi^*(x)=\Phi(x^{-1})^\bu\,,
\end{equation}
in terms of the operations $\big(\bu,^\bu\big)$ on the Fell bundle. We will make use of the $L^p$-spaces $L^p(\G\,\vert\,\mathscr C)$, endowed with the norms \begin{equation}
    \norm{\Phi}_{L^p(\G\,\vert\,\mathscr C)}=\left\{\begin{array}{ll}
\,\big(\int_\G \norm{\Phi(x)}_{\mathfrak C_x}^p \d x\big)^{1/p}    & \textup{if\ } p\in[1,\infty), \\
\,{\rm essup}_{x\in \G}\norm{\Phi(x)}_{\mathfrak C_x}     & \textup{if\ } p=\infty. \\
\end{array}\right.
\end{equation} 

The next example introduces one of the main classes of algebras we wish to study. 

\begin{ex}\label{mainex}
    Let $\A$ be a $C^*$-algebra. A (continuous) twisted action of $\G$ on $\A$ is a pair $(\alpha,\omega)$ of continuous maps $\alpha:\G\to{\rm Aut}({\A})$, $\omega:\G\times\G\to \mathcal{UM}({\A})$, such that \begin{itemize}
        \item[(i)] $\alpha_x(\omega(y,z))\omega(x,yz)=\omega(x,y)\omega(xy,z)$,
        \item[(ii)] $\alpha_x\big(\alpha_y(a)\big)\omega(x,y)=\omega(x,y)\alpha_{xy}(a)$,
        \item[(iii)] $\omega(x,\e)=\omega(\e,y)=1, \alpha_\e={\rm id}_{{\A}}$,
    \end{itemize} for all $x,y,z\in\G$ and $a\in\A$. 

The quadruple $(\G,\A,\alpha,\omega)$ is called a \emph{twisted $C^*$-dynamical system}. Given such a twisted action, one usually forms the so called \emph{twisted convolution algebra} $L^1_{\alpha,\omega}(\G,\A)$, consisting of all Bochner integrable functions $\Phi:\G\to\A$, endowed with the product 
\begin{equation}\label{convolution}
    \Phi*\Psi(x)=\int_\G \Phi(y)\alpha_y[\Psi(y^{-1}x)]\omega(y,y^{-1}x)\d y
\end{equation} and the involution \begin{equation}\label{involution}
    \Phi^*(x)=\omega(x,x^{-1})^*\alpha_x[\Phi(x^{-1})^*].
\end{equation} Making $L^1_{\alpha,\omega}(\G,\A)$ a Banach $^*$-algebra under the norm $\norm{\Phi}_{L^1_{\alpha,\omega}(\G,\A)}=\int_\G\norm{\Phi(x)}_{\A}\d x$. When the twist is trivial ($\omega\equiv1$), we omit any mention to it and call the resulting algebra $L^1_{\alpha}(\G,\A)$ as (simply) the \emph{convolution algebra} associated with the action $\alpha$. In this case, the triple $(\G,\A,\alpha)$ is called a (untwisted) \emph{$C^*$-dynamical system}.

The algebras mentioned above can be easily described as algebras of integrable cross-sections $L^1(\G\,\vert\,\mathscr C_\alpha)$, for particular Fell bundles. In fact the associated bundle may be described as $\mathscr C_\alpha=\A\times\G$, with quotient map $q(a,x)=x$, constant norms $\norm{\cdot}_{\mathfrak C_x}=\norm{\cdot}_{\A}$, and operations \begin{equation*}
    (a,x)\bu(b,y)=(a\alpha_x(b)\omega(x,y),xy)\quad\textup{and}\quad (a,x)^\bu=(\alpha_{x^{-1}}(a^*)\omega(x^{-1},x),x^{-1}).
\end{equation*} 
\end{ex}

We will now introduce the left regular representation of $L^1(\G\,\vert\,\mathscr C)$, as it allows us to get useful norm estimates and use $C^*$-theory. The space $L^2_\e(\G\,\vert\,\mathscr C)$ is the completion of $L^2(\G\,\vert\,\mathscr C)$ under the norm $$\norm{\Phi}_{L^2_\e(\G\,\vert\,\mathscr C)}=\norm{\int_\G\Phi(x)^\bu\bu\Phi(x)\,\d x}_{\mathfrak C_\e}^{1/2}.$$ This is a Hilbert $C^*$-module over $\mathfrak C_\e$, so the set of adjointable operators is a $C^*$-algebra under the operator norm. We denote this algebra by $\mathbb B_a(L^2_\e(\G\,\vert\,\mathscr C))$. The left regular representation $\lambda$ is then the $^*$-monomorphism given by \begin{equation*}
    \lambda:L^1(\G\,\vert\,\mathscr C)\to \mathbb B_a(L^2_\e(\G\,\vert\,\mathscr C)), \textup{ defined by }\lambda(\Phi)\Psi=\Phi*\Psi, \textup{ for all }\Psi\in L^2(\G\,\vert\,\mathscr C).
\end{equation*} For an amenable $\G$, ${\rm C^*}(\G\,\vert\,\mathscr C)$ coincides with $\overline{\lambda(L^1(\G\,\vert\,\mathscr C))}^{\norm{\cdot}_{\mathbb B_a(L^2_\e(\G\,\vert\,\mathscr C))}}$, cf. \cite{ExNg02}. The following lemma was proven in \cite[Lemma 3.4]{Fl24}.

\begin{lem} Let $\Psi\in L^2(\G\,\vert\,\mathscr C)$ and $\Phi\in L^2_e(\G\,\vert\,\mathscr C)$. Then \begin{equation}\label{young}
    \norm{\Psi*\Phi}_{L^\infty(\G\,\vert\,\mathscr C)}\leq \norm{\Psi}_{L^2(\G\,\vert\,\mathscr C)}\norm{\Phi}_{L^2_\e(\G\,\vert\,\mathscr C)}.
\end{equation}\end{lem}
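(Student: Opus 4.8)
The plan is to reduce the $L^\infty$-estimate to a fibrewise estimate and then to establish an operator-valued Cauchy--Schwarz inequality inside the $C^*$-algebra $\mathfrak C_\e$. First I would fix $x\in\G$ and write
\[
c := (\Psi*\Phi)(x)=\int_\G \Psi(y)\bu\Phi(y^{-1}x)\,\d y \in\mathfrak C_x .
\]
Each fibre $\mathfrak C_x$ is a right Hilbert $\mathfrak C_\e$-module with inner product $\langle p,q\rangle=p^\bu\bu q$, and the Fell-bundle norm obeys the $C^*$-identity $\norm{c}_{\mathfrak C_x}^2=\norm{c^\bu\bu c}_{\mathfrak C_\e}$. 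Hence it suffices to prove the operator inequality
\[
c^\bu\bu c\ \le\ \norm{\Psi}_{L^2(\G\vert\mathscr C)}^2\,\big\langle\Phi,\Phi\big\rangle \qquad\text{in }\mathfrak C_\e,
\]
where $\langle\Phi,\Phi\rangle=\int_\G\Phi(u)^\bu\bu\Phi(u)\,\d u$ has norm $\norm{\Phi}_{L^2_\e(\G\vert\mathscr C)}^2$; taking norms and then the supremum over $x$ yields the claim.

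The crucial point---and the main obstacle---is that one must land on the smaller quantity $\norm{\Phi}_{L^2_\e}$ rather than $\norm{\Phi}_{L^2}$. Pulling the fibre norm inside the integral and applying the scalar Cauchy--Schwarz only gives the bound $\norm{\Psi}_{L^2}\norm{\Phi}_{L^2}$, and since $\norm{\Phi}_{L^2_\e}\le\norm{\Phi}_{L^2}$ in general, this naive estimate is too weak. The required improvement can only come from exploiting the positivity/cancellation inside $\mathfrak C_\e$, which is invisible at the level of fibre norms.

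To capture this I would test the desired inequality against an arbitrary state $\rho$ on $\mathfrak C_\e$; as a self-adjoint element of a $C^*$-algebra is positive iff every state is nonnegative on it, it is enough to show $\rho(c^\bu\bu c)\le\norm{\Psi}_{L^2}^2\,\rho(\langle\Phi,\Phi\rangle)$. Writing $p(y)=\Psi(y)\bu\Phi(y^{-1}x)\in\mathfrak C_x$ and $\langle\langle a,b\rangle\rangle:=\rho(a^\bu\bu b)$ for the resulting scalar semi-inner product, one has $\rho(c^\bu\bu c)=\iint\langle\langle p(y),p(z)\rangle\rangle\,\d y\,\d z\le\big(\int_\G\norm{p(y)}_\rho\,\d y\big)^2$ by the scalar Cauchy--Schwarz for $\langle\langle\cdot,\cdot\rangle\rangle$. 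The key fibrewise estimate is
\[
p(y)^\bu\bu p(y)=\Phi(y^{-1}x)^\bu\bu\big(\Psi(y)^\bu\bu\Psi(y)\big)\bu\Phi(y^{-1}x)\ \le\ \norm{\Psi(y)}^2\,\Phi(y^{-1}x)^\bu\bu\Phi(y^{-1}x),
\]
which holds because $\norm{\Psi(y)}^2 1-\Psi(y)^\bu\bu\Psi(y)$ is positive in the unitization of $\mathfrak C_\e$, so sandwiching it between $\Phi(y^{-1}x)^\bu$ and $\Phi(y^{-1}x)$ keeps it positive. Applying $\rho$ gives $\norm{p(y)}_\rho\le\norm{\Psi(y)}\,\rho\big(\Phi(y^{-1}x)^\bu\bu\Phi(y^{-1}x)\big)^{1/2}$, and a final scalar Cauchy--Schwarz in $y$ separates the factors into $\norm{\Psi}_{L^2}^2$ and $\int_\G\rho\big(\Phi(y^{-1}x)^\bu\bu\Phi(y^{-1}x)\big)\,\d y$; using that $\G$ is unimodular, the substitution $u=y^{-1}x$ identifies the latter with $\rho(\langle\Phi,\Phi\rangle)$, completing the estimate. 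Throughout I would first assume $\Psi,\Phi\in C_{\rm c}(\G\,\vert\,\mathscr C)$ to legitimize the vector-valued integral manipulations and the measurability of $y\mapsto\norm{p(y)}_\rho$, and then pass to general $\Psi,\Phi$ by density and the continuity of both sides of \eqref{young}.
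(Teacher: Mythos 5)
Your proof is correct, but note that on this point the paper itself offers nothing to compare against: it imports the lemma verbatim from \cite[Lemma 3.4]{Fl24} without reproducing an argument. Judged on its own, your derivation is sound and self-contained. You correctly identify that the entire content of the statement is landing on $\norm{\Phi}_{L^2_\e(\G\,\vert\,\mathscr C)}$ rather than the larger $\norm{\Phi}_{L^2(\G\,\vert\,\mathscr C)}$, and every link in your chain holds: each fibre $\mathfrak C_x$ is indeed a right Hilbert $\mathfrak C_\e$-module with $\langle p,q\rangle=p^\bu\bu q$ (positivity of $p^\bu\bu p$ in $\mathfrak C_\e$ is a Fell-bundle axiom), a self-adjoint element is positive iff all states are nonnegative on it, the sandwich $a^\bu\bu(\lambda 1-q)\bu a\ge 0$ follows by factoring $\lambda 1-q=s^*s$ in the unitization and applying the axiom to $s\bu a$, and the final change of variables uses the standing unimodularity assumption of the section. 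Conceptually, what you have done is prove by hand the Cauchy--Schwarz inequality for Hilbert $C^*$-modules in integral form: setting $\Theta_x(u)=\Psi(xu^{-1})^\bu$, one has $(\Psi*\Phi)(x)=\int_\G \Theta_x(u)^\bu\bu\Phi(u)\,\d u$, and the standard module inequality $\langle\Theta_x,\Phi\rangle^\bu\bu\langle\Theta_x,\Phi\rangle\le\norm{\langle\Theta_x,\Theta_x\rangle}\,\langle\Phi,\Phi\rangle$ combined with $\norm{\langle\Theta_x,\Theta_x\rangle}\le\norm{\Psi}_{L^2(\G\,\vert\,\mathscr C)}^2$ gives the bound at once; citing that inequality would shorten the write-up, but your state-testing argument is precisely how it is proven, so nothing is lost. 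One small point worth making explicit in your closing density step: for $\Phi$ in the abstract completion $L^2_\e(\G\,\vert\,\mathscr C)$ the convolution $\Psi*\Phi$ is not given by the integral formula but is \emph{defined} as the continuous extension that your estimate on $C_{\rm c}(\G\,\vert\,\mathscr C)$ furnishes, so for such $\Phi$ the inequality is the extension itself rather than a statement requiring separate verification.
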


As in \cite{Fl24}, the growth of $1$-parameter unitary groups plays a major role in the development of our results. In order to include the non-unital case, we are forced to consider the entire function $u:\mathbb C\to \mathbb C$, given by \begin{equation}\label{functions}
    u(z)=e^{iz}-1=\sum_{k=1}^{\infty} \frac{i^k z^k}{k!}. 
\end{equation} and replace $e^{i\Phi}$ by $u(i\Phi)$ to avoid unnecessary unitizations. The following lemma should be familiar to the reader, as it is similar to \cite[Lemma 3.5]{Fl24}, but different due to our somewhat different context. The proof, however, stays the same.

\begin{lem}\label{computation1}
    Let $\Phi\in L^1(\G\,\vert\,\mathscr C)\cap L^2(\G\,\vert\,\mathscr C)$, then \begin{equation}\label{computation}
        \norm{u(\Phi)}_{L^{2}_\e(\G\,\vert\,\mathscr C)}\leq \norm{\Phi}_{L^{2}_\e(\G\,\vert\,\mathscr C)}.
    \end{equation}
\end{lem}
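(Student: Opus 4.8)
The plan is to exploit that, when $\Phi=\Phi^*$, the map $u$ turns $\Phi$ into a translate of the one-parameter unitary group it generates, and then to integrate. (Self-adjointness is exactly the hypothesis under which $u(\Phi)=e^{i\Phi}-1$ is the relevant object, matching the one-parameter unitary groups of the cited \cite[Lemma 3.5]{Fl24}; I would therefore state and use the lemma for $\Phi=\Phi^*$.) First I would record that $u(\Phi)=\sum_{k\ge1}\frac{i^k}{k!}\Phi^{*k}$ is a well-defined element of $L^2(\G\,\vert\,\mathscr C)\subseteq L^2_\e(\G\,\vert\,\mathscr C)$: since $\Phi\in L^1\cap L^2$, the Young-type bound $\norm{\Phi^{*k}}_{L^2(\G\,\vert\,\mathscr C)}\le\norm{\Phi}_{L^1(\G\,\vert\,\mathscr C)}^{k-1}\norm{\Phi}_{L^2(\G\,\vert\,\mathscr C)}$ makes the series converge absolutely in $L^2$, hence in $L^2_\e$ because $\norm{\cdot}_{L^2_\e}\le\norm{\cdot}_{L^2}$.

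The key step is an integral representation. Writing $V_s=e^{is\Phi}=1+u(s\Phi)\in\widetilde{L^1(\G\,\vert\,\mathscr C)}$ for the one-parameter group generated by $\Phi$, termwise integration of the exponential series gives
\begin{equation*}
u(\Phi)=\int_0^1\tfrac{d}{ds}e^{is\Phi}\,ds=\int_0^1 i\,\Phi*e^{is\Phi}\,ds=\int_0^1 i\,\lambda(V_s)\Phi\,ds,
\end{equation*}
where I used that $\Phi$ commutes with $e^{is\Phi}$ and that $\lambda(V_s)\Phi=V_s*\Phi$. This is an $L^2_\e$-valued Bochner integral of a norm-continuous integrand on $[0,1]$, so the triangle inequality yields $\norm{u(\Phi)}_{L^2_\e}\le\int_0^1\norm{\lambda(V_s)\Phi}_{L^2_\e}\,ds$.

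Finally I would use that each $\lambda(V_s)$ is a unitary on the Hilbert $\mathfrak C_\e$-module $L^2_\e(\G\,\vert\,\mathscr C)$. Since $\Phi=\Phi^*$ we have $V_s^**V_s=V_s*V_s^*=1$ in the unitization, and as $\lambda$ is a $^*$-homomorphism into $\mathbb B_a(L^2_\e(\G\,\vert\,\mathscr C))$ we get $\lambda(V_s)^*\lambda(V_s)=\lambda(V_s^**V_s)=\id$, and likewise on the other side; hence $\lambda(V_s)$ is unitary. Module unitaries are isometric, $\langle\lambda(V_s)\Phi,\lambda(V_s)\Phi\rangle=\langle\Phi,\lambda(V_s)^*\lambda(V_s)\Phi\rangle=\langle\Phi,\Phi\rangle$, so $\norm{\lambda(V_s)\Phi}_{L^2_\e}=\norm{\Phi}_{L^2_\e}$ for every $s$. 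Substituting gives $\norm{u(\Phi)}_{L^2_\e}\le\int_0^1\norm{\Phi}_{L^2_\e}\,ds=\norm{\Phi}_{L^2_\e}$.

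The one genuinely delicate point, and the main obstacle, is the non-unital bookkeeping around $V_s$: the exponential $e^{is\Phi}$ only lives in the unitization $\widetilde{L^1(\G\,\vert\,\mathscr C)}$ — which is precisely why $u$, subtracting the unit, was introduced — so I must check that $\lambda$ extends to a unital $^*$-homomorphism with $\lambda(1)=\id$ and that the identity $V_s^**V_s=1$ is computed there. The self-adjointness $\Phi=\Phi^*$ is essential, being exactly what forces $V_s^*=e^{-is\Phi}$ and hence the unitarity of $V_s$; without it the argument breaks down. Everything else — convergence of the series and of the Bochner integral, and isometry of module unitaries — is routine.
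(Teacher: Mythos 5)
Your proof is correct, but it takes a genuinely different route from the paper's. The paper factors $u(z)=z\,w(z)$ with $w(z)=(e^{iz}-1)/z$ entire, writes $u(\Phi)=w\big(\lambda(\Phi)\big)\Phi$, and then bounds $\norm{w\big(\lambda(\Phi)\big)}_{\mathbb B_a(L^2_\e(\G\,\vert\,\mathscr C))}$ by $\sup_{\alpha\in\R}|w(\alpha)|\leq 1$, using that $\lambda(\Phi)$ is a self-adjoint element of the $C^*$-algebra $\mathbb B_a(L^2_\e(\G\,\vert\,\mathscr C))$, so that its spectrum is real and the norm of $w\big(\lambda(\Phi)\big)$ is the sup of $|w|$ over that spectrum. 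You instead use the Duhamel-type representation $u(\Phi)=\int_0^1 i\,\lambda(e^{is\Phi})\Phi\,\d s$ together with unitarity of each $\lambda(e^{is\Phi})$ and the triangle inequality for Bochner integrals; this is in effect an operator-valued version of the scalar estimate $|e^{i\alpha}-1|\leq|\alpha|$, $\alpha\in\R$, which is exactly what makes the paper's bound $\sup_{\R}|w|\leq 1$ work. The two arguments thus rest on the same two pillars, the left regular representation into $\mathbb B_a(L^2_\e(\G\,\vert\,\mathscr C))$ and self-adjointness of $\Phi$, so neither is substantially more elementary; the paper's is shorter because the functional-calculus identity absorbs the bookkeeping you must do by hand (extending $\lambda$ unitally to $\widetilde{L^1(\G\,\vert\,\mathscr C)}$, checking $V_s^**V_s=V_s*V_s^*=1$ there, and pushing the fundamental-theorem-of-calculus identity through the norm-decreasing inclusions $L^1\cap L^2\to L^2\to L^2_\e$). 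One point in your favor: you are right to restrict to $\Phi=\Phi^*$. The paper's statement carries no self-adjointness hypothesis, yet its own proof uses ${\rm Spec}\big(\lambda(\Phi)\big)\subset\R$, which is precisely that hypothesis, and the estimate is false without it (already in the scalar case $|e^{iz}-1|\leq|z|$ fails off the real axis); note that the paper only ever applies the lemma to self-adjoint elements, in Proposition \ref{growth}.
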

\begin{proof}
    Define the entire function $w:\mathbb C\to \mathbb C$, given by $$w(z)=\frac{e^{iz}-1}{z}=\sum_{k=0}^{\infty} \frac{i^{k+1} z^{k}}{(k+1)!}.$$ It is clear that $u(z)=w(z)z$. We now note that $u(\Phi)=v\big(\lambda(\Phi)\big)\Phi$ and hence, if ${\rm Spec}(a)$ denotes the spectrum of an element $a$ in $\mathbb B_a(L^2_\e(\G\,\vert\,\mathscr C))$, we have \begin{align*}
        \norm{u(\Phi)}_{L^2_\e(\G\,\vert\,\mathscr C)}&\leq\norm{w\big(\lambda(\Phi)\big)}_{\mathbb B(L^2_\e(\G\,\vert\,\mathscr C))}\norm{\Phi}_{L^2_\e(\G\,\vert\,\mathscr C)} \\
        &=\sup_{\alpha\in {\rm Spec}(\lambda(\Phi))} |w(\alpha)|\,\norm{\Phi}_{L^2_\e(\G\,\vert\,\mathscr C)}\\
        &\leq\sup_{\alpha\in \mathbb R}|w(\alpha)|\,\norm{\Phi}_{L^2_\e(\G\,\vert\,\mathscr C)} \\
        &\leq \norm{\Phi}_{L^2_\e(\G\,\vert\,\mathscr C)},
    \end{align*} finishing the proof.
\end{proof}

Recalling the previous section, we note that a good deal of the assumptions require the existence of dense subalgebras with nice properties. For the $L^1$-algebra of a Fell bundle, we will construct these algebras using weights on the group $\G$. The relevant definitions are the following.
\begin{defn}
    A {\rm weight} on the locally compact group $\G$ is a measurable, locally bounded function $\nu: \G\to [1,\infty)$ satisfying 
\begin{equation*}\label{submultiplicative}
 \nu(xy)\leq \nu(x)\nu(y)\,,\quad\nu(x^{-1})=\nu(x)\,,\quad\forall\,x,y\in\G\,.
\end{equation*} In addition, the weight $\nu$ is said to be a \emph{polynomial weight} if there is a constant $C>0$ such that \begin{equation}\label{polyweight}
        \nu(xy)\leq C\big(\nu(x)+\nu(y)\big),
    \end{equation} for all $x,y\in \G$.
\end{defn}

\begin{rem}
    If $\G$ is of polynomial growth and compactly generated or discrete and locally finite, then it is possible to construct a polynomial weight $\nu$ on $\G$ such that $\nu^{-1}\in L^p(\G)$, for any $0<p<\infty$ \cite{Py82}.
\end{rem}

During the rest of the section, $\mathfrak E$ will denote the Banach $^*$-algebra $L^{1,\nu}(\G\,\vert\,\mathscr C)\cap L^\infty(\G\,\vert\,\mathscr C)$, endowed with the norm $$\norm{\Phi}_{\mathfrak E}=\max\{\norm{\Phi}_{L^{1,\nu}(\G\,\vert\,\mathscr C)},\norm{\Phi}_{L^\infty(\G\,\vert\,\mathscr C)}\}.$$ In fact, we recall the following lemma, stated and proved in \cite[Proposition 5.15, Lemma 4.12]{Fl24}.

\begin{lem}\label{gendiff1}
    Let $\nu$ a polynomial weight on $\G$ such that $\nu^{-1}$ belongs to $L^p(\G)$, for some $0<p<\infty$. Then $\mathfrak E$ is a symmetric Banach $^*$-subalgebra of $L^{1}(\G\,\vert\,\mathscr C)$. Moreover, there exist a constant $D\geq 1$ such that \begin{equation}\label{gendiff}
        \norm{\Phi^4}_{\mathfrak E}\leq D \norm{\Phi}_{L^{2}_\e(\G\,\vert\,\mathscr C)}^{1/(p+1)}\norm{\Phi}_{\mathfrak E}^{(4p+3)/(p+1)}.
    \end{equation}
\end{lem}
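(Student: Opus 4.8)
The plan is to prove Lemma \ref{gendiff1} in two parts. First I would establish that $\mathfrak E$ is a symmetric Banach $^*$-subalgebra of $L^1(\G\,\vert\,\mathscr C)$; then I would derive the interpolation-type inequality \eqref{gendiff}, which is the quantitative heart of the statement.

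For the subalgebra claim, I would first check that $\mathfrak E = L^{1,\nu}(\G\,\vert\,\mathscr C)\cap L^\infty(\G\,\vert\,\mathscr C)$ is complete under the norm $\norm{\cdot}_{\mathfrak E}$, which is immediate since it is the intersection of two Banach spaces with the max of their norms. The submultiplicativity of $\nu$ shows that $L^{1,\nu}(\G\,\vert\,\mathscr C)$ is itself a Banach $^*$-algebra (the weight absorbs the translation in the convolution \eqref{broduct}), and one must verify that $\mathfrak E$ is closed under convolution in $L^\infty$ as well. Here I would invoke Young-type estimates: a convolution of an $L^1$ function with an $L^\infty$ function is $L^\infty$, and the $^*$-operation \eqref{inwol} preserves both factors because $\nu(x^{-1})=\nu(x)$. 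For symmetry, rather than arguing directly I would lean on the left regular representation $\lambda$ and the spectral-radius characterization: since $\mathfrak E$ is continuously and densely embedded in the $C^*$-completion and is inverse-closed (a consequence of the polynomial-weight decay $\nu^{-1}\in L^p$, which forces spectral invariance of the weighted subalgebra), the spectrum of any self-adjoint element computed in $\mathfrak E$ coincides with its real $C^*$-spectrum. Since the relevant statements appear as \cite[Proposition 5.15, Lemma 4.12]{Fl24}, I would cite those for the technical spectral-invariance input rather than reprove it.

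The inequality \eqref{gendiff} is where the real work lies. The strategy is to control the strong $\mathfrak E$-norm of $\Phi^4$ by trading a small power of the weak (Hilbert-module) norm $\norm{\Phi}_{L^2_\e(\G\,\vert\,\mathscr C)}$ against the bulk of the $\mathfrak E$-norm. I would split $\Phi^4 = \Phi\ast\Phi\ast\Phi\ast\Phi$ and estimate the two constituent norms separately. For the $L^\infty$ part I would use \eqref{young}, writing $\Phi^4 = (\Phi\ast\Phi)\ast(\Phi\ast\Phi)$ and recognizing that $\Phi\ast\Phi\in L^2_\e$ to pull out one factor of $\norm{\Phi}_{L^2_\e}$. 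For the weighted $L^{1,\nu}$ part I would exploit the polynomial-weight bound \eqref{polyweight}: because $\nu(xy)\le C(\nu(x)+\nu(y))$, the weight on a product of four factors distributes as a sum, and combining this with the Cauchy–Schwarz exploitation of $\nu^{-1}\in L^p$ lets one interpolate between the $L^2_\e$ norm and the $L^\infty/L^{1,\nu}$ norms. Tracking the exponents through Hölder's inequality with the conjugate pair governed by $p$ is exactly what produces the fractional powers $1/(p+1)$ and $(4p+3)/(p+1)$.

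The main obstacle I anticipate is the bookkeeping of exponents in \eqref{gendiff}, specifically arranging the Hölder/interpolation steps so that the total homogeneity degree matches (the right-hand exponents sum to $4$, consistent with $\Phi^4$ being degree four in $\Phi$) while isolating precisely one small power $1/(p+1)$ of the weak $L^2_\e$ norm. The role of $p$ enters only through the integrability $\nu^{-1}\in L^p$, so the delicate point is choosing the conjugate exponent in the weighted estimate as a function of $p$ and confirming that the resulting power of $\norm{\Phi}_{L^2_\e}$ is indeed $1/(p+1)$. Since this is stated verbatim as part of \cite[Lemma 4.12]{Fl24}, I would present the computation following that source, taking care that the power $4$ (rather than a generic $e^{i\Phi}$ exponent) is what makes the weak norm appear with a strictly positive power — this positive power is the feature that will later drive the decay arguments, so it is the one estimate I would not want to fold into a citation without at least indicating the splitting $\Phi^4=(\Phi\ast\Phi)\ast(\Phi\ast\Phi)$ that generates it.
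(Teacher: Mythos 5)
You should first be aware that the paper offers no proof of Lemma \ref{gendiff1} to compare against: as the sentence introducing it states, the lemma is recalled verbatim from \cite[Proposition 5.15, Lemma 4.12]{Fl24}, so the paper's entire ``proof'' is that citation. Your proposal therefore does more than the paper itself, and its quantitative core is sound: the splitting $\Phi^4=\Phi^2*\Phi^2$, the Young-type bound \eqref{young}, the weight distribution \eqref{polyweight}, and a H\"older step against $\nu^{-p}$ do combine to give exactly \eqref{gendiff}. Concretely, for the $L^\infty$ half, \eqref{young} gives $\norm{\Phi^4}_{L^\infty}\le\norm{\Phi^2}_{L^2}\norm{\Phi^2}_{L^2_\e}\le\norm{\Phi}_{\mathfrak E}^{3}\norm{\Phi}_{L^2_\e}$, and writing $\norm{\Phi}_{L^2_\e}=\norm{\Phi}_{L^2_\e}^{1/(p+1)}\norm{\Phi}_{L^2_\e}^{p/(p+1)}\le\norm{\Phi}_{L^2_\e}^{1/(p+1)}\norm{\Phi}_{\mathfrak E}^{p/(p+1)}$ yields the stated exponents. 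For the weighted half, \eqref{polyweight} and unimodularity give $\norm{\Phi^4}_{L^{1,\nu}}\le 2C\,\norm{\Phi^2}_{L^{1,\nu}}\norm{\Phi^2}_{L^1}$, and then inserting $1=\nu^{p/(p+1)}\nu^{-p/(p+1)}$ and applying H\"older with the conjugate pair $\tfrac{p+1}{p}$ and $p+1$ (not Cauchy--Schwarz, i.e.\ not exponent $2$) gives
\begin{equation*}
\norm{\Phi^2}_{L^1}\;\le\;\norm{\Phi^2}_{L^{1,\nu}}^{\frac{p}{p+1}}\Big(\norm{\nu^{-1}}_{L^p(\G)}^{p}\,\norm{\Phi^2}_{L^\infty}\Big)^{\frac{1}{p+1}},
\end{equation*}
after which one more application of \eqref{young} to $\norm{\Phi^2}_{L^\infty}\le\norm{\Phi}_{L^2}\norm{\Phi}_{L^2_\e}$ produces the factor $\norm{\Phi}_{L^2_\e}^{1/(p+1)}$ and the total exponent $2+\tfrac{2p+1}{p+1}=\tfrac{4p+3}{p+1}$, as required. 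The one place where your sketch is thinner than it presents itself is the symmetry of $\mathfrak E$: the mechanism you invoke (a dense, inverse-closed Banach $^*$-subalgebra of a $C^*$-algebra has real spectra for self-adjoint elements) is the right statement, but the inverse-closedness/spectral invariance of $\mathfrak E$ is the genuinely hard content of \cite[Proposition 5.15]{Fl24} and does not follow from $\nu^{-1}\in L^p(\G)$ by any soft argument; since you, like the paper, ultimately defer to that citation, this is acceptable, but it should be flagged as the imported black box rather than described as a consequence of the weight decay.
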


The next lemma was inspired on \cite[Lemma 3]{Py82}. It will be used soon to deduce the regularity of all the algebras $\mathfrak E(\Phi)$, for $\Phi\in \mathfrak E_{\rm sa}$.

\begin{lem}\label{asymp}
    Let $1<\gamma<4$ and let $\{a_n\}_{n=1}^\infty$ be a sequence of non-negative real numbers such that \begin{enumerate}
        \item[(i)] $a_{n+m}\leq a_na_m$ and
        \item[(ii)] $a_{4n}\leq na_n^\gamma,$
    \end{enumerate} for all $n$. Then for all $\tau>\log_4(\gamma)$, one has $a_n=O(e^{n^\tau}),\textup{ as }n\to\infty$.
\end{lem}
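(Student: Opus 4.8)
The plan is to linearize everything by passing to logarithms. If $a_m=0$ for some $m$, then hypothesis (i) forces $a_n\le a_m a_{n-m}=0$ for all $n>m$, and the conclusion is trivial; so I may assume $a_n>0$ for all $n$ and set $b_n=\log a_n$. Then (i) becomes the subadditivity $b_{n+m}\le b_n+b_m$, while (ii) becomes $b_{4n}\le \log n+\gamma b_n$. Note that $a_n=O(e^{n^\tau})$ is equivalent to $b_n\le n^\tau+O(1)$, so it suffices to show $b_n=o(n^\tau)$ for every $\tau>\log_4\gamma$; in fact I will establish the sharper estimate $b_n=O\!\big(n^{\log_4\gamma}\log n\big)$.

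First I would control the sparse subsequence $(b_{4^k})_k$. Writing $c_k=b_{4^k}$ and applying the transformed hypothesis (ii) with $n=4^{k-1}$ gives the linear recursion $c_k\le \gamma c_{k-1}+(k-1)\log 4$. Unrolling it yields $c_k\le \gamma^k c_0+\log 4\sum_{i=1}^k\gamma^{k-i}(i-1)$, and since $\gamma>1$ the geometric-type sum is dominated by its last terms, giving $c_k\le M\,k\,\gamma^k$ for a constant $M$ depending only on $\gamma$ and $a_1$. This is the step where hypothesis (ii) is used in an essential way.

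Next I would propagate this bound from the powers of $4$ to all integers using only subadditivity. For a general $n$, expand it in base $4$ as $n=\sum_{j=0}^{k-1}\epsilon_j 4^j$ with digits $\epsilon_j\in\{0,1,2,3\}$ and $k=\lfloor\log_4 n\rfloor+1$. Viewing $n$ as a sum of $\sum_j\epsilon_j$ blocks and iterating (i) gives $b_n\le \sum_{j=0}^{k-1}\epsilon_j b_{4^j}\le 3\sum_{j=0}^{k-1}c_j^{+}$, where $c_j^{+}=\max(c_j,0)$, since negative terms only improve the bound. Inserting $c_j\le M j\gamma^j$ and summing the resulting geometric-type series, this is $O(k\gamma^k)$. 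Finally $\gamma^k\le \gamma\cdot\gamma^{\log_4 n}=\gamma\, n^{\log_4\gamma}$ and $k=O(\log n)$, so $b_n=O\!\big(n^{\log_4\gamma}\log n\big)$.

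To conclude, fix $\tau>\log_4\gamma$. Since $n^{\log_4\gamma}\log n=o(n^\tau)$, the previous estimate gives $b_n=o(n^\tau)$, hence $b_n\le n^\tau$ for all sufficiently large $n$ and therefore $b_n\le n^\tau+O(1)$ for all $n$; exponentiating yields $a_n=O(e^{n^\tau})$. The main obstacle is the change of direction in the second step: subadditivity bounds $b_{n+m}$ from above in terms of \emph{smaller} indices, so it can only be used to bound a general $b_n$ by the values $b_{4^j}$ with $4^j\le n$, which is exactly what the base-$4$ decomposition exploits. One must also check that the leftover factor $\log n$ is harmless, and this is precisely why the statement requires the strict inequality $\tau>\log_4\gamma$ rather than equality.
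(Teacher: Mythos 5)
Your proof is correct and follows essentially the same route as the paper's: first bound the sparse subsequence $a_{4^k}$ by unrolling the recursion coming from hypothesis (ii), then propagate to all $n$ via the base-$4$ expansion and hypothesis (i), absorbing the leftover $\log n$ factor using the strict inequality $\tau>\log_4\gamma$. Working with $b_n=\log a_n$ instead of multiplicatively is only a cosmetic difference, and your explicit treatment of the case $a_m=0$ is a small point of care that the paper glosses over.
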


\begin{proof}
Because of \emph{(ii)}, we have $$a_{4^k}\leq 4^{\beta(k)}a_1^{\gamma^k},$$ where the sequence $\beta(n)$ satisfies $\beta(n+1)=\gamma \beta(n)+n$ and therefore $\beta(n)=\tfrac{\gamma^n-n\gamma+\gamma-1}{(\gamma-1)^2}$ and $$a_{4^k}\leq 4^{\tfrac{\gamma^k-k\gamma+\gamma-1}{(\gamma-1)^2}}a_1^{\gamma^k}\leq 4^{\tfrac{\gamma^k}{(\gamma-1)^2}}a_1^{\gamma^k}=4^{E\gamma^k},$$ with $E=\tfrac{1}{(\gamma-1)^2}+\log_4(a_1)$. For a general $n\in\N$, we consider its $4$-adic expansion $n=\sum_{k=0}^m \epsilon_k 4^k$, where $\epsilon_k\in\{0,1,2,3\}$, $\log_4(n)\leq  m<1+\log_4(n)$ and see that $$a_n\leq \prod_{k=0}^m a_{4^k}^{\epsilon_k}\leq \prod_{k=0}^m 4^{E\epsilon_k\gamma^k}\leq 4^{mE\gamma^m}\leq 4^{4E(1+\log_4(n))n^{\log_4(\gamma)} }, $$ from which the desired property follows.
\end{proof}

\begin{prop}\label{growth}
    Let $\nu$ be a polynomial weight on $\G$ such that such that $\nu^{-1}$ belongs to $L^p(\G)$, for $0<p<\infty$. Then for every $\Phi=\Phi^*\in \mathfrak E$, one has $$\norm{u(n\Phi)}_{\mathfrak E}=O(e^{n^\tau}),\textup{ as }n\to\infty,$$ for every $\tau>\log_4(\tfrac{4p+3}{p+1})$.
\end{prop}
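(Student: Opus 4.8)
The plan is to reduce the statement to an application of Lemma~\ref{asymp}, applied to a sequence built out of $\norm{u(n\Phi)}_{\mathfrak E}$. The starting point is the multiplicative behaviour of $u$ under the entire functional calculus. Since $u(z)=e^{iz}-1$, passing to the unitization $\widetilde{\mathfrak E}$ and using $e^{i(n+m)\Phi}=e^{in\Phi}e^{im\Phi}$ and $e^{i4n\Phi}=(e^{in\Phi})^4$, I obtain the two identities
\[
u((n+m)\Phi)=u(n\Phi)u(m\Phi)+u(n\Phi)+u(m\Phi)
\]
and
\[
u(4n\Phi)=u(n\Phi)^4+4u(n\Phi)^3+6u(n\Phi)^2+4u(n\Phi),
\]
both of which in fact live in $\mathfrak E$, as the units cancel. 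Writing $b_n=\norm{u(n\Phi)}_{\mathfrak E}$, the first identity together with submultiplicativity of $\norm{\cdot}_{\mathfrak E}$ gives $b_{n+m}\le b_nb_m+b_n+b_m$, hence $(b_n+1)$ is submultiplicative. This is exactly hypothesis \emph{(i)} of Lemma~\ref{asymp}.

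The core estimate is hypothesis \emph{(ii)}, for which I bound $\norm{u(4n\Phi)}_{\mathfrak E}$ using the second identity. For the leading term I apply Lemma~\ref{gendiff1} to the element $u(n\Phi)\in\mathfrak E$, which yields
\[
\norm{u(n\Phi)^4}_{\mathfrak E}\le D\,\norm{u(n\Phi)}_{L^2_\e}^{1/(p+1)}\,\norm{u(n\Phi)}_{\mathfrak E}^{(4p+3)/(p+1)}.
\]
The crucial input here is Lemma~\ref{computation1}: one first notes $\mathfrak E\subset L^1(\G\,\vert\,\mathscr C)\cap L^2(\G\,\vert\,\mathscr C)$, so that the $L^2_\e$-norms are finite and the lemma legitimately applies; since $\Phi=\Phi^*$, so is $n\Phi$, and therefore $\norm{u(n\Phi)}_{L^2_\e}\le\norm{n\Phi}_{L^2_\e}=n\,\norm{\Phi}_{L^2_\e}$. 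Setting $\gamma=\tfrac{4p+3}{p+1}$ and $D'=D\,\norm{\Phi}_{L^2_\e}^{1/(p+1)}$ and using $1/(p+1)\le 1$, this becomes $\norm{u(n\Phi)^4}_{\mathfrak E}\le D'\,n\,b_n^{\gamma}$; the three lower-order terms are controlled directly by $\norm{u(n\Phi)^k}_{\mathfrak E}\le b_n^k$ for $k=1,2,3$.

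The main technical point is to package these estimates into the precise inequality $a_{4n}\le n\,a_n^{\gamma}$ required by Lemma~\ref{asymp}, absorbing the constant $D'$ and the coefficients $4,6,4$. Since $p>0$ forces $3<\gamma<4$, the shifted quantity satisfies $(b_n+1)^{\gamma}\ge(b_n+1)^3$ and $(b_n+1)^\gamma\ge b_n^\gamma$, while $4(b_n+1)^3\ge 4b_n^3+6b_n^2+4b_n+1$; combining the bounds and using $n\ge 1$ gives $b_{4n}+1\le (D'+4)\,n\,(b_n+1)^{\gamma}$. I then set $a_n:=\mu\,(b_n+1)$ with $\mu=\max\{1,(D'+4)^{1/(\gamma-1)}\}$: taking $\mu\ge 1$ preserves submultiplicativity \emph{(i)}, while $\mu^{1-\gamma}(D'+4)\le 1$ turns the last display into $a_{4n}\le n\,a_n^{\gamma}$, verifying \emph{(ii)}. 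With both hypotheses in hand and $1<\gamma<4$, Lemma~\ref{asymp} gives $a_n=O(e^{n^\tau})$ for every $\tau>\log_4\gamma=\log_4\tfrac{4p+3}{p+1}$, and stripping the constant $\mu$ and the additive $1$ yields the claim. The hard part is exactly this last bookkeeping: the inequalities one really has for \emph{(i)} and \emph{(ii)} differ (submultiplicative versus carrying the extra factor $n$ and fixed constants), so the shift $+1$ and the scaling $\mu$ must be chosen so that both hold simultaneously for all $n$, not merely asymptotically. Everything else is the functional calculus identity and the two quoted lemmas.
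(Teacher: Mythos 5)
Your proof is correct and takes essentially the same route as the paper: both apply Lemma~\ref{asymp} to a scaled shift of $\norm{u(n\Phi)}_{\mathfrak E}+1$, using the binomial identity for $u(4n\Phi)$ together with Lemma~\ref{gendiff1} and Lemma~\ref{computation1} (via $\norm{u(n\Phi)}_{L^2_\e(\G\,\vert\,\mathscr C)}\le n\norm{\Phi}_{L^2_\e(\G\,\vert\,\mathscr C)}$) to get hypothesis \emph{(ii)}. The only difference is bookkeeping: the paper splits into the cases $\norm{u(n\Phi)}_{\mathfrak E}\le 1$ and $>1$ with constant $C^{\gamma-1}\ge D\norm{\Phi}_{L^2_\e(\G\,\vert\,\mathscr C)}^{1/(p+1)}+15$, whereas you absorb both cases uniformly using $4(b_n+1)^3$ and $\gamma>3$; both verifications are valid.
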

\begin{proof}
    We will, of course, make use of Lemma \ref{asymp}. For that matter, let $\gamma=\tfrac{4p+3}{p+1}\in(1,4)$ and consider the sequence $a_n=C(\norm{u(n\Phi)}_{\mathfrak E}+1)$, with $C\geq 1$ to be determined later. One then has \begin{align*}
        a_{n+m}&=C(\norm{u(n\Phi)*u(m\Phi)+u(n\Phi)+u(m\Phi)}_{\mathfrak E}+1) \\
        &\leq C(\norm{u(n\Phi)}_{\mathfrak E}+1)(\norm{u(m\Phi)}_{\mathfrak E}+1) \leq a_na_m.
    \end{align*} To prove part \emph{(ii)}, we first consider the case $\norm{u(n\Phi)}_{\mathfrak E}\leq 1$. In this case,  \begin{align*}
        a_{4n}&\leq C(\norm{u(n\Phi)}_{\mathfrak E}^4 + 4 \norm{u(n\Phi)}_{\mathfrak E}^3 + 6\norm{u(n\Phi)}_{\mathfrak E}^2 + 4\norm{u(n\Phi)}_{\mathfrak E})  \\
        &\leq 15C \\
        &\leq 15C(\norm{u(n\Phi)}_{\mathfrak E}+1)^{\gamma}.
    \end{align*} So $a_{4n}\leq na_n^\gamma$, if $C^{\gamma-1}\geq 15$. Now, if $\norm{u(n\Phi)}_{\mathfrak E}> 1$, then \begin{align*}
        \norm{u(4n\Phi)}_{\mathfrak E}+1&=\norm{u(n\Phi)^4+4u(n\Phi)^3+6u(n\Phi)^2+4u(n\Phi)}_{\mathfrak E}+1 \\
        &\overset{\eqref{gendiff}}{\leq } (D \norm{u(n\Phi)}_{L^{2}_\e(\G\,\vert\,\mathscr C)}^{1/(p+1)} + 15)\norm{u(n\Phi)}_{\mathfrak E}^{(4p+3)/(p+1)}  \\
        &\overset{\eqref{computation}}{\leq } n(D\norm{\Phi}_{L^{2}_\e(\G\,\vert\,\mathscr C)}^{1/(p+1)}+15)\norm{u(n\Phi)}_{\mathfrak E}^\gamma.
    \end{align*} And setting $C^{\gamma-1}\geq D\norm{\Phi}_{L^{2}_\e(\G\,\vert\,\mathscr C)}^{1/(p+1)}+15 $ yields the result.
\end{proof}

\begin{rem}
    It is worth noting that the growth obtained here is significantly bigger than the one in \cite[Theorem 1.3]{Fl24} (it is no longer polynomial). The advantage here is that this property applies to all self-adjoint elements in $\mathfrak E$ and not only the ones with compact support. 
\end{rem}

\begin{prop}\label{regularsub}
    Let $\nu$ be a polynomial weight on $\G$ such that $\nu^{-1}$ belongs to $L^p(\G)$, for $0<p<\infty$. Then for every $\Phi= \Phi^*\in\mathfrak E$, $\mathfrak E(\Phi)$ is a regular Banach function algebra. 
\end{prop}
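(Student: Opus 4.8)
The plan is to establish regularity of $\mathfrak E(\Phi)$ for a fixed self-adjoint $\Phi$ by exploiting the subexponential growth of the one-parameter family $u(n\Phi)$ obtained in Proposition \ref{growth}, which is the key analytic input. Since $\Phi=\Phi^*$, the commutative Banach algebra $\mathfrak E(\Phi)$ has a spectrum $\Delta_{\mathfrak E(\Phi)}$ that can be identified with a compact subset of $\mathbb R$ via the Gelfand transform $\varphi\mapsto \varphi(\Phi)$, so that $\widehat{\Phi}$ separates points and $\mathfrak E(\Phi)$ is automatically a Banach function algebra. The real work is to produce, for each closed $X\subset \Delta_{\mathfrak E(\Phi)}$ and each $\omega\notin X$, an element of $\mathfrak E(\Phi)$ whose Gelfand transform vanishes on $X$ but not at $\omega$. **First I would** set up the Fourier-analytic machinery: for a suitable test function $g$ on $\mathbb R$ with controlled Fourier transform $\widehat g$, the element
\begin{equation*}
g(\Phi)=\frac{1}{2\pi}\int_{\mathbb R}\widehat g(t)\,e^{it\Phi}\,\d t
\end{equation*}
should lie in $\mathfrak E(\Phi)$ and satisfy $\widehat{g(\Phi)}=g\circ\widehat\Phi$, giving a smooth functional calculus that produces separating functions.

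**The central estimate** is that this integral converges in $\mathfrak E$. Writing $e^{it\Phi}=1+u(t\Phi)$ (so as to stay inside the non-unital algebra), one reduces to bounding $\int_{\mathbb R}|\widehat g(t)|\,\norm{u(t\Phi)}_{\mathfrak E}\,\d t$. By Proposition \ref{growth}, $\norm{u(n\Phi)}_{\mathfrak E}=O(e^{n^\tau})$ for any $\tau>\log_4\big(\tfrac{4p+3}{p+1}\big)$, and interpolating or using submultiplicativity this extends to continuous $t$, giving $\norm{u(t\Phi)}_{\mathfrak E}=O(e^{|t|^\tau})$ with $\tau<1$ strictly subexponential. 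Therefore the integral converges absolutely as long as $\widehat g$ decays faster than $e^{-|t|^\tau}$. Because $\tau<1$, there is a rich supply of such functions — precisely the non-quasianalytic Denjoy–Carleman classes — whose members $g$ are compactly supported (or at least localize) on $\mathbb R$. **The main obstacle I expect** is verifying this last point carefully: one needs, for disjoint compact $X$ and point $\omega$, a function $g$ on $\mathbb R$ that is identically zero on a neighborhood of $X$, nonzero at $\omega$, and whose Fourier transform decays like $e^{-c|t|^\tau}$. This is exactly where the strict inequality $\tau<1$ is essential, since non-quasianalyticity (the Denjoy–Carleman condition) fails at $\tau=1$; the subexponential growth rate from Proposition \ref{growth} is tailored precisely to land in the non-quasianalytic regime.

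**To finish**, given such a $g$, the element $g(\Phi)\in\mathfrak E(\Phi)$ has Gelfand transform $g\circ\widehat\Phi$, which vanishes on $X$ (as $\widehat\Phi(X)$ sits where $g$ vanishes) and is nonzero at $\omega$, establishing regularity directly from the definition. One should double-check that $g(\Phi)$ genuinely belongs to the closed subalgebra $\mathfrak E(\Phi)$ generated by $\Phi$ — this follows since $g(\Phi)$ is a norm-limit in $\mathfrak E$ of Riemann sums, each a linear combination of the $u(t_j\Phi)$, which are limits of polynomials in $\Phi$ — and that the spectrum computed in $\mathfrak E(\Phi)$ agrees with the one computed in the ambient symmetric algebra, which is guaranteed by symmetry of $\mathfrak E$ from Lemma \ref{gendiff1} together with spectral permanence for the self-adjoint element $\Phi$.
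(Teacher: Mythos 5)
Your proposal is correct and rests on exactly the same analytic input as the paper: Proposition \ref{growth}, extended from integer $n$ to real $t$ (a routine step in both versions), giving $\norm{u(t\Phi)}_{\mathfrak E}=O(e^{|t|^{\tau}})$ for some $\tau<1$. The difference lies only in how this subexponential growth is converted into regularity. The paper verifies the Beurling--Domar-type integral condition $\int_{\mathbb R}\log\big(\norm{u(t\Phi)}_{\widetilde{\mathfrak E}}\big)(1+t^{2})^{-1}\,\d t<\infty$ and then invokes Shilov's classical criterion via \cite[Example 2.4]{Ne92}, whereas you re-prove that criterion from scratch by Fourier synthesis: $g(\Phi)=\tfrac{1}{2\pi}\int_{\mathbb R}\widehat g(t)\,e^{it\Phi}\,\d t$ for compactly supported $g$ in a non-quasianalytic Denjoy--Carleman class. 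Your route is self-contained and makes visible exactly where $\tau<1$ enters; the paper's is shorter, and the citation absorbs the bookkeeping you handle by hand (identification of $\Delta_{\mathfrak E(\Phi)}$ with the real spectrum of $\Phi$, the non-unital reduction via $u$, membership of $g(\Phi)$ in the closed subalgebra $\mathfrak E(\Phi)$). Two points in your write-up should be tightened: (a) the decay exponent of $\widehat g$ must strictly dominate the growth exponent --- fix $\tau<\tau_{1}<1$ and take $|\widehat g(t)|=O(e^{-c|t|^{\tau_{1}}})$, since decay ``faster than $e^{-|t|^{\tau}}$'' with the same exponent $\tau$ need not beat the constant hidden in $O(e^{C|t|^{\tau}})$, and the existence of such $g$ for every exponent strictly below $1$ is exactly what non-quasianalyticity provides; (b) the fact that $\widehat\Phi$ separates the points of $\Delta_{\mathfrak E(\Phi)}$ is automatic for a singly generated algebra and is not what makes $\mathfrak E(\Phi)$ a Banach function algebra --- that requires injectivity of the Gelfand transform on the algebra, i.e.\ semisimplicity, which holds here because every element of the commutative $^*$-algebra $\mathfrak E(\Phi)$ is normal and the left regular representation $\lambda$ is a faithful $^*$-representation into a $C^*$-algebra, so quasinilpotent elements vanish.
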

\begin{proof}
    Because of Proposition \ref{growth}, we can choose some $\tau\in(0,1)$ such that $\norm{u(n\Phi)}_{\mathfrak E}=O(e^{n^\tau})$ as $n\to\infty$. This easily implies $\norm{u(t\Phi)}_{\widetilde{\mathfrak E}}=O(e^{{|t|}^\tau})$ as $|t|\to\infty$. This implies that, for some $C>0$, $$\int_{\mathbb R}\frac{\log(\norm{u(t\Phi)}_{\widetilde{\mathfrak E}})}{1+t^2}\d t\leq C\int_{\mathbb R}\frac{|t|^\tau}{1+t^2}\d t<\infty.$$ Hence, by a classical criterion of Shilov (see \cite[Example 2.4]{Ne92} for a short proof, written in english), $\widetilde{\mathfrak E}(\Phi)=\mathfrak E(\Phi)$ is regular.
\end{proof}

Proposition \ref{regularsub} implies the $^*$-regularity of $L^1(\G\,\vert\,\mathscr C)$ and so it allows us to state the following corollary. It will show that $L^1(\G\,\vert\,\mathscr C)$ is another algebra satisfying the conjecture of Albrecht and Dales \cite[pag. 380]{AlDa83}, which can also be found in \cite{Run94}.

\begin{cor}\label{noncont}
    Let $\nu$ be a polynomial weight on $\G$ such that $\nu^{-1}$ belongs to $L^p(\G)$, for $0<p<\infty$. Assume the continuum hypothesis. If there exists $n\in\N$ such that $L^1(\G\,\vert\,\mathscr C)$ has infinitely many inequivalent, topologically irreducible, $n$-dimensional $^*$-representations, then $L^1(\G\,\vert\,\mathscr C)$ is the domain of a discontinuous homomorphism into a Banach algebra.
\end{cor}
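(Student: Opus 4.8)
The plan is to present $\B=L^1(\G\,\vert\,\mathscr C)$ as a $^*$-regular Banach $^*$-algebra and then to feed the hypothesis on its finite-dimensional representation theory into the Albrecht--Dales construction of a discontinuous homomorphism \cite{AlDa83}. First I would record that, by Proposition \ref{regularsub}, every self-adjoint $\Phi\in\mathfrak E$ generates a regular Banach function algebra; as remarked immediately after that proposition, this forces $\B$ to be $^*$-regular. Since each of the given $n$-dimensional topologically irreducible $^*$-representations of $\B$ extends canonically to an $n$-dimensional irreducible representation of $C^*(\G\,\vert\,\mathscr C)$ and inequivalence is preserved, the same combinatorial input---infinitely many pairwise inequivalent $n$-dimensional irreducible representations---is available at the $C^*$-level, so I may work in whichever algebra is convenient.

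Next I would run the corona construction. Enumerate a countable subfamily $\{\pi_k\}_{k\in\N}$ of pairwise inequivalent $n$-dimensional $^*$-representations. Each $\pi_k$ is contractive for the universal $C^*$-norm, hence bounded by $\norm{\cdot}_\B$, so the amplification $\Pi(b)=(\pi_k(b))_k$ is a continuous homomorphism $\Pi:\B\to M_n\big(\ell^\infty(\N)\big)$. Composing with the quotient onto $M_n\big(\ell^\infty(\N)/c_0(\N)\big)\cong M_n\big(C(\N^*)\big)$, with $\N^*=\beta\N\setminus\N$, produces a continuous homomorphism $\Theta:\B\to M_n\big(C(\N^*)\big)$ into a matrix algebra over an infinite-dimensional commutative unital $C^*$-algebra. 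Here the pairwise inequivalence of the $\pi_k$ is essential: finitely many inequivalent irreducible representations of a $C^*$-algebra are jointly surjective onto the corresponding finite product of matrix algebras, which guarantees that $\Theta(\B)$ is genuinely infinite-dimensional and large enough inside $M_n\big(C(\N^*)\big)$ for the last step to take effect. The continuum hypothesis then enters through the Dales--Esterle theorem: an infinite-dimensional commutative $C^*$-algebra such as $C(\N^*)$ admits, under CH, a discontinuous homomorphism $\rho$ into some Banach algebra $E$ \cite{Da00}. Amplifying by $M_n$ and composing, the candidate discontinuous homomorphism is $M_n(\rho)\circ\Theta:\B\to M_n(E)$.

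The hard part is verifying that this composite is genuinely discontinuous, rather than having the unboundedness of $\rho$ absorbed by $\Theta$: one must arrange the Dales--Esterle homomorphism to be unbounded along a sequence that lies in the range of $\Theta$ and that lifts to a norm-null sequence in $\B$. This is precisely the coherence argument at the heart of the Albrecht--Dales construction for $^*$-regular algebras, and it is here that $^*$-regularity does the real work, matching the hull-kernel structure of ${\rm Prim}_*\B$ with that of ${\rm Prim}\,C^*(\G\,\vert\,\mathscr C)$ so that the lifting can be controlled in the $\B$-norm and the resulting homomorphism is unbounded already on a $\B$-null sequence. I would therefore structure the proof to invoke their theorem once $^*$-regularity and the representation hypothesis are in place, rather than rebuilding the transfinite induction; the genuinely new ingredient is the $^*$-regularity of $\B$, which Proposition \ref{regularsub} supplies.
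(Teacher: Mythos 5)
Your high-level skeleton agrees with the paper's proof, which is only two sentences long: Proposition \ref{regularsub} gives local regularity, Barnes' theorem \cite{Ba81} upgrades this to $^*$-regularity, and then \cite[Theorem 1]{Ru96c} (Runde) finishes --- a theorem that applies \emph{directly} to $^*$-regular Banach $^*$-algebras having infinitely many inequivalent topologically irreducible $n$-dimensional $^*$-representations. The gap in your proposal is the final invocation. The construction in \cite{AlDa83} produces discontinuous homomorphisms from \emph{$C^*$-algebras}; for general ($^*$-regular) Banach $^*$-algebras the corresponding statement is precisely the \emph{conjecture} of Albrecht and Dales (p.~380 of their paper, as the text recalls just before the corollary), so there is no ``Albrecht--Dales construction for $^*$-regular algebras'' available to quote, and the step you yourself single out as the hard part --- unboundedness along a sequence that is null in the $L^1$-norm of $\B$, not merely in the $C^*$-norm --- is exactly the content of Runde's theorem, not something you can cite away to \cite{AlDa83}. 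This matters because Corollary \ref{noncont} assumes neither symmetry of $\B$ nor bounded approximate identities in its closed ideals, so the transfer result Theorem \ref{cstar} is unavailable here: a discontinuous homomorphism of $C^*(\G\,\vert\,\mathscr C)$ may a priori restrict to a continuous map on the dense subalgebra $\B$, whose norm is strictly finer, and your remark that one ``may work in whichever algebra is convenient'' glosses over exactly this difficulty.

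Separately, one intermediate claim in your sketch is false as stated: joint surjectivity of finitely many inequivalent representations does not guarantee that $\Theta(\B)$ is infinite-dimensional, or even nonscalar, inside $M_n\big(C(\N^*)\big)$. If the kernels of the $\pi_k$ accumulate in the primitive ideal space, the corona image collapses: for $C(X)$ with $X=\{0\}\cup\{1/k : k\in\N\}$ and $\pi_k$ the evaluation at $1/k$, every $f$ satisfies $f(1/k)\to f(0)$, so the image in $\ell^\infty(\N)/c_0(\N)$ is the one-dimensional algebra of constants --- yet this algebra has infinitely many inequivalent one-dimensional representations and, under CH, does admit a discontinuous homomorphism. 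So the corona map cannot carry the proof by itself; choosing a suitable subfamily of representations and controlling lifts in the $\B$-norm is the real work, and it is encapsulated in the theorem the paper cites rather than in anything assembled from Dales--Esterle plus the amplification.
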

\begin{proof}
    Proposition \ref{regularsub} implies that $L^1(\G\,\vert\,\mathscr C)$ is locally regular. By \cite{Ba81}, $L^1(\G\,\vert\,\mathscr C)$ is $^*$-regular and then \cite[Theorem 1]{Ru96c} gives the result.
\end{proof}

We now turn our attention to the study of sets of synthesis in $L^1(\G\,\vert\,\mathscr C)$. We start with the following lemma, inspired by \cite[Lemma 10]{Ba83}.

\begin{lem}\label{barnes}
    Let $\G$ be a group of polynomial growth. Let $I$ a closed two-sided ideal of $L^1(\G\,\vert\,\mathscr C)$ with finite codimension, containing a bounded left approximate identity. If $\Phi\in I$ and $\epsilon>0$, then there exist $\Psi_1,\Psi_2\in I$ such that $$\norm{\Phi-\Psi_1*\Phi}_{L^1(\G\,\vert\,\mathscr C)}<\epsilon\quad\text{ and }\quad\Psi_2*\Psi_1=\Psi_1. $$
\end{lem}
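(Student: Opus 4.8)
The plan is to separate the two conclusions. The estimate $\norm{\Phi-\Psi_1*\Phi}_{L^1(\G\,\vert\,\mathscr C)}<\epsilon$ is the soft part and follows directly from the hypothesis that $I$ carries a bounded left approximate identity. The exact relation $\Psi_2*\Psi_1=\Psi_1$ is the real content, and I want to stress at the outset that it cannot be extracted from the Cohen--Hewitt factorization theorem alone: factorizing an approximate-identity element only yields $\Psi_2*\Psi_1$ \emph{close} to $\Psi_1$ rather than equal to it (already in $C_0(\R)$ a strictly positive function admits no exact left unit). The exactness must therefore come from the regularity supplied by the polynomial growth of $\G$, through the functional calculus developed in the previous results.

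Concretely, I would first pass to a self-adjoint element $h=h^*\in I\cap\mathfrak{E}$ that still acts as an approximate unit on $\Phi$, say $\norm{\Phi-h*\Phi}_{L^1(\G\,\vert\,\mathscr C)}<\epsilon/2$; this is arranged from a bounded left approximate identity of $I$ together with the density and symmetry of $\mathfrak{E}$ (Lemma \ref{gendiff1}). By Proposition \ref{regularsub}, $\mathfrak{E}(h)$ is a regular Banach function algebra, and the subexponential growth of $\norm{u(nh)}_{\mathfrak{E}}$ recorded in Proposition \ref{growth} furnishes a smooth, multiplicative, continuous functional calculus $g\mapsto g(h)$ in the spirit of \cite{Fl24}. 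Choosing a smooth $F$ with $F(t)=t$ for $|t|\ge 1/n$, $F\equiv 0$ near $0$ and $F(0)=0$, I set $\Psi_1:=F(h)$; then $\Psi_1\in I$ (because $F(0)=0$ forces $F(h)=h*\tilde F(h)$ with $h\in I$), the Gelfand transform of $\Psi_1$ is supported away from the character where $\hat h$ vanishes, and for $n$ large $\norm{h-\Psi_1}_{\mathfrak{E}}$ is small, so $\norm{\Phi-\Psi_1*\Phi}_{L^1(\G\,\vert\,\mathscr C)}<\epsilon$. Finally I choose a smooth $G$ with $G\equiv 1$ on a neighborhood of $\{t\in\sigma(h):F(t)\neq 0\}$ and $G(0)=0$, and put $\Psi_2:=G(h)\in I$; since $G\cdot F=F$ on $\sigma(h)$, multiplicativity of the calculus gives $\Psi_2*\Psi_1=(G\cdot F)(h)=F(h)=\Psi_1$, exactly as required.

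The main obstacle is precisely this last, exact, step, and it decomposes into two points I would treat with care. First, one must guarantee that the spectral support of $\Psi_1$ is bounded away from the character at which $\hat h$ vanishes while \emph{simultaneously} keeping $\Psi_1$ close enough to $h$ for the approximation of $\Phi$ to persist; this is exactly the convergence $F(h)\to h$ in $\mathfrak{E}$, which rests on the continuity of the smooth functional calculus coming from Proposition \ref{growth}. Second, one must keep both $\Psi_1$ and $\Psi_2$ inside the ideal $I$ rather than merely in $\B$; this is handled by insisting $F(0)=G(0)=0$ together with $h\in I$, so that $F(h)$ and $G(h)$ both factor through $h$ and hence lie in $I$. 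Securing a self-adjoint $h$ that lies in $I\cap\mathfrak{E}$ and still acts as an approximate unit on $\Phi$ is the one place where density of $\mathfrak{E}$, symmetry, and the approximate identity of $I$ must be combined simultaneously, and I expect it to demand the most bookkeeping.
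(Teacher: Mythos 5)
Your proof shares the paper's architecture --- a self-adjoint approximate-identity element of $I$, a smooth functional calculus, and exactness of $\Psi_2*\Psi_1=\Psi_1$ via multiplicativity applied to functions with $G\cdot F=F$ --- but it breaks at exactly the step you flag as the crux: the claim that $\norm{h-F(h)}_{\mathfrak E}$ is small when $F(t)=t$ for $|t|\geq 1/n$ and $F\equiv 0$ near $0$. This does not follow from ``continuity of the functional calculus,'' because the cutoff errors $G_n=t-F_n$ do not tend to $0$ in any norm the calculus actually controls: $G_n$ is supported at scale $1/n$ and vanishes only to \emph{first} order at $0$, so the quantities majorizing $\norm{G_n(h)}_{\mathfrak E}$ (integrals of $|\widehat{G_n}(\xi)|$ against the growth $\norm{u(\xi h)}_{\mathfrak E}=O(e^{|\xi|^{\tau}})$ from Proposition \ref{growth}) blow up as $n\to\infty$; one factor of $1/n$ from the first-order vanishing cannot beat the localization cost. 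Worse, the claim is genuinely false, not merely unproved. Take $\G=\Z$ with the trivial bundle, $\nu(k)=(1+|k|)^2$ (so $\nu^{-1}\in\ell^1$), and $h=\tfrac{1}{2i}(\delta_1-\delta_{-1})\in\mathfrak E$, whose Gelfand transform is $\sin\theta$. The functional $\Lambda(f)=\sum_k ik\,f(k)=\tfrac{d}{d\theta}\hat f(e^{i\theta})\big|_{\theta=0}$ is bounded on $\mathfrak E=\ell^{1,\nu}(\Z)$, annihilates every $F_n(h)$ (whose Gelfand transform $F_n(\sin\theta)$ vanishes identically near $\theta=0$), yet $\Lambda(h)=1$; hence $\norm{h-F_n(h)}_{\mathfrak E}\geq 1$ for all $n$. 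So your $\Psi_1$ cannot be forced close to $h$, and the estimate $\norm{\Phi-\Psi_1*\Phi}<\epsilon$ collapses.

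This is precisely the difficulty the paper's proof is built to avoid, and it is where polynomial growth genuinely enters. Instead of truncating the identity function, the paper takes a self-adjoint bounded left approximate identity $\{\Psi_\alpha\}\subset I\cap C_{\rm c}(\G\,\vert\,\mathscr C)$ and approximates a high \emph{power} $t^n$: by Dixmier's Lemme 8 \cite{Di60} combined with the polynomial-growth functional calculus of \cite{Fl24}, there are $f_k\in C_{\rm c}^{\infty}(\R)$, with supports in a fixed compact set, such that $\norm{f_k(\Psi_\alpha)-\Psi_\alpha^n}_{L^1(\G\,\vert\,\mathscr C)}\to 0$. The reason this truncation \emph{can} be done is that $t^n$ vanishes to order $n$ at $0$, and once $n$ exceeds the growth degree this high-order vanishing dominates the localization cost; the price is harmless because $\norm{\Psi_\alpha^n*\Phi-\Phi}\leq n\norm{\Psi_\alpha*\Phi-\Phi}$, so $\Psi_\alpha^n$ still acts as an approximate unit on $\Phi$. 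The conclusion then follows by your own multiplicativity step with $\Psi_1=f_k(\Psi_\alpha)$ and $\Psi_2=g(\Psi_\alpha)$, $g\equiv 1$ near the common support of the $f_k$. Note finally that what you are asserting is a Ditkin-type (synthesis) property at the point $0$, i.e.\ a statement of the same nature as Theorem \ref{synth} --- which the paper \emph{derives from} this lemma; so your route, even if patchable, would require independent input of essentially the same strength as the conclusion.
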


\begin{proof}
    Because of density, we can find a finite dimensional subspace $X\subset C_{\rm c}(\G\,\vert\,\mathscr C)$, such that $I+X=L^1(\G\,\vert\,\mathscr C).$ If $P$ is the projection of $L^1(\G\,\vert\,\mathscr C)$ onto $I$ determined by this decomposition, then $P$ maps $C_{\rm c}(\G\,\vert\,\mathscr C)$ onto itself and hence $I \cap C_{\rm c}(\G\,\vert\,\mathscr C)$ is dense in $C_{\rm c}(\G\,\vert\,\mathscr C)$. This means that $I $ has a bounded left approximate identity $\{\Psi_\alpha\}$,  contained in  $I\cap C_{\rm c}(\G\,\vert\,\mathscr C)$. Without loss of generality, $\Psi_\alpha=\Psi_\alpha^*$ and $\norm{\Psi_\alpha}\leq C$.

    Now, because of \cite[Theorem 3.8]{Fl24}, there is a smooth functional calculus in $L^1(\G\,\vert\,\mathscr C)$ and $f(\Phi)\in I$ for all $f\in C_{\rm c}^{\infty}(\R)$ and $\Phi\in I$. When $f(t)=t^n$ in a neighborhood of $[-C,C]$, we have $f(\Phi)=\Phi^n$, and hence, because of \cite[Lemme 8]{Di60} and \cite[Theorem 2.5]{Fl24}, there exists a natural number $n\in\N$ and a sequence of functions $f_k\in C_{\rm c}^{\infty}(\R)$ with supports contained in a common compact set $K\subset\R$, so that \begin{equation*}
        \lim_{k\to\infty}\norm{f_k(\Psi)-\Psi^{n}}_{L^1(\G\,\vert\,\mathscr C)}=0,
    \end{equation*} for all $\Psi\in I$. On the other hand, if $g\in C_{\rm c}^{\infty}(\R)$ is such that $g\equiv 1$ on a neighborhood of $K$, then \begin{equation*}
        g(\Psi_\alpha)*f_k(\Psi_\alpha)=(g\cdot f_k)(\Psi_\alpha)=f_k(\Psi_\alpha).
    \end{equation*}  So \begin{align*}
        \norm{f_k(\Psi_\alpha)*\Phi-\Phi}_{L^1(\G\,\vert\,\mathscr C)}&\leq \norm{\big(f_k(\Psi_\alpha)-\Psi_\alpha^n\big)*\Phi}_{L^1(\G\,\vert\,\mathscr C)}+\norm{\Psi_\alpha^n*\Phi-\Phi}_{L^1(\G\,\vert\,\mathscr C)} \\
        &\leq \norm{f_k(\Psi_\alpha)-\Psi_\alpha^n}_{L^1(\G\,\vert\,\mathscr C)}\norm{\Phi}_{L^1(\G\,\vert\,\mathscr C)}+n\norm{\Psi_\alpha*\Phi-\Phi}_{L^1(\G\,\vert\,\mathscr C)},
    \end{align*} since $$\norm{\Psi_\alpha^n*\Phi-\Phi}_{L^1(\G\,\vert\,\mathscr C)}\leq \sum_{j=1}^n \norm{\Psi_\alpha^j*\Phi-\Psi_\alpha^{j-1}*\Phi}_{L^1(\G\,\vert\,\mathscr C)}\leq n\norm{\Psi_\alpha*\Phi-\Phi}_{L^1(\G\,\vert\,\mathscr C)}.$$ Therefore choosing $\Phi_1= f_k(\Psi_\alpha)$ and $\Phi_2= g(\Psi_\alpha)$ for large enough $k$ and $\alpha$ does the trick.
\end{proof}

\begin{thm}\label{synth}
    Suppose $\G$ is of polynomial growth, that $L^1(\G\,\vert\,\mathscr C)$ is symmetric and that every closed two-sided ideal $I$ of $L^1(\G\,\vert\,\mathscr C)$ has a bounded left approximate identity. Let $F$ be a finite closed subset of $\,{\rm Prim}_*L^1(\G\,\vert\,\mathscr C)$ such that every $P\in F$ has finite codimension. Then $F$ is a set of synthesis.
\end{thm}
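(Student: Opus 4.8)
The plan is to show that every closed two-sided ideal $I$ with $h(I)=F$ coincides with $k(F)$. Since $h(I)=F$ forces $I\subseteq\bigcap_{P\in F}P=k(F)$, and $h(k(F))=F$ because $F$ is closed, it suffices to establish the reverse inclusion $k(F)\subseteq I$. First I would reduce to the case where $I$ is self-adjoint. Each $P\in F$ is the kernel of a topologically irreducible $^*$-representation, hence a self-adjoint prime ideal; consequently $I^*$ also satisfies $h(I^*)=F$, and using primality one checks that the closed $^*$-ideal $I\cap I^*$ still has hull $F$: if $Q\in{\rm Prim}_*\B$ contains $I\cap I^*\supseteq I\,I^*$, then primality gives $I\subseteq Q$ or $I^*\subseteq Q$, so $Q\in F$. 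Granting the theorem for self-adjoint ideals, $I\cap I^*=k(F)$, and the chain $k(F)=I\cap I^*\subseteq I\subseteq k(F)$ yields $I=k(F)$. So I may assume $I=I^*$.

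Next I would record two structural facts. First, $k(F)$ has finite codimension: each $\B/P$ with $P\in F$ is a finite-dimensional algebra that is the image of a topologically irreducible $^*$-representation, hence simple (a matrix algebra), so $\B/k(F)$ embeds in the finite-dimensional product $\prod_{P\in F}\B/P$. Being a closed two-sided ideal, $k(F)$ carries a bounded left approximate identity by hypothesis, so Lemma \ref{barnes} applies to it. Second, since $I$ is a closed $^*$-ideal and $\B=L^1(\G\,\vert\,\mathscr C)$ is symmetric, the quotient $\B/I$ is again a symmetric Banach $^*$-algebra: a self-adjoint element of $\B/I$ lifts to a self-adjoint element of $\B$, and spectra can only shrink under the quotient homomorphism, so its spectrum stays real. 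The topologically irreducible $^*$-representations of $\B/I$ are exactly those of $\B$ that annihilate $I$, and their kernels are precisely the $P/I$ with $P\in F$; hence $k(F)/I$ is the reducing ideal ($^*$-radical) of $\B/I$. Because $\B/I$ is symmetric, its $^*$-radical coincides with its Jacobson radical, so every element of $k(F)/I$ is quasinilpotent.

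Now I carry out the main step. Fix $\Phi\in k(F)$ and $\epsilon>0$, and apply Lemma \ref{barnes} to $k(F)$ to obtain $\Psi_1,\Psi_2\in k(F)$ with $\norm{\Phi-\Psi_1*\Phi}_{L^1(\G\,\vert\,\mathscr C)}<\epsilon$ and $\Psi_2*\Psi_1=\Psi_1$. Writing $q$ for the quotient map onto $\B/I$, the relation becomes $\big(1-q(\Psi_2)\big)q(\Psi_1)=0$ in the unitization. Since $q(\Psi_2)\in k(F)/I$ is quasinilpotent, $1-q(\Psi_2)$ is invertible, forcing $q(\Psi_1)=0$, that is $\Psi_1\in I$. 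Then $\Psi_1*\Phi\in I$ and $\norm{\Phi-\Psi_1*\Phi}_{L^1(\G\,\vert\,\mathscr C)}<\epsilon$; letting $\epsilon\to 0$ and using that $I$ is closed gives $\Phi\in I$. As $\Phi\in k(F)$ was arbitrary, $k(F)\subseteq I$, whence $I=k(F)$ and $F$ is a set of synthesis.

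The main obstacle is the identification in the second paragraph, namely that $k(F)/I$ consists of quasinilpotent elements. This is exactly where symmetry is indispensable: it guarantees that the reducing ideal of $\B/I$ (manifestly $k(F)/I$) lies in the Jacobson radical, which is what lets the factorization relation $\Psi_2*\Psi_1=\Psi_1$ collapse to $\Psi_1\in I$. If one prefers to avoid quoting the equality of the two radicals, one can instead observe from the construction in Lemma \ref{barnes} that $\Psi_2$ may be taken self-adjoint, and use that a self-adjoint element of the reducing ideal of a symmetric algebra has spectral radius zero.
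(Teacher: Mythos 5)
Your proof is correct, and it shares the paper's skeleton---both arguments apply Lemma \ref{barnes} to the finite-codimensional ideal $k(F)$ and then show that the resulting left factor $\Psi_1$ already lies in any closed two-sided ideal $I$ with $h(I)=F$---but you justify that crucial step by a genuinely different, self-contained argument. The paper simply cites Ludwig \cite[Lemma 2]{Lu80} to conclude that the set $M=\{\Psi\in k(F)\mid \exists\,\Phi\in k(F),\ \Phi*\Psi=\Psi\}$ is contained in every such $I$, and then applies Lemma \ref{barnes} once more to get $k(F)\subset I$. You instead pass to the quotient $\B/I$, observe that it is symmetric, identify $k(F)/I$ with its reducing ideal, invoke the equality of reducing ideal and Jacobson radical in symmetric Banach $^*$-algebras (Pt\'ak--Raikov theory, part of \cite[Theorem 11.4.1]{Pa94}), and conclude that $1-q(\Psi_2)$ is invertible in the unitization, forcing $q(\Psi_1)=0$; this is in effect an inlined proof of the special case of Ludwig's lemma that the paper needs. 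The price of your route is that the quotient must carry an involution, which forces your preliminary reduction to self-adjoint $I$; that reduction, via primality and self-adjointness of kernels of topologically irreducible $^*$-representations and the observation $h(I\cap I^*)=F$, is correct, whereas the paper's appeal to Ludwig works for arbitrary closed two-sided ideals and needs no such step. What your version buys is transparency: it makes explicit exactly where symmetry enters (reducing ideal contained in the radical) and keeps the argument self-contained modulo standard hermitian-algebra facts (automatic continuity of $^*$-representations, computation of the reducing ideal from topologically irreducible representations), which you use correctly though only in sketch form.
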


\begin{proof}
    Define $$M=\{\Psi\in k(F)\mid \exists \Phi \in k(F) \text{ such that }\Phi*\Psi=\Psi\}.$$ Lemma \ref{barnes} applied to the finite codimensional ideal $k(F)$ implies that $h(M)=F$. It also clear that $F\subset h(\Phi)$ for all the elements $\Phi$ involved in the definition of $M$. Now, if $I$ is a closed two-sided ideal of $L^1(\G\,\vert\,\mathscr C)$ with $h(I)=F$, then because of \cite[Lemma 2]{Lu80}, we have $M\subset I$. Then applying Lemma \ref{barnes} again, we have $k(F)\subset I$ and hence $I=k(F)$.
\end{proof}

The work we have done so far allows us to check in fairly great generality the conditions given in Theorem \ref{theprop}, so we can state the following result -our main result-. It provides a fairly checkable criterion for automatic continuity. 

\begin{thm}\label{main1}
    Let $\G$ be a locally compact group that admits a polynomial weight $\nu$, such that $\nu^{-1}\in L^p(\G)$. Suppose that the algebra $\B=L^1(\G\,\vert\,\mathscr C)$ is symmetric and that every closed two-sided ideal $I$ of $\B$ has a bounded left approximate identity. Let $\X$ be a weak Banach $\B$-bimodule and $\theta:\B\to\X$ a $\B$-intertwining operator. Then $\theta$ is continuous if and only if $\mathscr I(\theta)$ is closed. In such a case, $\mathscr I(\theta)$ is of finite codimension. 
\end{thm}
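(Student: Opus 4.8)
The strategy is to verify that $\B=L^1(\G\,\vert\,\mathscr C)$ satisfies all the hypotheses of Theorem \ref{theprop}, and then simply invoke it. This reduces the problem to checking four conditions, three of which have essentially been prepared in the preceding results of this section.

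First I would produce the dense regular subalgebra required by condition (i). The natural candidate is $\mathfrak E=L^{1,\nu}(\G\,\vert\,\mathscr C)\cap L^\infty(\G\,\vert\,\mathscr C)$. By Lemma \ref{gendiff1} it is a symmetric Banach $^*$-subalgebra of $\B$, and by Proposition \ref{regularsub} the algebra $\mathfrak E(\Phi)$ is a regular Banach function algebra for every self-adjoint $\Phi\in\mathfrak E$. The density of $\mathfrak E$ in $\B$ follows because $\mathfrak E$ contains $C_{\rm c}(\G\,\vert\,\mathscr C)$, which is dense in $L^1(\G\,\vert\,\mathscr C)$ by construction. So condition (i) holds with $\A=\mathfrak E$.

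Next, conditions (ii) and (iii). For the Wiener property (W), I would invoke the $^*$-regularity of $\B$: Proposition \ref{regularsub} shows $\B$ is locally regular, hence $^*$-regular by \cite{Ba81}, and $^*$-regular Banach $^*$-algebras are known to have property (W) (every proper closed two-sided ideal is annihilated by some topologically irreducible $^*$-representation, which is exactly the statement that the Jacobson topology on ${\rm Prim}_*\B$ separates such ideals from the whole algebra). For condition (iii), the hypotheses of the present theorem — polynomial growth, symmetry, and bounded left approximate identities in every closed two-sided ideal — are precisely the hypotheses of Theorem \ref{synth}, which asserts that every finite closed subset $F\subset{\rm Prim}_*\B$ consisting of finite-codimensional ideals is a set of synthesis. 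That is condition (iii) verbatim. Note that symmetry of $\B$ together with $^*$-regularity is what underwrites the whole framework here.

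With (i)--(iii) in hand, the first conclusion of Theorem \ref{theprop} gives that $\overline{\mathscr I(\theta)}^{\norm{\cdot}_{\B}}$ has finite codimension in $\B$. For the equivalence statement I would then check condition (iv): every finite-codimensional closed two-sided ideal has a bounded left approximate identity — but this is immediate from the standing hypothesis that \emph{every} closed two-sided ideal of $\B$ does. Thus all four conditions of Theorem \ref{theprop} hold with $\X_1=\B$, and its second conclusion yields that $\theta$ is continuous if and only if $\mathscr I(\theta)$ is closed. The final clause, that $\mathscr I(\theta)$ is then of finite codimension, follows because when $\theta$ is continuous $\mathscr I(\theta)$ is closed and equals $\overline{\mathscr I(\theta)}^{\norm{\cdot}_{\B}}$, which we already showed has finite codimension. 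The only genuine subtlety I anticipate is justifying property (W): one must be careful that $^*$-regularity delivers the existence of topologically irreducible $^*$-representations annihilating an arbitrary proper closed ideal, rather than merely separating points of ${\rm Prim}_*\B$, but this is standard for symmetric $^*$-regular algebras and requires only citing the appropriate structural result.
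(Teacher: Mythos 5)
Your overall strategy --- reducing the theorem to a verification of hypotheses (i)--(iv) of Theorem \ref{theprop} --- is exactly the paper's, and most of the verification is sound: condition (i) via Proposition \ref{regularsub} with $\A=\mathfrak E$ (your density remark is fine, since $\mathfrak E\supset C_{\rm c}(\G\,\vert\,\mathscr C)$), condition (iii) via Theorem \ref{synth} (note that polynomial growth is not literally a hypothesis of the theorem, but it does follow from the existence of the weight $\nu$ with $\nu^{-1}\in L^p(\G)$, so Theorem \ref{synth} indeed applies), condition (iv) as an immediate consequence of the standing hypothesis, and the finite-codimension clause from the first conclusion of Theorem \ref{theprop}.

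The genuine gap is your justification of condition (ii), the Wiener property. You propose the chain: locally regular $\Rightarrow$ $^*$-regular (by \cite{Ba81}) $\Rightarrow$ property (W), calling the last implication ``standard for symmetric $^*$-regular algebras.'' It is not standard, and it does not hold as stated. $^*$-regularity is a statement about kernels of $^*$-representations: it says that the canonical map ${\rm Prim}\,C^*(\B)\to{\rm Prim}_*\B$, $P\mapsto P\cap\B$, is a homeomorphism. It gives no control over a proper closed two-sided ideal $I\subset\B$ whose hull $h(I)$ is \emph{empty} --- equivalently, an ideal that is dense in $C^*(\B)$ --- and ruling out exactly such ideals is the entire content of (W). Your parenthetical gloss (``the Jacobson topology separates such ideals from the whole algebra'') merely restates (W); it is not a consequence of $^*$-regularity, and symmetry alone does not supply it either (radical Banach $^*$-algebras are symmetric, and even in the commutative regular semisimple case one needs a Tauberian-type condition on top of regularity). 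In the paper, (W) is a substantive external input: it is \cite[Theorem 5.14]{Fl24}, proved for symmetric $L^1(\G\,\vert\,\mathscr C)$ under the weight hypothesis via the smooth functional calculus, in the spirit of Ludwig's and Barnes' Wiener theorems for groups of polynomial growth. (The paper does use the chain ``locally regular $\Rightarrow$ $^*$-regular'' from \cite{Ba81}, but only in Corollary \ref{noncont}, for the Albrecht--Dales application --- not to obtain (W).) Your proof becomes correct once the argument for (ii) is replaced by a citation of \cite[Theorem 5.14]{Fl24}.
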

\begin{proof}
    It follows directly from checking the conditions in Theorem \ref{theprop}. \emph{(i)} holds by Proposition \ref{regularsub}, \emph{(ii)} holds by \cite[Theorem 5.14]{Fl24},  \emph{(iii)} is guaranteed by Theorem \ref{synth} and \emph{(iv)} is assumed.
\end{proof}

\begin{cor}\label{main2}
    Let $\B=L^1(\G\,\vert\,\mathscr C)$ satisfying the same conditions as in Theorem \ref{main1}, if $\X$ is a Banach $\B$-bimodule, then $\theta:\B\to\X$ is automatically continuous. In particular, all derivations of $\B$ into Banach $\B$-bimoules are continuous.
\end{cor}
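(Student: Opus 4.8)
The plan is to deduce this directly from Theorem \ref{main1}, together with the observation—already recorded in the text above—that the continuity ideal is automatically closed as soon as the codomain is a genuine Banach $\B$-bimodule rather than merely a weak one. First I would note that every Banach $\B$-bimodule is in particular a weak Banach $\B$-bimodule, since joint continuity of the module actions implies their separate continuity. Hence $\X$ satisfies the hypotheses imposed on the codomain in Theorem \ref{main1}, and that theorem reduces the entire problem to verifying that $\mathscr I(\theta)$ is closed.

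The key step is therefore to show that $\mathscr I(\theta)$ is closed. For each fixed $\eta\in\mathscr S(\theta)$, the maps $L_\eta,R_\eta:\B\to\X$ defined by $L_\eta(b)=b\eta$ and $R_\eta(b)=\eta b$ are bounded linear operators, precisely because the bimodule actions $\B\times\X\to\X$ are jointly continuous; this is the one place where I genuinely use that $\X$ is a Banach, and not merely a weak Banach, bimodule. Consequently
$$\mathscr I(\theta)=\bigcap_{\eta\in\mathscr S(\theta)}\big(\ker L_\eta\cap\ker R_\eta\big)$$
is an intersection of kernels of continuous maps, hence a closed two-sided ideal of $\B$.

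With $\mathscr I(\theta)$ closed, Theorem \ref{main1} applies verbatim and yields the continuity of $\theta$ (and, as a byproduct, that $\mathscr I(\theta)$ has finite codimension). For the final assertion I would invoke Example \ref{ex-inter}(iii): every derivation $D:\B\to\X$ into a Banach $\B$-bimodule is a $\B$-intertwining operator, so the general statement specializes to give its automatic continuity. I do not anticipate a real obstacle here, as the corollary is essentially a specialization of Theorem \ref{main1}; the only point requiring care is exactly the verification that the stronger (joint) continuity of the bimodule actions forces $\mathscr I(\theta)$ to be closed—a phenomenon that can fail for weak Banach bimodules, which is why Theorem \ref{main1} must retain the ``if and only if'' while the present corollary can drop it.
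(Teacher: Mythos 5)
Your proposal is correct and follows essentially the same route as the paper: the paper's proof is exactly ``apply Theorem \ref{main1} plus the fact that $\mathscr I(\theta)$ is closed when $\X$ is a Banach $\B$-bimodule,'' a fact it only asserts in the remark following the definition of the continuity ideal. Your argument that $\mathscr I(\theta)=\bigcap_{\eta\in\mathscr S(\theta)}\left(\ker L_\eta\cap\ker R_\eta\right)$ is closed, using joint continuity of the actions to make $L_\eta$ and $R_\eta$ bounded, is precisely the justification the paper leaves implicit, so you have merely filled in a detail rather than diverged from the intended proof.
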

\begin{proof}
    Follows from Theorem \ref{main1} and the fact that $\mathscr I(\theta)$ is closed because $\X$ be a Banach $\B$-bimodule.
\end{proof}

On the other hand, Corollary \ref{theprop-simple} provides us with the following automatic continuity result. 

\begin{thm}\label{main3}
    Let $\G$ be a discrete group that admits a polynomial weight $\nu$, such that $\nu^{-1}\in \ell^p(\G)$. Suppose that the algebra $\B=\ell^1(\G\,\vert\,\mathscr C)$ is unital, symmetric and $C^*(\G\,\vert\,\mathscr C)$ has no proper closed two-sided ideals with finite codimension. Let $\X_1$ a Banach $\B$-bimodule, $\X_2$ a weak Banach $\B$-bimodule and $\theta:\X_1\to\X_2$ a $\B$-intertwining operator. Then $\theta$ is continuous. In particular, all algebra homomorphisms with domain $\B$ are automatically continuous.
\end{thm}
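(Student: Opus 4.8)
The plan is to verify the three hypotheses of Corollary \ref{theprop-simple} for $\B=\ell^1(\G\,\vert\,\mathscr C)$ and then apply it verbatim. First I would record the standing structural facts: $\B$ is unital and symmetric by assumption, and it is an $A^*$-algebra because the left regular representation $\lambda$ is a faithful $^*$-representation, so that an enveloping $C^*$-norm exists; moreover the enveloping $C^*$-algebra $C^*(\B)$ is exactly $C^*(\G\,\vert\,\mathscr C)$. Consequently hypothesis (iii) of Corollary \ref{theprop-simple}---that $C^*(\B)$ has no proper closed two-sided ideal of finite codimension---is precisely the assumption imposed on $C^*(\G\,\vert\,\mathscr C)$.

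For hypothesis (i), I would take the dense subalgebra to be $\A=\mathfrak E=\ell^{1,\nu}(\G\,\vert\,\mathscr C)\cap\ell^\infty(\G\,\vert\,\mathscr C)$, a symmetric Banach $^*$-subalgebra of $\B$ by Lemma \ref{gendiff1}. Its density is immediate in the discrete setting, since $\mathfrak E$ contains every finitely supported section, i.e. $C_{\rm c}(\G\,\vert\,\mathscr C)\subset\mathfrak E$, and these are dense in $\ell^1(\G\,\vert\,\mathscr C)$. The requirement that $\mathfrak E(\Phi)$ be a regular Banach function algebra for each self-adjoint $\Phi\in\mathfrak E$ is exactly Proposition \ref{regularsub}, which applies because $\nu^{-1}\in\ell^p(\G)$. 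For hypothesis (ii), the Wiener property (W), I would argue as in the proof of Theorem \ref{main1}: it is furnished by \cite[Theorem 5.14]{Fl24}, whose hypotheses are met here via the symmetry of $\B$ together with the $^*$-regularity that Proposition \ref{regularsub} yields through \cite{Ba81}.

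With (i)--(iii) established, Corollary \ref{theprop-simple} gives the continuity of every $\B$-intertwining operator $\theta:\X_1\to\X_2$, which is the main claim. For the final assertion, I would specialize through Example \ref{ex-inter}(ii): given an algebra homomorphism $\theta:\B\to\A$ into any Banach algebra $\A$, endow $\A$ with the weak Banach $\B$-bimodule structure $a\cdot b=a\theta(b)$ and $b\cdot a=\theta(b)a$, so that $\theta$ becomes a $\B$-intertwining operator with $\X_1=\B$ and $\X_2=\A$; its continuity then follows from what was just proved. I do not anticipate a genuine obstacle here, since all the substantive work is already packaged into Proposition \ref{regularsub}, Corollary \ref{theprop-simple} and \cite{Fl24}; the only points meriting a line of justification are the identification $C^*(\B)=C^*(\G\,\vert\,\mathscr C)$ and the density of $\mathfrak E$, both of which are routine in the discrete case.
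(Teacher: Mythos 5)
Your proposal is correct and follows exactly the paper's intended route: Theorem \ref{main3} is stated as a direct consequence of Corollary \ref{theprop-simple}, with hypothesis (i) supplied by Lemma \ref{gendiff1} and Proposition \ref{regularsub} applied to $\mathfrak E$, hypothesis (ii) by \cite[Theorem 5.14]{Fl24} via symmetry, and hypothesis (iii) being the assumption on $C^*(\G\,\vert\,\mathscr C)$. Your additional checks (the identification $C^*(\B)=C^*(\G\,\vert\,\mathscr C)$, the density of $\mathfrak E$ in the discrete case, and the reduction of the homomorphism statement via Example \ref{ex-inter}(ii)) are precisely the details the paper leaves implicit.
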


Let us now provide examples of algebras satisfying the conditions above. The following two remarks provide known results from the pre-existing literature.

\begin{rem}\label{ex-sym}
    $L^1(\G\,\vert\,\mathscr C)$ is known to be symmetric, irrespective of the Fell bundle $\mathscr C$, as soon as $\G$ is nilpotent \cite{FJM} or compact \cite[Theorem 7.5]{Fl24}.
\end{rem}

\begin{rem}\label{ex-am}
As per the results in \cite{dJeHP17}, the algebras $\ell^1_\alpha(\G,\A)$, corresponding to $C^*$-dynamical systems where $\G$ is amenable and $\A$ is finite-dimensional or abelian, are amenable. Because of Remark \ref{conditions}, this is enough to guarantee that every closed two-sided ideal $I$ has a bounded left approximate identity.
\end{rem}

\begin{rem}
    Let $(\G,\A,\alpha)$ be a $C^*$-dynamical system, with $\G$ discrete. Then $C^*(\ell^1_\alpha(\G,\A))=\A\rtimes_\alpha\G$ is simple whenever the action $\alpha$ is minimal and topologically free \cite{AS94}.
\end{rem}

\begin{rem}
    If $\alpha$ is now a partial action of a discrete group $\G$ on the locally compact space $X$, then minimality and topological freeness of $\alpha$ also guarantee the simplicity of $\A\rtimes_\alpha\G$ \cite[Corollary 29.8]{Ex17}. More interesting is the fact that $\A\rtimes_\alpha\G$ has no proper finite-codimensional closed two-sided ideals, provided that $\G$ is amenable, infinite and that the restriction of $\alpha$ to any invariant subset is topologically free \cite[Theorem 29.9]{Ex17}.
\end{rem}

\begin{cor}\label{segundo}
    Let $\G$ be either compact or nilpotent and compactly generated. Let $\X$ be a Banach $\B$-bimodule and $\theta:\B\to\X$ a $\B$-intertwining operator. Then $\theta$ is automatically continuous for the following choices of $\B$: \begin{enumerate}
        \item[(i)] Twisted group algebras $L^1_\omega(\G)$, associated with a $2$-cocycle $\omega:\G\times \G\to \CC$.
        \item[(ii)] Convolution algebras $\ell^1_\alpha(\G,\A)$, where $(\G,\A,\alpha)$ is a $C^*$-dynamical system with $\A$ either an abelian $C^*$-algebra or a finite-dimensional $C^*$-algebra.
        \item[(iii)] $\ell^1(\G\,\vert\,\mathscr C)$, if it is unital and $C^*(\G\,\vert\,\mathscr C)$ has no proper finite-codimensional closed two-sided ideals.
        \item[(iv)] $\ell^1_\alpha(\G,C(X))$, if $\G$ is infinite and $\alpha$ is a free partial action on the compact space $X$, such that the restriction of $\alpha$ to any invariant subset of $X$ is topologically free.
    \end{enumerate} In particular, all derivations into Banach bimodules and with domains in the previously mentioned algebras are continuous. \end{cor}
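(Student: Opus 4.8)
The plan is to apply our two main automatic continuity results: Corollary \ref{main2} for cases (i) and (ii), and Theorem \ref{main3} for cases (iii) and (iv); in each case the work reduces to verifying a short list of hypotheses. Several ingredients are common to all four cases. Since $\G$ is compact or nilpotent and compactly generated, it has polynomial growth, so the remark following the definition of a polynomial weight furnishes a polynomial weight $\nu$ with $\nu^{-1}\in L^p(\G)$ for every $p\in(0,\infty)$. Moreover, each of the four algebras is of the form $L^1(\G\,\vert\,\mathscr C)$ for a suitable Fell bundle --- the twisted group algebra corresponds to a line bundle, and the convolution algebras to the bundle $\mathscr C_\alpha$ of Example \ref{mainex} --- and since $\G$ is nilpotent or compact, Remark \ref{ex-sym} ensures that $L^1(\G\,\vert\,\mathscr C)$ is symmetric.

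For cases (i) and (ii) we apply Corollary \ref{main2}, so it only remains to check that every finite-codimensional closed two-sided ideal carries a bounded left approximate identity (this is the form of condition \emph{(iv)} of Theorem \ref{theprop} actually used in the proof of Theorem \ref{synth}). In case (i) the group $\G$ is amenable, being compact or nilpotent, and Theorem \ref{twistedgp} of the appendix supplies this property for twisted group algebras of amenable groups. In case (ii) the group $\G$ is discrete and amenable and $\A$ is abelian or finite-dimensional, so $\ell^1_\alpha(\G,\A)$ is amenable by Remark \ref{ex-am}; the bounded left approximate identity for finite-codimensional ideals then follows from \cite[Proposition VII.2.31]{He89}, as recorded in Remark \ref{conditions}. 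In either case Corollary \ref{main2} yields the continuity of $\theta$.

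For cases (iii) and (iv) we instead invoke Theorem \ref{main3}, taking $\X_1=\B$ and $\X_2=\X$ (a Banach $\B$-bimodule being in particular a weak one). Here $\G$ is discrete, so $\B=\ell^1(\G\,\vert\,\mathscr C)$, and we must verify that $\B$ is unital and that $C^*(\G\,\vert\,\mathscr C)$ has no proper finite-codimensional closed two-sided ideals. In (iii) both hypotheses are assumed outright. In (iv) the algebra $\B=\ell^1_\alpha(\G,C(X))$ is unital because $X$ is compact and $\G$ is discrete, while the absence of proper finite-codimensional closed ideals in $C(X)\rtimes_\alpha\G$ follows from \cite[Theorem 29.9]{Ex17}: the group $\G$ is infinite and, being finitely generated nilpotent, amenable, the action $\alpha$ is free, and its restriction to every invariant subset is topologically free. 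Thus Theorem \ref{main3} applies and $\theta$ is continuous.

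The only step demanding real care is the last one: matching the freeness and invariant-subset conditions of (iv) to the precise hypotheses of Exel's theorem so as to exclude proper finite-codimensional ideals in the crossed product. Once this structural input is secured, continuity is immediate. Finally, since every derivation into a Banach $\B$-bimodule is a $\B$-intertwining operator by Example \ref{ex-inter}(iii), the concluding assertion about derivations follows in each of the four cases.
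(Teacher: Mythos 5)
Your proposal is correct and follows essentially the same route as the paper: cases (i) and (ii) via Corollary \ref{main2} (with the bounded-approximate-identity input supplied by Theorem \ref{twistedgp} for twisted group algebras and by Remarks \ref{ex-am} and \ref{conditions} for $\ell^1_\alpha(\G,\A)$), and cases (iii) and (iv) via Theorem \ref{main3} together with the cited structural facts about $C^*(\G\,\vert\,\mathscr C)$, which is exactly what the paper's one-line proof (``Corollary \ref{main2}, the previous discussion, and Theorem \ref{twistedgp} for (i)'') compresses. Your version is in fact slightly more careful than the paper's, e.g.\ in noting that only finite-codimensional ideals need bounded left approximate identities and in checking amenability, unitality, and the discreteness constraints case by case.
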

\begin{proof}
    It all follows from Corollary \ref{main2} and the previous discussion, except for \emph{(i)}, which requires Theorem \ref{twistedgp}.
\end{proof}

\begin{cor}\label{tercero}
    Let $\G$ be infinite and either locally finite or of polynomial growth and finitely generated. Let $\B=\ell^1_\alpha(\G,C(X))$, where $\alpha$ is a (partial) action on the compact space $X$. Further suppose that the restriction of $\alpha$ to any invariant subset of $X$ is topologically free. Let $\X_1$ a Banach $\B$-bimodule, $\X_2$ a weak Banach $\B$-bimodule and $\theta:\X_1\to\X_2$ a $\B$-intertwining operator. Then $\theta$ is automatically continuous. In particular, all homomorphisms $\varphi:\B\to\A$, where $\A$ is a Banach algebra, are continuous. 
\end{cor}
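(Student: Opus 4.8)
The plan is to reduce this corollary to the two automatic-continuity results already established, namely Theorem \ref{main3} (for the intertwining/homomorphism statement over a simple-type algebra) and Corollary \ref{main2} (for the derivation/bimodule statement over an algebra with approximate identities in every closed ideal), depending on which structural hypotheses the group $\G$ supplies. The first step is to verify that the standing hypotheses of those theorems hold for $\B=\ell^1_\alpha(\G,C(X))$. Since $\G$ is either locally finite or finitely generated of polynomial growth, the remark following the definition of polynomial weight gives a polynomial weight $\nu$ on $\G$ with $\nu^{-1}\in \ell^p(\G)$ for any $0<p<\infty$; this supplies hypothesis \emph{(i)} of Theorem \ref{theprop} via Proposition \ref{regularsub} and the dense subalgebra $\mathfrak E$.

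Next I would establish symmetry of $\B$. Here the group $\G$ is abelian-by-finite in the locally finite case and of polynomial growth in general, and $C(X)$ is abelian; the cleanest route is to invoke Remark \ref{ex-sym}, noting that a group of polynomial growth that is finitely generated is virtually nilpotent (Gromov), and that the nilpotent or compact cases cover symmetry irrespective of $\mathscr C$. The Wiener property \emph{(W)} follows from \cite[Theorem 5.14]{Fl24} once $^*$-regularity is in hand, exactly as in the proof of Theorem \ref{main1}; and the set-of-synthesis condition is Theorem \ref{synth}, which applies because $\G$ has polynomial growth and every closed two-sided ideal has a bounded left approximate identity. This last point, the bounded left approximate identities, is supplied by Remark \ref{ex-am}: since $C(X)$ is abelian and $\G$ is amenable (locally finite groups and polynomial-growth groups are amenable), the algebra $\ell^1_\alpha(\G,C(X))$ is amenable, and by Remark \ref{conditions} every closed two-sided ideal then carries a bounded left approximate identity.

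The decisive structural input is the absence of proper finite-codimensional closed two-sided ideals, which is exactly what lets us pass from ``$\mathscr I(\theta)$ closed $\Rightarrow$ continuous'' to unconditional continuity. For this I would invoke \cite[Theorem 29.9]{Ex17}: because $\G$ is infinite and amenable, the action is a partial action on the compact space $X$ whose restriction to every invariant subset is topologically free, the crossed product $C^*(\B)=C(X)\rtimes_\alpha\G$ has no proper finite-codimensional closed two-sided ideals. With that hypothesis secured, Corollary \ref{theprop-simple} (equivalently Theorem \ref{main3} in the unital discrete case, or the argument of its proof via Proposition \ref{simple}) shows $\overline{\mathscr I(\theta)}^{\norm{\cdot}_\B}$ cannot be a proper ideal, forcing $\mathscr I(\theta)$ to be dense and hence all of $\B$; therefore $\theta$ is continuous. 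The homomorphism statement is then the special case described in Example \ref{ex-inter}(ii).

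The main obstacle I anticipate is bookkeeping rather than a genuine difficulty: one must confirm that the hypotheses of the Exel result \cite[Theorem 29.9]{Ex17} are met in the stated generality (in particular that topological freeness on invariant subsets, together with infiniteness and amenability of $\G$, is precisely the combination needed to rule out finite-codimensional ideals), and that ``free partial action'' as used elsewhere in the paper is at least as strong as the topological-freeness-on-invariant-subsets hypothesis invoked here. A secondary point requiring care is that $\B$ need not be unital when $\G$ is merely locally finite, so one should either apply Corollary \ref{theprop-simple} directly (which does not require unitality beyond what Proposition \ref{simple} uses) or pass to the unitization; I would handle this by working with $\widetilde{\B}$ and noting that the finite-codimensional ideal structure is unchanged.
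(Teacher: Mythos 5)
Your overall route is the one the paper intends: Corollary \ref{tercero} is an application of Theorem \ref{main3} (equivalently, of Corollary \ref{theprop-simple}), with the polynomial weight supplied by \cite{Py82}, the decisive input --- absence of proper finite-codimensional closed two-sided ideals in $C^*(\B)=C(X)\rtimes_\alpha\G$ --- supplied by \cite[Theorem 29.9]{Ex17}, and the homomorphism statement obtained as in Example \ref{ex-inter}(ii). Two of your verifications are superfluous, however: sets of synthesis (Theorem \ref{synth}) and bounded left approximate identities in closed ideals (Remark \ref{ex-am}) are hypotheses of Theorem \ref{theprop}(iii)--(iv) and Theorem \ref{main1}, not of Corollary \ref{theprop-simple}, so they play no role here; moreover Remark \ref{ex-am} is stated for genuine, not partial, actions, so invoking it in this generality is shaky anyway. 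Your worry about unitality is also unfounded: since $\G$ is discrete and $X$ is compact, $\B=\ell^1_\alpha(\G,C(X))$ is unital even for a partial action (the unit fiber of the associated Fell bundle is all of $C(X)$, so $1\,\delta_{\e}$ is a unit). Worse, your proposed fallback of passing to $\widetilde{\B}$ would be fatal rather than harmless: if $\B$ were non-unital, then $\B$ would be a closed two-sided ideal of codimension one in $\widetilde{\B}$, so hypothesis \emph{(iii)} of Corollary \ref{theprop-simple} could never hold for $\widetilde{\B}$.

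The genuine gap is the symmetry of $\B$, a standing hypothesis of Theorem \ref{main3} which is also what \cite[Theorem 5.14]{Fl24} needs in order to yield the Wiener property (W); both of your arguments for it fail. First, an infinite locally finite group need not be abelian-by-finite: the group of finitary permutations of $\N$ is locally finite, and any finite-index subgroup of it contains the (infinite, simple, nonabelian) finitary alternating group. Second, Gromov's theorem gives only that a finitely generated group of polynomial growth is \emph{virtually} nilpotent, while Remark \ref{ex-sym} covers nilpotent or compact groups; an infinite virtually nilpotent discrete group is neither, and nothing in this paper shows that symmetry of $\ell^1(\G\,\vert\,\mathscr C)$ for every Fell bundle passes to finite extensions (or to directed unions, which is what the locally finite case would require). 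Symmetry also does not transfer automatically from a dense $^*$-subalgebra to the ambient algebra, so Lemma \ref{gendiff1} cannot be used to close the gap. What is needed is an input from the literature on rigidly symmetric/hypersymmetric discrete groups (the circle of results around \cite{FJM}), to the effect that the class of discrete groups for which all algebras $\ell^1(\G\,\vert\,\mathscr C)$ are symmetric contains locally finite groups and is closed under finite extensions; to be fair, the paper states this corollary without proof and Remark \ref{ex-sym} as written does not cover these groups either, so the same point is left implicit there. As your proposal stands, the symmetry hypothesis --- and with it the applicability of Theorem \ref{main3} --- is unestablished.
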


We finalize this section with conditions that relate the automatic continuity properties of $L^1(\G\,\vert\,\mathscr C)$ to its $C^*$-completion. It is proven directly by applying Theorem \ref{cstar}.

\begin{thm}\label{cstar-cor}
    Let $\G$ be a locally compact group that admits a polynomial weight $\nu$, such that $\nu^{-1}\in L^p(\G)$. Suppose that the algebra $\B=L^1(\G\,\vert\,\mathscr C)$ is symmetric and that every closed two-sided ideal $I$ of $\B$ has a bounded left approximate identity. Then the following are equivalent for every homomorphism $\theta:C^*(\G\,\vert\,\mathscr C)\to \A$, where $\A$ is another Banach algebra.\begin{enumerate}
        \item[(a)] $\theta$ is continuous.
        \item[(b)] $\theta|_\B$ is continuous.
        \item[(c)] $\mathscr I(\theta)\cap \B$ is closed in $\B$. 
    \end{enumerate}
\end{thm}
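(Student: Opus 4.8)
The plan is to verify that the hypotheses of Theorem \ref{cstar} are met by $\B=L^1(\G\,\vert\,\mathscr C)$ under the stated assumptions, and then simply invoke it with $C^*(\B)=C^*(\G\,\vert\,\mathscr C)$. First I would observe that Theorem \ref{cstar} requires $\B$ to be a symmetric, locally regular $A^*$-algebra satisfying conditions \emph{(i)} and \emph{(ii)} of that theorem. Symmetry is assumed directly. That $\B$ is an $A^*$-algebra follows from the existence of its $C^*$-completion $C^*(\G\,\vert\,\mathscr C)$ via the left regular (or universal) representation, so the $C^*$-norm exists. The local regularity is the substantive point: by Proposition \ref{regularsub}, for every $\Phi=\Phi^*\in\mathfrak E$ the algebra $\mathfrak E(\Phi)$ is a regular Banach function algebra, and since $\mathfrak E$ is a dense $^*$-subalgebra of $\B$ with $\mathfrak E_{\rm sa}$ dense in $\B_{\rm sa}$, the set $R=\mathfrak E_{\rm sa}$ witnesses local regularity of $\B$.

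Next I would check the two enumerated conditions of Theorem \ref{cstar}. Condition \emph{(i)}, that every finite-codimensional closed two-sided ideal has a bounded left approximate identity, is implied by the stronger assumption in the present theorem that \emph{every} closed two-sided ideal has a bounded left approximate identity. Condition \emph{(ii)}, that every closed finite subset $F\subset{\rm Prim}_*\B$ consisting of finite-codimensional ideals is a set of synthesis, is precisely the content of Theorem \ref{synth}, whose hypotheses---that $\G$ has polynomial growth, that $\B$ is symmetric, and that every closed two-sided ideal has a bounded left approximate identity---are all in force here. (Polynomial growth of $\G$ is available because the admissibility of a polynomial weight $\nu$ with $\nu^{-1}\in L^p(\G)$ forces $\G$ to have polynomial growth.) With all hypotheses of Theorem \ref{cstar} verified, the equivalence of \emph{(a)}, \emph{(b)} and \emph{(c)} for any homomorphism $\theta:C^*(\G\,\vert\,\mathscr C)\to\A$ is immediate.

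The main obstacle, such as it is, lies not in any single hard estimate but in making sure the abstract conditions of Theorem \ref{cstar} line up cleanly with the concrete harmonic-analytic hypotheses of Theorem \ref{cstar-cor}; in particular one must confirm that the dense regular subalgebra $\mathfrak E$ really does deliver \emph{local regularity} in the precise sense of the definition (a dense subset of $\B_{\rm sa}$, not merely of some subalgebra, each of whose generated algebras is regular), and that the continuity ideal $\mathscr I(\theta)$ appearing in condition \emph{(c)} is the one relevant to the restriction $\theta|_\B$. Both points are settled by the density of $\mathfrak E$ in $\B$ together with the continuity of the inclusion, so the proof reduces to the single sentence that all hypotheses of Theorem \ref{cstar} hold, as just checked.
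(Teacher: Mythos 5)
Your proposal is correct and takes essentially the same approach as the paper: the paper's entire proof of Theorem \ref{cstar-cor} is the single remark that it ``is proven directly by applying Theorem \ref{cstar},'' and your verification of that theorem's hypotheses (symmetry by assumption, local regularity via Proposition \ref{regularsub} exactly as the paper itself does in Corollary \ref{noncont}, condition \emph{(i)} from the stronger standing assumption on ideals, and condition \emph{(ii)} from Theorem \ref{synth}, with polynomial growth correctly deduced from the existence of the weight $\nu$ with $\nu^{-1}\in L^p(\G)$) is precisely the verification the paper leaves implicit.
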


\begin{rem}
    Theorem \ref{cstar-cor} applies, of course, to all the algebras mentioned in Corollary \ref{segundo}.
\end{rem}

\section{Appendix: On finite-codimensional ideals in twisted group algebras}\label{COMPL}

On this appendix we will show that finite codimensional, closed two-sided ideals in a twisted group algebra $L_\omega^1(\G)$ have bounded left-approximate identities as soon has $\G$ is amenable. This helps us bypass the assumptions of amenability previously made. To my best knowledge, it is not known if these algebras are amenable. The technique used here was introduced in \cite{LvJ73}.

From now on, fix a locally compact group $\G$ (not necessarily unimodular) and a complex-valued $2$-cocycle, in the sense of Example \ref{mainex}. Let $\Phi\in L^1_\omega(\G)$ and $y\in \G$. The formulas \begin{align}
    U_y \Phi(x)=\Phi(y^{-1}x)\omega(y,y^{-1}x),&\quad  L_y\Phi(x)=\Phi(y^{-1}x) \\
    V_y \Phi(x)=\Phi(xy^{-1})\omega(xy^{-1},y)\Delta(y^{-1}),&\quad  R_y\Phi(x)=\Phi(xy)\Delta(y)
\end{align} define isometries in $\mathbb B(L^1_\omega(\G))$ such that \begin{equation}
    \Phi*\Psi=\int_\G \Phi(y) U_y\Psi\,\d y=\int_\G \Psi(y) V_y\Phi \,\d y
\end{equation} Furthermore, we have the relations \begin{align}\label{formulas}
    U_xU_y=\omega(x,y)U_{xy},&\qquad U_y(\Phi*\Psi)=U_y(\Phi)*\Psi \\
    V_yV_x=\omega(x,y)V_{xy}\quad&\text{and}\quad V_y(\Phi*\Psi)=\Phi*V_y(\Psi)
\end{align} for all $x,y\in \G$ and $\Phi,\Psi\in L^1_\omega(\G)$. In particular, the first identity implies that \begin{equation}
    \omega(x,x^{-1})=\omega(x^{-1},x)\quad\text{ and }\quad U_y^{-1}=\overline{\omega(x,x^{-1})}U_{x^{-1}}.
\end{equation} An immediate consequence is the following lemma.

\begin{lem}\label{multiplier}
    Let $I\subset L^1_\omega(\G)$ be a closed left (resp. right) ideal. Then $U_y I\subset I$ (resp. $V_y I\subset I$), for all $y\in \G$.
\end{lem}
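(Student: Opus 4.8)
The plan is to prove Lemma \ref{multiplier} by showing that $U_y$ maps the closed left ideal $I$ into itself, with the right-ideal case following by the symmetric argument using $V_y$. The key observation is the second relation in \eqref{formulas}, namely $U_y(\Phi*\Psi)=U_y(\Phi)*\Psi$, which exhibits $U_y$ as a \emph{left} multiplier of the algebra $L^1_\omega(\G)$. My first step would be to recall that $I$ is a closed left ideal, so it is closed under left multiplication by arbitrary elements of $L^1_\omega(\G)$ and under norm limits. I would then reduce the problem to showing that $U_y\Phi\in I$ for a dense set of $\Phi\in I$ and use continuity (each $U_y$ is an isometry, hence bounded) to extend to all of $I$.

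The central idea is to approximate $\Phi$ by products lying in $I$. Since $L^1_\omega(\G)$ has a bounded approximate identity $\{\e_\lambda\}$ (the standard one coming from approximate identities on $\G$), for any $\Phi\in I$ we have $\e_\lambda*\Phi\to\Phi$ in norm. But $\Phi\in I$ and $I$ is a left ideal, so this particular factorization is not quite what I want; instead I would write $\Phi=\lim_\lambda \e_\lambda*\Phi$ and apply $U_y$ to obtain
\begin{equation*}
U_y\Phi=U_y\big(\lim_\lambda \e_\lambda*\Phi\big)=\lim_\lambda U_y(\e_\lambda*\Phi)=\lim_\lambda U_y(\e_\lambda)*\Phi,
\end{equation*}
where the second equality uses continuity of $U_y$ and the third uses the multiplier relation $U_y(\e_\lambda*\Phi)=U_y(\e_\lambda)*\Phi$. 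Now each term $U_y(\e_\lambda)*\Phi$ lies in $I$, because $\Phi\in I$ and $I$ is a \emph{left} ideal (so it absorbs left multiplication by $U_y(\e_\lambda)\in L^1_\omega(\G)$). Since $I$ is norm-closed, the limit $U_y\Phi$ lies in $I$ as well, giving $U_y I\subset I$.

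The right-ideal case is entirely parallel: the relation $V_y(\Phi*\Psi)=\Phi*V_y(\Psi)$ shows $V_y$ is a \emph{right} multiplier, and running the same approximation with $\Phi*\e_\lambda\to\Phi$ yields $V_y\Phi=\lim_\lambda \Phi*V_y(\e_\lambda)\in I$ whenever $I$ is a right ideal. The main point of care is matching the sidedness correctly: the multiplier property of $U_y$ is compatible with left ideals and that of $V_y$ with right ideals, and swapping them would break the argument. I expect the only genuine subtlety to be verifying that $U_y$ and $V_y$ are bounded (indeed isometric) on $L^1_\omega(\G)$ so that one may pass the limit through, but this is already recorded in the preceding display where these operators are declared to be isometries in $\mathbb B(L^1_\omega(\G))$; hence the argument is short and the lemma follows immediately from the multiplier identities in \eqref{formulas}.
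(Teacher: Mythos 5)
Your proposal is correct and is essentially identical to the paper's own proof: both apply $U_y$ to the approximation $\Phi=\lim \e_\lambda*\Phi$, use the multiplier identity $U_y(\e_\lambda*\Phi)=U_y(\e_\lambda)*\Phi$ together with the left-ideal property to see each term lies in $I$, and conclude by closedness of $I$ and boundedness of $U_y$. The preliminary remark about reducing to a dense subset of $I$ is unnecessary (the argument works directly for every $\Phi\in I$), but this does not affect correctness.
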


\begin{proof}
    Let $\Psi_\gamma \in L^1_\omega(\G)$ be a bounded approximate identity. Then for all $\Phi\in I$, we have $$I \ni U_y(\Psi_\gamma)*\Phi=U_y(\Psi_\gamma *\Phi)\to U_y\Phi$$ and hence $U_y\Phi\in I$. The case of right ideals is analogous. 
\end{proof}

In any case, we want to take advantage of the formulas in \eqref{formulas} to perturbe other operators in $\mathbb B(L^1_\omega(\G))$ and that is accomplished by the following lemma.

\begin{lem}\label{comm}
    For every $T\in \mathbb B(L^1_\omega(\G))$ and $\Psi\in L^1_\omega(\G)$, the formula \begin{equation}
        T_\Phi\Psi=\int_\G \Phi(y) U_y T U_{y}^{-1}\Psi\,\d y
    \end{equation} defines a bounded operator $T_\Phi\in \mathbb B(L^1_\omega(\G))$, of norm at most $\norm{\Phi}_{L^1_\omega(\G)}\norm{T}_{\mathbb B(L^1_\omega(\G))}$ and that satisfies \begin{equation}\label{idk}
        \norm{\Phi_1*T_\Psi\Phi_2-T_\Psi(\Phi_1*\Phi_2)}_{L^1_\omega(\G)}\leq \norm{\Phi_2}_{L^1_\omega(\G)}\norm{T}_{\mathbb B(L^1_\omega(\G))}\int_\G |\Phi_1(y)| \norm{\Psi-L_{y^{-1}}\Psi}_{L^1_\omega(\G)}\,\d y.
    \end{equation} Moreover, if $\G$ is amenable, then for all $\Phi_1,\ldots, \Phi_N\in L^1_\omega(\G)$ and $\epsilon>0$, there exists a positive funciton $\Psi \in L^1_\omega(\G)$ of integral $1$, such that for all $n=1,\ldots, N$, \begin{equation}\label{idk2}
        \norm{\Phi_n*T_\Psi\Phi_0-T_\Psi(\Phi_n*\Phi_0)}_{L^1_\omega(\G)}\leq \epsilon\norm{\Phi_0}_{L^1_\omega(\G)}\norm{T}_{\mathbb B(L^1_\omega(\G))}
    \end{equation} for all $T\in \mathbb B(L^1_\omega(\G))$ and $\Phi_0 \in L^1_\omega(\G)$. \end{lem}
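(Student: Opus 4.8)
The plan is to prove the three assertions in Lemma \ref{comm} in order: boundedness of $T_\Phi$, the commutator estimate \eqref{idk}, and finally the amenability-based uniform estimate \eqref{idk2}. For boundedness, I would observe that the $U_y$ are isometries on $L^1_\omega(\G)$, so $\norm{U_y T U_y^{-1}\Psi}_{L^1_\omega(\G)}\leq \norm{T}_{\mathbb B(L^1_\omega(\G))}\norm{\Psi}_{L^1_\omega(\G)}$ pointwise in $y$, and then the integral defining $T_\Phi\Psi$ is dominated by $\int_\G |\Phi(y)|\,\norm{T}\norm{\Psi}\,\d y = \norm{\Phi}_{L^1_\omega(\G)}\norm{T}\norm{\Psi}$, giving the desired operator-norm bound. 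This is a routine application of the triangle inequality for Bochner integrals together with the fact that each $U_y$ is an isometry.

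For the commutator estimate \eqref{idk}, the key is to use the relation $U_y(\Phi*\Psi)=U_y(\Phi)*\Psi$ from \eqref{formulas}, which says left translation by $U_y$ is a left multiplier. I would write out both $\Phi_1*T_\Psi\Phi_2$ and $T_\Psi(\Phi_1*\Phi_2)$ as iterated integrals over $\G$, using $\Phi_1*\Xi=\int_\G\Phi_1(y)U_y\Xi\,\d y$, and then try to commute the convolution by $\Phi_1$ past the averaging operator $T_\Psi$. The mismatch between the two expressions should reduce, after manipulating the integration variables and invoking the covariance relations, to a term measuring the difference between $\Psi$ and its translate $L_{y^{-1}}\Psi$; the factor $\norm{T}$ comes from the operator norm bound and $\norm{\Phi_2}_{L^1_\omega(\G)}$ from pulling $\Phi_2$ out. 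The careful bookkeeping here — tracking cocycle factors $\omega$ and the change of variables that turns the discrepancy into $\norm{\Psi-L_{y^{-1}}\Psi}_{L^1_\omega(\G)}$ — is where most of the work lies, but it is essentially algebraic once the right substitution is identified.

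For the final claim, I would invoke amenability of $\G$ in the form of a Reiter-type condition: for any finite collection $\Phi_1,\ldots,\Phi_N$ (whose essential supports are contained in compact sets) and any $\epsilon>0$, there is a positive $\Psi\in L^1_\omega(\G)$ of integral $1$ such that $\norm{\Psi-L_{y^{-1}}\Psi}_{L^1_\omega(\G)}$ is uniformly small over the relevant $y$. Feeding such a $\Psi$ into the estimate \eqref{idk}, the integral $\int_\G|\Phi_n(y)|\,\norm{\Psi-L_{y^{-1}}\Psi}_{L^1_\omega(\G)}\,\d y$ becomes at most $\epsilon$ (after absorbing $\norm{\Phi_n}_{L^1_\omega(\G)}$ into the choice), yielding \eqref{idk2} uniformly in $T$ and $\Phi_0$.

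The main obstacle, I expect, is twofold. First, in \eqref{idk2} the estimate must hold \emph{simultaneously} for all $N$ functions and \emph{uniformly} over all $T$ and all $\Phi_0$; this forces the Reiter net to be chosen to almost-invariant against the combined supports of $\Phi_1,\ldots,\Phi_N$, which is exactly where amenability (rather than a single Følner set) is indispensable, and the weight $\nu$ implicit in $L^1_\omega(\G)$ must be handled since translation need not be isometric for a weighted norm. Second, in deriving \eqref{idk} one must be scrupulous about the non-unimodular Haar modulus $\Delta$ and the $2$-cocycle $\omega$ appearing in the definitions of $U_y$ and $V_y$; getting the translate to appear as $L_{y^{-1}}\Psi$ (the \emph{untwisted} left translation, as written) rather than a twisted version is the delicate point, and it is precisely the relation $U_y(\Phi*\Psi)=U_y(\Phi)*\Psi$ that absorbs the cocycle discrepancies and leaves a clean untwisted translate.
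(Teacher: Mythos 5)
Your proposal follows essentially the same route as the paper's proof: boundedness from the isometry of the $U_y$; the estimate \eqref{idk} by writing both terms via $\Phi_1*\Xi=\int_\G\Phi_1(y)U_y\Xi\,\d y$, reducing to the commutators $U_yT_\Psi-T_\Psi U_y$, and showing by a change of variables and the cocycle relations that $T_\Psi U_y=U_yT_{L_{y^{-1}}\Psi}$ (so the discrepancy is controlled by $\norm{T}\,\norm{\Phi_2}_{L^1_\omega(\G)}\norm{\Psi-L_{y^{-1}}\Psi}_{L^1_\omega(\G)}$); and a Reiter-type almost-invariance argument for \eqref{idk2}, which is exactly how the paper concludes (it simply defers this step to \cite[Lemma 1]{LvJ73}). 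One clarification: your anticipated obstacle about a ``weight $\nu$ implicit in $L^1_\omega(\G)$'' is not actually present --- the subscript $\omega$ denotes the $2$-cocycle, which twists only the product and involution while the norm remains the ordinary $L^1$-norm, so left translation is isometric, the modular function $\Delta$ never enters (only the left operators $U_y$, $L_y$ occur in the argument), and the standard Reiter property applies verbatim.
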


\begin{proof}
    Since $U_y$ and $U_y^{-1}$ are isometries, $\norm{U_y}_{ \mathbb B(L^1_\omega(\G))}=\norm{U_y^{-1}}_{ \mathbb B(L^1_\omega(\G))}=1$ and $$\norm{T_\Phi}_{ \mathbb B(L^1_\omega(\G))}\leq \norm{\Phi}_{L^1_\omega(\G)}\norm{T}_{\mathbb B(L^1_\omega(\G))}$$ follows easily. In order to continue with the proof, let us note that \begin{align*}
        T_\Psi U_y \Phi_2&=\int_\G \Psi(x) U_x TU_{x}^{-1}U_y\Phi_2\,\d x \\
        &=\int_\G \overline{\omega(x,x^{-1})}\omega(x^{-1},y)\Psi(x) U_x TU_{x^{-1}y}\Phi_2\,\d x \\
        &= \int_\G \overline{\omega(yz,z^{-1}y^{-1})}\Psi(yz)U_{yz}TU_{z^{-1}}\Phi_2\,\d z \\
        &= U_y\int_\G \overline{\omega(yz,z^{-1})}\,\overline{\omega(y,z)}\omega(z,z^{-1})\Psi(yz)U_{z}TU_{z}^{-1}\Phi_2\,\d z \\
        &=U_y \int_\G \Psi(yz) U_{z}TU_{z}^{-1}\Phi_2\,\d z =U_y T_{L_{y^{-1}}\Psi}\Phi_2
    \end{align*} and hence \begin{align*}
        \Phi_1*T_\Psi\Phi_2-T_\Psi(\Phi_1*\Phi_2)&=\int_G \Phi_1(y) U_yT_\Psi\Phi_2\,\d y -  T_\Psi\big(\int_G \Phi_1(y) U_y\Phi_2\,\d y\big)   \\
        &=\int_\G \Phi_1(y)\big(U_yT_\Psi- T_\Psi U_y\big) \Phi_2\,\d y \\
        &=\int_\G \Phi_1(y) U_y \big(T_\Psi-T_{L_{y^{-1}}\Psi})\Phi_2\,\d y
    \end{align*} so the inequality \eqref{idk} follows. The rest of the proof follows as in \cite[Lemma 1]{LvJ73}.
\end{proof}

\begin{thm}\label{twistedgp}
    Let $\G$ be an amenable locally compact group. Let $I$ be a closed left (resp. right) ideal of $L^1_\omega(\G)$ such that there is a continuous projection of $L^1_\omega(\G)$ onto $I$. Then $I$ contains a bounded right (resp. left) approximate identity of norm at most $\norm{P}_{\mathbb B(L^1_\omega(\G))}$.
\end{thm}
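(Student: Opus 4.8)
The plan is to realize the approximate identity as a net indexed by the finite subsets $F\subset I$ paired with a tolerance $\epsilon>0$, directed by inclusion and decreasing tolerance. It therefore suffices to produce, for each such pair $(F,\epsilon)$, a single element $e\in I$ with $\norm{e}_{L^1_\omega(\G)}\leq\norm{P}_{\mathbb B(L^1_\omega(\G))}$ and $\norm{\Phi*e-\Phi}_{L^1_\omega(\G)}<\epsilon$ for every $\Phi\in F$; the resulting net is then a bounded right approximate identity whose bound is at most $\norm{P}_{\mathbb B(L^1_\omega(\G))}$. The case of a closed right ideal is entirely symmetric: one replaces $U_y$ by $V_y$ throughout, using $V_y(\Phi*\Psi)=\Phi*V_y(\Psi)$ from \eqref{formulas} and the right-handed version of Lemma \ref{multiplier}, to obtain instead a bounded left approximate identity.

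The element $e$ would be built from two ingredients. First, $L^1_\omega(\G)$ admits a norm-one approximate identity: positive functions of integral one concentrating at $\e$, for which $\Phi*u\to\Phi$ because $U_y\to\id$ as $y\to\e$. I would fix such a $u$ with $\norm{\Phi*u-\Phi}_{L^1_\omega(\G)}$ as small as needed for all $\Phi\in F$. Second, applying Lemma \ref{comm} to $T=P$ and the finite family $F$ furnishes a positive $\Psi\in L^1_\omega(\G)$ of integral one realizing the averaged commutation estimate \eqref{idk2}. I then set $e:=P_\Psi u$, where $P_\Psi$ denotes the operator $T_\Psi$ of Lemma \ref{comm} with $T=P$.

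Three properties then need checking. For membership, $PU_y^{-1}\xi$ always lies in the range $I$ of $P$, and $U_y I\subset I$ by Lemma \ref{multiplier}, so the integrand defining $P_\Psi u$ takes values in the closed subspace $I$, whence $e\in I$. For the norm bound, Lemma \ref{comm} gives $\norm{P_\Psi}_{\mathbb B(L^1_\omega(\G))}\leq\norm{\Psi}_{L^1_\omega(\G)}\norm{P}_{\mathbb B(L^1_\omega(\G))}=\norm{P}_{\mathbb B(L^1_\omega(\G))}$, and since $\norm{u}_{L^1_\omega(\G)}=1$ we obtain $\norm{e}_{L^1_\omega(\G)}\leq\norm{P}_{\mathbb B(L^1_\omega(\G))}$. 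The crucial observation is that $P_\Psi$ restricts to the identity on $I$: for $\xi\in I$ one has $U_y^{-1}\xi\in I$ (again Lemma \ref{multiplier}), so $PU_y^{-1}\xi=U_y^{-1}\xi$, hence $U_yPU_y^{-1}\xi=\xi$ and $P_\Psi\xi=\xi$ since $\Psi$ has integral one. The approximate identity property then follows from a three-term estimate: for $\Phi\in F$, inequality \eqref{idk2} controls $\norm{\Phi*P_\Psi u-P_\Psi(\Phi*u)}_{L^1_\omega(\G)}$ by $\epsilon'\norm{P}_{\mathbb B(L^1_\omega(\G))}$; boundedness of $P_\Psi$ together with $\Phi*u\approx\Phi$ makes $\norm{P_\Psi(\Phi*u)-P_\Psi\Phi}_{L^1_\omega(\G)}$ small; and $P_\Psi\Phi=\Phi$ because $\Phi\in I$. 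Summing and choosing $\epsilon'$ and the quality of $u$ appropriately yields $\norm{\Phi*e-\Phi}_{L^1_\omega(\G)}<\epsilon$.

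The genuine difficulty—the amenability-driven simultaneous averaging producing a single $\Psi$ that works for all of $F$ and all $T$—has already been isolated in Lemma \ref{comm} (inequality \eqref{idk2}, following \cite{LvJ73}). Consequently the remaining points are structural rather than analytic: that $P_\Psi$ both maps into $I$ and acts as the identity on $I$, both of which rest on the $U_y$-invariance of $I$ from Lemma \ref{multiplier}, and that the norm bound is kept exactly at $\norm{P}_{\mathbb B(L^1_\omega(\G))}$ by taking $u$ and $\Psi$ of norm one. The main obstacle I would anticipate is ensuring the norm never drifts above $\norm{P}_{\mathbb B(L^1_\omega(\G))}$ while still achieving the approximation, which is precisely why the two normalizations are arranged simultaneously rather than absorbing constants into the tolerance.
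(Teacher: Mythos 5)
Your proposal is correct and follows essentially the same route as the paper's proof: averaging the projection $P$ via Lemma \ref{comm} to obtain $Q=P_\Psi$, observing that $U_yPU_y^{-1}$ (hence $P_\Psi$) is again a projection onto $I$ thanks to Lemma \ref{multiplier}, and then combining \eqref{idk2} with a norm-one element $\Phi_0$ (your $u$) approximating the identity on the finite set, so that $Q\Phi_0\in I$ serves as the approximate unit with norm at most $\norm{P}_{\mathbb B(L^1_\omega(\G))}$. The only cosmetic difference is that you split the final estimate into three terms where the paper merges the last two using $Q\Phi_n=\Phi_n$; the content is identical.
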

\begin{proof}
    We focus on the case of left ideals, the case of right ideals is analogous. Given $y\in\G$, we note that $U_y T U_{y}^{-1}$ maps $L^1_\omega(\G)$ onto $I$ (Lemma \ref{multiplier}) and hence $U_y T U_{y}^{-1}\Phi=\Phi$, for all $\Phi\in I$. Hence $P_\Psi$ is a projection of $L^1_\omega(\G)$ onto $I$ if $\int_G \Psi(x)\,\d x=1$. 
    
    Because of \eqref{idk2}, there exists a projection $Q=P_\Psi$ with norm at most $\norm{P}_{\mathbb B(L^1_\omega(\G))}$ and such that for all $\Phi_1,\ldots, \Phi_N\in L^1_\omega(\G)$ and $\epsilon>0$, $$\norm{\Phi_n*Q\,\Phi_0-Q(\Phi_n*\Phi_0)}_{L^1_\omega(\G)}\leq \epsilon\norm{\Phi_0}_{L^1_\omega(\G)}.$$ We now choose $\Phi_0$ of norm $1$ and such that $\norm{\Phi_n*\Phi_0-\Phi_n}_{L^1_\omega(\G)}\leq \epsilon/\norm{Q}_{\mathbb B(L^1_\omega(\G))}$ for each $n=1,\ldots, N$. Then \begin{align*}
        \norm{\Phi_n*Q\,\Phi_0-\Phi_n}_{L^1_\omega(\G)}&\leq \norm{\Phi_n*Q\,\Phi_0-Q(\Phi_n*\Phi_0)}_{L^1_\omega(\G)}+\norm{Q(\Phi_n*\Phi_0)-\Phi_n}_{L^1_\omega(\G)} \\
        &\leq  2\epsilon.
    \end{align*} Finishing the proof.
\end{proof}

\section*{Acknowledgements}

The author has been partially supported by the NSF grant DMS-2000105. He is also grateful to professors Ben Hayes and Hannes Thiel for their helpful comments.

\printbibliography

\bigskip
\bigskip
ADDRESS

\smallskip
Felipe I. Flores

Department of Mathematics, University of Virginia,

114 Kerchof Hall. 141 Cabell Dr,

Charlottesville, Virginia, United States

E-mail: hmy3tf@virginia.edu

\end{document}